\renewcommand{\left}{\mleft}
\renewcommand{\right}{\mright}
\definecolor{thmback}{rgb}{1,1,1} 
\definecolor{lightback}{rgb}{1,1,1} 
\declaretheoremstyle[
	spaceabove=7pt, spacebelow=7pt,
	headfont=\normalfont\itshape,
	notefont=\mdseries, notebraces={(}{)},
	bodyfont=\normalfont,
	postheadspace=1em
]{rem}
\declaretheoremstyle[
	spaceabove=7pt, spacebelow=7pt,
	headfont=\normalfont\bfseries,
	notefont=\mdseries, notebraces={}{},
	bodyfont=\normalfont,
	postheadspace=1em,
	shaded={bgcolor=lightback,padding=2mm,textwidth=0.98\textwidth},
]{prop}
\declaretheoremstyle[
	spaceabove=20pt, spacebelow=7pt,
	headfont=\normalfont\bfseries,
	notefont=\mdseries, notebraces={(}{)},
	bodyfont=\normalfont,
	postheadspace=1em,
	shaded={bgcolor=thmback,padding=2mm,textwidth=0.98\textwidth},
]{forsty}
\declaretheoremstyle[
	spaceabove=7pt, spacebelow=7pt,
	headfont=\normalfont\bfseries,
	notefont=\normalfont\bfseries, 
	notebraces={}{},
	bodyfont=\normalfont,
	postheadspace=0.5em,
	headpunct={:}
]{def}
\newlength{\nameadjust}
\declaretheoremstyle[
	spaceabove=7pt, spacebelow=7pt,
	headfont=\normalfont\bfseries,
	notefont=\normalfont\bfseries,  
	notebraces={}{},
	bodyfont=\normalfont\itshape,
	postheadspace=1em,
	shaded={bgcolor=thmback,padding=2mm,textwidth=0.98\textwidth},
	headpunct={:},
]{thmsty}
\declaretheoremstyle[
	spaceabove=7pt, spacebelow=7pt,
	headfont=\normalfont\bfseries,
	notefont=\mdseries, notebraces={(}{)},
	bodyfont=\normalfont\itshape,
	postheadspace=1em,
	shaded={bgcolor=thmback,padding=2mm,textwidth=0.98\textwidth},
]{mainthmsty}
	\declaretheorem[style=thmsty, numberwithin=section,name=Theorem]{theorem}
	\declaretheorem[style=thmsty, numberlike=theorem,name=Lemma]{lemma}
	\declaretheorem[style=prop, numberlike=theorem,name=Proposition]{proposition}
	\declaretheorem[style=thmsty, numberlike=theorem,name=Corollary]{corollary}
	\declaretheorem[style=def, numberlike=theorem, name=Definition]{definition}
	\declaretheorem[style=def, numbered=no, name=Question]{question}
	\declaretheorem[style=rem, numbered=no, name=Remark]{remark}
\newlength{\hatchspread}
\newlength{\hatchthickness}
\newlength{\hatchshift}
\newcommand{\hatchcolor}{}
\tikzset{hatchspread/.code={\setlength{\hatchspread}{#1}},
         hatchthickness/.code={\setlength{\hatchthickness}{#1}},
         hatchshift/.code={\setlength{\hatchshift}{#1}},
         hatchcolor/.code={\renewcommand{\hatchcolor}{#1}}}
\tikzset{hatchspread=3pt,
         hatchthickness=0.4pt,
         hatchshift=0pt,
         hatchcolor=black}
\pgfqpoint{\starspread}{\starspread}}
\pgfqpoint{\starspread}{\starspread}}
\newcommand{\supp}{\ensuremath{\operatorname{supp}}}
\newcommand{\defeq}{\mathrel{\vcentcolon =}}
\newcommand{\eps}{\ensuremath{\epsilon}}
\newcommand{\N}{\ensuremath{\mathds N}}	
\renewcommand{\H}{\Hil}
\newcommand{\Cpx}{\ensuremath{\mathds C}}
\newcommand{\B}{\ensuremath{\mathcal B}} 
\newcommand{\Cs}{\ensuremath{C^{\ast}}}
\newcommand{\Csalgebra}{\Cs-algebra}
\renewcommand{\star}{\ensuremath{^\ast}}
\newcommand{\starhomomorphism}{\star-homomorphism}
\newcommand{\starisomorphism}{\star-isomorphism}
\newcommand{\weakstar}{weak$^\ast$}
\renewcommand{\phi}{\ensuremath{\varphi}} 
\newcommand{\1}{\ensuremath{\mathds{1}}} 
\renewcommand{\ref}[1]{\textup{\ref{#1}}}
\newcommand{\etale}{\text{étale}}
\newcommand{\Csr}{\ensuremath{\Cs_{\text{r}}}} 
\newcommand{\A}{\ensuremath{\mathcal A}}
\renewcommand{\B}{\ensuremath{\mathcal B}}
\newcommand{\Ind}{\ensuremath{\mathop{\textup{Ind}}}}
\newcommand{\Calgebra}{\ensuremath{C_0( \Gzero )}\text{-algebra}}
\newcommand{\G}{\ensuremath{\mathcal G}}
\renewcommand{\H}{\ensuremath{\mathcal H}}
\newcommand{\Gzero}{\ensuremath{\G^{\left( 0 \right)}}}
\newcommand{\Hom}{\ensuremath{\textup{Hom}}}
\newcommand{\lift}{\ensuremath{\textup{rInd}\,}}
\newcommand{\res}{\ensuremath{\textup{res}\,}}
\newcommand{\intermediaryAlgebra}{\ensuremath{\Cs\left( \tilde \pi\left( C\left( \tilde X \right) \right) \right)}}
\renewcommand{\intermediaryAlgebra}{\ensuremath{E}}
\newcommand{\Galgebra}{\G-algebra}
\newcommand{\Halgebra}{\H-algebra}
\newcommand{\Gzeroalgebra}{\Gzero-algebra}
\title{The Furstenberg Boundary of a Groupoid}
\date{\empty}
\author{Clemens Borys
\thanks{Supported by a grant from the Danish Council for Independent Research, Natural Sciences.
  Part of this work was carried out 
  while visiting the Institute for Pure and Applied Mathematics (IPAM), 
  which is supported by the National Science Foundation.
}}
\begin{document}

\pagenumbering{arabic}
\setcounter{page}{1}

\maketitle

\begin{abstract}
  We define the Furstenberg boundary of a locally compact Hausdorff \etale{} groupoid,
  generalising the Furstenberg boundary for discrete groups,
  by providing a construction of a groupoid-equivariant injective envelope.
  Using this injective envelope, we establish the absence of recurrent amenable
  subgroups in the isotropy as a sufficient criterion for the intersection
  property of a locally compact Hausdorff \etale{} groupoid with compact unit space and
  no fixed points. This yields a criterion for {$C^*$}-simplicity of minimal
  groupoids.
\end{abstract}

\section{Introduction}
Introduced by Hamana \cite{HamanaCstar, HamanaOperatorSystems, HamanaDynamical} in the 1980s, 
injective envelopes in dynamical categories of operator systems
have recently gained in popularity,
after Kalantar and Kennedy \cite{KennedyBoundaries} 
identified the Furstenberg boundary $\partial_F\Gamma$ of a discrete group $\Gamma$
with the spectrum of the $\Gamma$-equivariant injective envelope
of the trivial $\Gamma$-algebra $\Cpx$.
A series of breakthrough results
of Breuillard, Kalantar, Kennedy, and Ozawa \cite{KennedyTrace}
and Kennedy \cite{KennedyIntrinsic}
then lead to the solution of the unique trace and \Cs-simplicity problems for discrete groups
via the characterisation of their actions on the boundary.
Consequently, several authors set forth to generalise Hamana's construction
to obtain Furstenberg-type boundaries in different settings,
in the hope of providing similarly powerful tools, 
compare for example Bearden--Kalantar \cite{KalantarUnitaryBoundary}
and Monod \cite{MonodFurstenberg}.

While the works above remain in the realm of groups, 
we aim to provide an injective envelope construction
that generalises the Furstenberg boundary 
to locally compact Hausdorff \etale{} groupoids with compact unit space.
Such groupoids may themselves be thought of as a generalisation 
of discrete groups acting on topological spaces,
where the group is allowed to ``vary'' over the space.

We introduce an induction functor in Section~\ref{sec:induction},
allowing us to transport injective operator systems 
to the category of operator systems with groupoid action,
and enabling us to construct the boundary of a given groupoid in said category 
in Section~\ref{sec:envelope}.
Consequently, we examine some properties of the boundaries 
thus defined in Section~\ref{sec:properties}
and finally apply them to obtain a sufficient condition 
for \Cs{}-simplicity of an \etale{} groupoid
in Section~\ref{sec:simplicity}.

The author is grateful to his supervisors M.\ Musat and M.\ Rørdam
for their continued help and support,
as well as to R.\ Meyer 
for some enlightening conversations.

\textbf{Remarks.} Shortly after we published our first pre-print,
Naghavi \cite{naghaviMinimalActions} obtained a dynamic description of Kawabe's boundaries
for actions of countable, discrete groups on minimal spaces.
Furthermore, she gave an explicit characterisation of the boundary in the case where the space acted upon is finite.
We remark that her boundary coincides with ours in the case of transformation groupoids and moreover any minimal Hausdorff \etale{} groupoids with finite (or, more generally, discrete) unit space
is automatically a transformation groupoid.

\section{Preliminaries}\label{sec:preliminaries}
We review some terminology related to 
\etale{} groupoids and their \Cs-algebras.
For more details see Sims' introductory notes \cite{SimsNotes}.

For a groupoid $\G$ we let $\Gzero$ denote its unit space,
and for every unit $u\in\Gzero$ we let
$\G^u \defeq r^{-1}(u)$ and
$\G_u \defeq s^{-1}(u)$ denote its range and source fibres
consisting respectively of all arrows ending or starting at $u$.
Likewise, let $\G_u^v \defeq \G_u \cap \G^v$ denote the set of all arrows 
with fixed source $u$ and range $v$.
The special case $\G_u^u$ is called the \emph{isotropy} at $u$.
We call $\G$ \emph{principal} if the only isotropy elements are the units themselves,
that is if $\G_u^u = \{u\}$ for all units $u$.

A Hausdorff topological groupoid is \emph{\etale{}} if the range map is a local homeomorphism,
in which case all fibres $\G_u$ and $\G^u$ are discrete in the subspace topology.
As such, \etale{} groupoids amongst topological groupoids 
generalise discrete groups amongst topological groups.
If $\G$ is \etale{} and its unit space is a totally disconnected space, 
we call the groupoid \emph{ample}.
A \emph{bisection} of $\G$ is a subset $B\subseteq \G$ such that
the restrictions of the source and range maps to $\G$ are injective.
In an \etale{} groupoid there is a basis of the topology
consisting of open bisections.
If it is ample, this basis may even be chosen to consist of clopen bisections.
Furthermore an \etale{} groupoid is \emph{topologically principal},
if the set of units at which the isotropy is trivial is dense in its unit space.
Unless otherwise stated, we assume all groupoids to be \etale{} and to have compact unit space.

Just as for groups, 
we can complete the algebra of continuous, compactly supported functions $C_c(\G)$
on an \etale{} groupoid $\G$ to a \Cs-algebra.
The \emph{reduced \Cs-algebra} of the groupoid will be the completion  of $C_c(\G)$
in the norm induced by a regular representation of $\G$.
To simplify notation we may speak of the ``regular representation at unit $u$''
when referring to the representation $\pi_u$ of $C_c(\G)$ on $l^2(\G_u)$ given by
\[
  \pi_u(f) \delta_\gamma 
  = \sum\limits_{\alpha \in \G_{r(\gamma)}} f(\alpha) \delta_{\alpha\gamma}.
\]
Note that the source fibres are discrete and so the sum above is finite.

While the action of a group on a topological space or \Cs-algebra
can be readily defined as a homomorphism into the appropriate automorphism group,
the fibred nature of a groupoid makes its actions much more delicate
and the objects acted upon will in general have to be fibred over its unit space.
Below we will introduce bundles of \Cs-algebras and actions of groupoids on such bundles
following the exposition \cite{GoehleThesis} of Goehle.
Recall that
for a locally compact Hausdorff space $X$,
a \emph{$C_0\left( X \right)$-algebra} is a \Cs-algebra $A$
equipped with a \starhomomorphism{} $\Phi$ from $C_0\left( X \right)$
into the centre of the multiplier algebra of $A$
such that $\Phi$ is non-degenerate in the sense that
$\Phi\left( C_0\left( X \right) \right)A $
is dense in $A$.
For $f\in C_0\left( X \right)$ and $a\in A$
we will often suppress $\Phi$ in the notation
and write $fa$ instead of $\Phi(f)a$.
There is a one-to-one correspondence between $C_0\left( X \right)$-algebras
and upper-semicontinuous bundles of \Cs-algebras over $X$:
see for example Williams \cite[Appendix C]{WilliamsCrossed}.
Therefore, we will not define such bundles separately,
but instead explain how to interpret $C_0\left( X \right)$-algebras as a bundle.

Let $A$ be a $C_0\left( X \right)$-algebra and take any $x\in X$.
Then $C_0\left( X\setminus x \right)$ is an ideal in $C_0\left( X \right)$
and consequently $I_x \defeq \overline{C_0\left( X\setminus x \right) A}$
is a closed, two-sided ideal in $A$.
We denote by $A_x\defeq A/I_x$ the quotient of $A$ by the ideal $I_x$ and assemble these
as fibres into the bundle $\A \defeq \bigsqcup\limits_{x\in X}A_x$.
This bundle then carries a unique topology for which the bundle map $p\colon \A \rightarrow X$
sending any element of $A_x$ to $x$ is continuous and open
and the \Csalgebra{} 
\[
  \Gamma_0\left( X, \A \right)
  =
  \left\{ f\colon X \rightarrow \A \text{ continuous}
    \bigm|
    f(x)\in A_x,
    \|f(x)\|_{A_x} \xrightarrow{x\to \infty} 0
  \right\}
\]
of continuous sections in $\A$ vanishing at infinity
is isomorphic to $A$
by the isomorphism $A \rightarrow \Gamma_0(X, \A)$ 
sending $a\in A$ to the function $x \mapsto a + I_x \in A_x$.
In this way, we may pass from bundles $\A$ to $C_0( X )$-algebras $A$ and back
by forming the algebra of continuous sections
and by assembling the bundle fibrewise.

Let $\tau\colon Y\rightarrow X$ be a continuous map 
between two locally compact Hausdorff spaces
and $A$ a $C_0(X)$-algebra.
Then the \emph{pullback bundle} $Y \ast_\tau \A$ is given by
\[
  Y \ast_\tau \A \defeq
  \left\{ (y,a) \in Y \times \A \bigm|
  \tau(y) = p(a)\right\}
\]
with the relative topology of $Y \times \A$.
It is an upper-semicontinuous bundle over $Y$
with bundle map $q\left( y,a \right) = y$.

We are now ready to define the action of a groupoid on a \Cs-algebra fibred over its unit space.
Let $\G$ be a locally compact Hausdorff \etale{} groupoid
and $A$ a $C_0( \Gzero )$-algebra. 
An \emph{action} $\alpha$ of $\G$ on $A$
is a family of \starisomorphism s $\alpha_\gamma \colon A_{s(\gamma)} \rightarrow A_{r(\gamma)}$
for $\gamma \in \G$
such that $\alpha_{\eta\gamma} = \alpha_{\eta} \circ \alpha_{\gamma}$
whenever $\eta$ and $\gamma$ are composable
and such that the map
\[
  \G \ast_s \A
  \rightarrow \A
  \qquad \text{given by} \qquad
  \left( \gamma, a \right) \mapsto \alpha_\gamma(a)
\]
is continuous.
We then call $A$ a \Galgebra{}.
If there is no confusion about the action, we will abbreviate $\alpha_\gamma(a)$ to $\gamma.a$.

A $C_0( \Gzero )$-map $\phi\colon A \rightarrow B$
between two \Galgebra{}s
with $\G$-actions denoted by $\alpha$ and $\beta$
is called
\emph{$\G$-equivariant}
if the induced maps on the fibres satisfy
\[
  \phi_{r(\gamma)}\left( \alpha_\gamma(a) \right)
  = \beta_\gamma\left( \phi_{s(\gamma)}(a) \right)
\]
for all $\gamma\in \G$ and $a \in A_{s(\gamma)}$.

An \emph{operator system} is a \star-closed subspace of a unital \Cs-algebra 
that contains the unit.
\begin{definition}
  A $\G$-operator system $S$
  is an operator system in a unital \Galgebra{} $A$
  that is closed under both the action of $C_0(\Gzero)$
  and the action of $\G$,
  the latter meaning that 
  $\alpha_\gamma(S/I_{s(\gamma)}) \subseteq A_{r(\gamma)}$
  is contained in $S/I_{r(\gamma)}$.
\end{definition}

\section{Induction of Groupoid Actions}\label{sec:induction}
The first step towards constructing an injective envelope among the $\G$-operator systems
is to find a suitable, but possibly too large, injective object in this category.
As injectivity of operator systems is well-understood in the absence of an action,
we aim to obtain a $\G$-injective object by inducing the trivial action of $\Gzero$
to an action of $\G$, and transporting morphisms in a natural fashion.
We are grateful to R. Meyer for pointing out that this is the 
correct notion behind the construction of group-equivariant injective envelopes.

Let $\G$ be a Hausdorff \etale{} groupoid
that acts on a \Galgebra{} $W$,
and $\H \subseteq \G$ a closed subgroupoid
such that $\H^{(0)} = \Gzero$.
Then $W$ is also an \Halgebra{}
and we denote the resulting restriction functor
from \Galgebra{}s to \Halgebra{}s by $\res_\H^{\G}$.
As alluded to above, we set out to find a right adjoint to $\res_\H^{\G}$
between the categories of $\G$- and $\H$-operator systems with equivariant unital completely positive (ucp) maps as morphisms.
Namely, we seek to assign 
a \Galgebra{} $\textup{Ind}_\H^{\G}(A)$
to every \Halgebra{} $A$ 
such that for every \Galgebra{} $W$ there is a natural bijection
\begin{equation}
  \Hom_\G\left( W, \textup{Ind}_\H^{\G}(A) \right)
  \cong
  \Hom_\H\left( \res_\H^{\G}W, A \right).
  \label{eq:inductionIsAdjoint}
\end{equation}
To construct injective envelopes, we may restrict to the case where $\H = \Gzero$.

In his recent PhD-thesis \cite[Chapter 3]{Boenicke}, C. B\"onicke provides a method
for inducing groupoid-\Csalgebra{}s from subgroupoids,
but this construction does not provide a right adjoint to restriction.
In this section, we modify his construction 
to obtain an induction functor satisfying Equation \eqref{eq:inductionIsAdjoint}.

As $\H = \Gzero$ consists only of units, there is no additional data in the $\Gzero$-action
and any \Gzeroalgebra{} that we want to induce to a \Galgebra{}
will have no further structure than that of a $C(\Gzero)$-algebra.
Let $A$ be such a \Gzeroalgebra{}.
As before we let $\A$ denote the bundle over $\Gzero$ associated with $A$.
We may then form its pullback $\G \ast_s \A$
along the source map $s\colon \G\rightarrow \Gzero$,
which is a bundle over $\G$.
We define the induced \Csalgebra{}
\begin{equation}
  \Ind A \defeq \Gamma_\textup{b}\left( \G, \G \ast_s \A \right)
  \label{eq:inductionDefinition}
\end{equation}
to be the \emph{bounded} continuous sections from $\G$ into that pullback.

\begin{remark}
It is here that we deviate from \cite[page 15]{Boenicke}, 
where only sections \emph{vanishing at infinity} in the appropriate sense are considered:
compare condition (2) on page 15 of \cite{Boenicke}. 
Note that condition (1) on the same page of \cite{Boenicke} is trivial for $\H = \Gzero$, 
but could well be added to obtain a more general induction functor.
Contrary to \cite{Boenicke}, our definition of $\Ind A$ 
does not yield a $C_0( \G )\text{-algebra}$,
as the action of $C_0(\G)$ is degenerate unless $\G$ is compact.

\end{remark}

Pushforward along the range map gives an action of $C(\Gzero)$
on $\Ind A$ by central multipliers:
For $f \in \Gamma_b(\G, \G\ast_s \A)$ and $g \in C ( \Gzero )$,
we set 
\[gf \defeq (g\circ r) f.\]
This action is non-degenerate, as $C( \Gzero )$ is unital.
It is worth pointing out that the fibre of $\Ind A$ at a unit $u$ is \emph{not} given by 
$\Gamma_b(\G^u, \G^u \ast_s, \A)$ as in \cite{Boenicke},
which makes it more difficult to formulate a $\G$-action on the induced bundle.
This is alleviated by 
the fibre projection only depending on the restriction to a neighbourhood of $\G^{u}$,
as we will see in the following lemma.
Recall that for a unit $u\in \Gzero$ the fibre $(\Ind A)_u$ at $u$ of the bundle $\Ind A$
is given by $(\Ind A) / I_u$,
where
$I_u = \overline{C_0\left( \Gzero\setminus u \right) \Ind A}$.
\begin{lemma}
  Let $f,g \in \Ind A$ be such that their restrictions to $r^{-1}(V)$ coincide
  for a neighbourhood $V\subseteq \Gzero$ of a unit $u\in \Gzero$.
  Then $f + I_u = g + I_u$.
  \label{lem:inducedFibersAreLocal}
\end{lemma}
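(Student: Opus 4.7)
The plan is to show that $h \defeq f - g$ lies in the ideal $I_u = \overline{C_0(\Gzero \setminus u) \Ind A}$ by exhibiting it exactly (no closure needed) as a product of a function vanishing at $u$ with an element of $\Ind A$.

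First I would record that, by hypothesis, $h$ is a bounded continuous section in $\Ind A$ whose restriction to the open set $r^{-1}(V)$ is identically zero. Since $\Gzero$ is compact Hausdorff (hence normal) and the singleton $\{u\}$ and the closed set $\Gzero \setminus V$ are disjoint closed sets, Urysohn's lemma produces a continuous function $\phi \colon \Gzero \to [0,1]$ with $\phi(u)=0$ and $\phi\equiv 1$ on $\Gzero\setminus V$. Compactness of $\Gzero$ then ensures $\phi \in C_0(\Gzero\setminus u)$.

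Next I would verify the pointwise identity $(\phi\circ r)\,h = h$ on all of $\G$: on $r^{-1}(V)$ both sides vanish because $h$ does there, while on $r^{-1}(\Gzero\setminus V)$ the factor $\phi\circ r$ equals $1$. Continuity of both sides and the fact that these two sets cover $\G$ conclude that the equality holds globally. Hence $h = (\phi\circ r)\,h$ lies in the (non-closed) product $C_0(\Gzero\setminus u)\cdot \Ind A \subseteq I_u$, using the $C(\Gzero)$-action defined just before the lemma.

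There is no serious obstacle here; the only subtlety is checking that one is allowed to separate $\{u\}$ from $\Gzero\setminus V$, which is immediate from the compact Hausdorff hypothesis on $\Gzero$. Once the separating function $\phi$ is in hand, the equality $f - g = (\phi\circ r)(f-g)$ delivers the conclusion $f + I_u = g + I_u$ without any approximation argument.
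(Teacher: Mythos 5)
Your proposal is correct and is essentially the paper's own argument: the paper also picks, by normality of $\Gzero$, a function $h\in C(\Gzero)$ with $h(u)=0$ and $h\equiv 1$ off $V$, and writes $g = f + h\,(g-f)$ with $h\,(g-f)\in C_0(\Gzero\setminus u)\,\Ind A\subseteq I_u$, which is exactly your identity $(\phi\circ r)(f-g)=f-g$. The only difference is that you spell out the pointwise verification and the Urysohn step explicitly; no gap either way.
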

\begin{proof}
  Take $f$ and $g$ as above and pick $h \in C(\Gzero)$ 
  such that $h(u) = 0$ and $h \equiv 1$ outside of $V$.
  Such $h$ since $\Gzero$ is normal.
  Then $g = f + h (g-f)$ 
  while $h (g-f) \in C_0( \Gzero \setminus u ) \Ind A$,
  so $f-g \in I_u$.
\end{proof}
For $f \in \Ind A$ and $u \in \Gzero$
we will write $[f]_u$ for the fibre projection $f+I_u$.
\begin{proposition}
  If $A$ is a \Gzeroalgebra{}, then $\Ind A$ is a \Galgebra{}.
  \label{prop:inductionAction}
\end{proposition}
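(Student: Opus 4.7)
The plan is to define the action of $\G$ on $\Ind A$ via left translation along bisections, to verify that it yields a family of \starisomorphism s satisfying the cocycle condition, and then to check continuity of the action map. For $\gamma \in \G$ with $s(\gamma) = u$ and $r(\gamma) = v$, I fix an open bisection $B \subseteq \G$ containing $\gamma$ and let $V = r(B)$. For every $\eta \in r^{-1}(V)$ there is a unique $\beta_\eta \in B$ with $r(\beta_\eta) = r(\eta)$, and the product $\beta_\eta^{-1} \eta$ satisfies $s(\beta_\eta^{-1} \eta) = s(\eta)$, so that for $f \in \Ind A$ the assignment
\[
  f^B(\eta) \defeq \bigl(\eta,\, f(\beta_\eta^{-1} \eta)\bigr)
\]
defines a bounded continuous section of $\G \ast_s \A$ over $r^{-1}(V)$. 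Using normality of $\Gzero$, I pick $h \in C(\Gzero)$ with $h(v) = 1$ and $\supp h \subseteq V$, and let $\tilde{f}^B \in \Ind A$ be the extension of $(h \circ r) \cdot f^B$ by zero outside $r^{-1}(V)$. I then set $\alpha_\gamma([f]_u) \defeq [\tilde{f}^B]_v$.

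Next I would establish that $\alpha_\gamma$ is a well-defined \starisomorphism{} $(\Ind A)_u \to (\Ind A)_v$. By the preceding lemma, $[\tilde{f}^B]_v$ depends only on $\tilde{f}^B$ on $r^{-1}(W)$ for any neighbourhood $W$ of $v$ on which $h$ is constantly $1$, hence only on $f|_{r^{-1}(V)}$. Independence of the cutoff $h$ is then automatic, and independence of the bisection follows by passing to $B \cap B'$ for two bisections $B, B' \ni \gamma$. To see independence of the representative $f$ of $[f]_u$, I reduce to the case $f = (\phi \circ r) g$ with $\phi \in C(\Gzero)$, $\phi(u) = 0$, and $g \in \Ind A$: here $f^B(\eta) = \phi(s(\beta_\eta)) \cdot g^B(\eta)$, and the continuous function $w \mapsto \phi(s((r|_B)^{-1}(w)))$ on $V$ vanishes at $v$, placing $\tilde f^B$ in $I_v$ after a routine cutoff. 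Multiplication and involution in $\Ind A$ are computed fibrewise, so the formula makes $\alpha_\gamma$ a \starhomomorphism{} directly, with $\alpha_{\gamma^{-1}}$ as its two-sided inverse. The cocycle $\alpha_{\delta \gamma} = \alpha_\delta \circ \alpha_\gamma$ follows from associativity in $\G$: for bisections $B \ni \gamma$ and $C \ni \delta$ with $CB$ a bisection around $\delta \gamma$, the unique element of $CB$ with range $r(\zeta)$ factors as $\delta_\zeta \cdot \beta_{\delta_\zeta^{-1} \zeta}$, so $(\delta_\zeta \beta_{\delta_\zeta^{-1}\zeta})^{-1} \zeta = \beta_{\delta_\zeta^{-1}\zeta}^{-1}(\delta_\zeta^{-1} \zeta)$.

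The main obstacle is the continuity of the action map $\Psi \colon \G \ast_s \Ind A \to \Ind A$. I would argue locally: fix $(\gamma_0, [f_0]_{s(\gamma_0)})$ and a bisection $B \ni \gamma_0$. The element $\tilde{f_0}^B \in \Ind A$ represents $\alpha_\gamma([f_0]_{s(\gamma)}) = [\tilde{f_0}^B]_{r(\gamma)}$ for every $\gamma \in B$, so the assignment $\gamma \mapsto \alpha_\gamma([f_0]_{s(\gamma)})$ is continuous on $B$ as the composition of the bundle section of $\Ind A$ determined by $\tilde{f_0}^B$ with the continuous map $r|_B$. To handle variation in the second coordinate I would use the characterisation of the bundle topology by tubes of width $\varepsilon$ about sections: an element $a$ close to $[f_0]_{s(\gamma_0)}$ is represented by some $f \in \Ind A$ with $\|f - f_0\|$ small on a neighbourhood of $s(\gamma_0)$ in $\Gzero$, and the local formula translates this into control of $\|\tilde f^B - \tilde{f_0}^B\|$ on the corresponding neighbourhood of $v$ inside $V$; upper semicontinuity of the fibre norm then shows that $\alpha_\gamma(a)$ lies in the prescribed tube around $\alpha_{\gamma_0}([f_0]_{s(\gamma_0)})$. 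The careful bookkeeping of tube sizes across the source and range sides of the bisection is the principal technical nuisance.
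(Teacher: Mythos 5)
Your proposal takes essentially the same route as the paper's proof: you define the action by translating sections along an open bisection with a cutoff, obtain well-definedness from the locality of the fibre projections together with transporting functions vanishing at $u$ through $B$ (the paper does this with an $\eps$-approximation by products $h_1h_2$, you with exact products $(\phi\circ r)g$, which is the same idea), get the cocycle and \star-isomorphism properties from composition of bisections and pointwise operations, and prove continuity by pushing a globally close representative through $B$, which is exactly the estimate in Equation \eqref{eq:continuousActionByBisections}. The only adjustment needed is to choose the cutoff $h$ identically one on a \emph{neighbourhood} of $v$ rather than merely $h(v)=1$: this is what Lemma \ref{lem:inducedFibersAreLocal} requires for the independence claims, and (after shrinking $B$ to $B\cap r^{-1}(\{h\equiv 1\}^{\circ})$) it is what makes your section $(h\circ r)f_0^B$ a valid representative of $\alpha_\gamma\left([f_0]_{s(\gamma)}\right)$ for all $\gamma$ near $\gamma_0$ in the continuity argument.
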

\begin{remark}
  In the spirit of \cite{Boenicke}, where fibre are associated with restrictions to $\G^u$, 
  the action should be by composition on the argument.
  However, as the fibre projection of a section $f\in \Ind A$ at a unit $u \in \Gzero$ 
  is not determined by the values on $\G^u$ alone, 
  we cannot act by the single element $\gamma\in \G$.
  Given that values on a \emph{neighbourhood} of $\G^u$ determine the fibres,
  we may instead choose a bisection around $\gamma$ 
  and use this to act on the argument.
  Using the locality of Lemma \ref{lem:inducedFibersAreLocal},
  this will turn out to be well-defined.
\end{remark}
\begin{proof}
  We have already seen that $\Ind A$ is a \Calgebra,
  so it remains to describe the $\G$-action.

  Let $f\in \Ind A$ be a section of the induced bundle,
  $u,v \in \Gzero$ be units,
  and $\gamma\in \G_u^v$ be an arrow with source $u$ and range $v$.
  Considering $[f]_u \in \left( \Ind A \right)_u$
  we want to define $\gamma.[f]_u \in \left( \Ind A \right)_v$.
  As $\G$ is \etale, we may pick an open neighbourhood $B$ of $\gamma$ that is a bisection.
  Then $ U = s(B)$ and $V = r(B)$ are open neighbourhoods of $u$ and $v$.
  We write $B.f$ for the section 
  \[ 
    B.f( \eta ) \defeq f( B^{-1}\eta )
    \quad \text{in} \quad
    \Gamma_b(r^{-1}(V), r^{-1}(V) \ast_s \A),
  \]
  where $\eta \in r^{-1}(V)$ and $B^{-1}\eta = \xi^{-1}\eta$ 
  for the unique $\xi\in B$ with $r(\xi) = r(\eta)$.
  In order to extend $B.f$ to a section in $\Ind A$, 
  we choose a function $h \in C(\Gzero)$ which is identically one on a neighbourhood of $v$ 
  and vanishes outside of $V$. 
  Such an $h$ exists by normality of $\Gzero$.
  Then $h (B.f)$ extends to a section on all of $\G$ that vanishes outside of $r^{-1}(V)$,
  and we set $\gamma.[f]_u = [h\cdot B.f]_v$.

  The resulting class is independent of the choice of $h$, 
  as two different choices $h_1$ and $h_2$ coincide on a neighbourhood $V'$ of $v$
  and therefore the extensions $h_1 (B.f)$ and $h_2 ( B.f)$ coincide on $r^{-1}(V')$.
  Hence the fibre projections coincide by Lemma \ref{lem:inducedFibersAreLocal}.

  Similarly, this is independent of the choice of bisection $B$:
  For two open bisections $B_1$ and $B_2$ containing $\gamma$, 
  the intersection $B_1 \cap B_2$ is still an open bisection containing $\gamma$,
  so we may assume $B_2 \subseteq B_1$. 
  But then $r(B_2) \subseteq r(B_1)$ 
  and we can choose the same $h$ for both $B_2$ and $B_1$.
  With this choice, $h ( B_1.f)$ and $h ( B_2.f)$ are equal as sections,
  even before passing to the fibre projection.

  For simplicity, we may from now on drop $h$ in the construction above 
  and assume that $B.f$ can be extended to a section on all of $\G$ without prior modification,
  as we can just intersect $B$ with the $r$-preimage of the neighbourhood of $v$ 
  on which $h$ is constant.

  To finish off the argument that $\gamma.[f]_u \defeq [B.f]_v$ is well-defined, 
  we show that it only depends on the $u$-fibre projection of $f$.
  Assuming that $[f]_u = [g]_u$, we may for every $\eps>0$ 
  find $h \in C_0( \Gzero \setminus u ) \cdot \Ind A$
  such that $g$ and $f+h$ are $\eps$-close.
  Writing $h = h_1 \cdot h_2$ with $h_1 \in C_0( \Gzero \setminus u)$ 
  and $h_2 \in \Ind A$,
  we may consider $B.h = B.h_1 \cdot B.h_2$, 
  where the action of $B$ on $\Gzero$ is given by the local homeomorphism $s(B) \rightarrow r(B)$ 
  via $r \circ (s|_B)^{-1}$.
  By the arguments above, we may assume that $B.h_1$ and $B.h_2$ can be extended 
  to functions on all of $\Gzero$ and $\G$, respectively.
  As then $B.h_1 \in C_0( \Gzero \setminus v )$, 
  we find that $[B.f]_v = [B.f + B.h]_v$.
  But $B.f + B.h$ and $B.g$ are $\eps$-close on $r^{-1}(V)$ since the action of $B$ is pointwise,
  and by extending as above with a cut-off function that is bounded by one, 
  we may modify these to two $\eps$-close functions on all of $\G$ 
  without changing their classes in the $v$-fibre.
  Therefore $[B.f]_v$ and $[B.g]_v$ are $\eps$-close for all $\eps>0$, hence equal.

  The $\G$-action satisfies $(\gamma\eta).[f]_u = \gamma.(\eta.[f]_u)$, 
  as we can compose open bisections where defined to obtain another open bisection.
  As the defined maps act pointwise, they are \starhomomorphism{}s,
  and as they are invertible we obtain \starisomorphism{}s.

  Finally, we check that the $\G$-action is continuous.
  That is, if for $f, f_\lambda \in \Ind A$ and $\gamma, \gamma_\lambda \in \G$ 
  we have $[f_\lambda]_{s(\gamma_\lambda)} \to [f]_{s(\gamma)}$ and $\gamma_\lambda \to \gamma$, 
  then we need to show that
  $\gamma_\lambda.[f_\lambda]_{s(\gamma_\lambda)} \to \gamma.[f]_{s(\gamma)}$.
  If $B$ is an open bisection containing $\gamma$, 
  then we will eventually have $\gamma_\lambda \in B$.
  Hence, we may use the same bisection to construct 
  $\gamma_\lambda.[f_\lambda]_{s(\gamma_\lambda)}$ and $\gamma.[f]_{s(\gamma)}$.
  Consequently, we have to show that $[B.f_\lambda]_{r(\gamma_\lambda)} \to [B.f]_{r(\gamma)}$,
  which is equivalent to 
  $\inf_r \|B.f_\lambda - B.f + h' \|_\infty \to 0$
  as $\lambda\to\infty$,
  where $\inf_r$ denotes the infimum taken over all 
  $h' \in C_0( \Gzero \setminus r(g_\lambda) )\cdot \Ind A$.
  We know, however, that $\inf_s \| f_\lambda - f + h \|_\infty \to 0$,
  where $\inf_s$ is the infimum taken over all 
  $h \in C_0( \Gzero \setminus s(\gamma_\lambda) )\cdot \Ind A$.
  By choosing $h'$ as $B.h$, we may bound
  \begin{equation}
    \inf\nolimits_r \|B.f_\lambda - B.f + h' \|_\infty 
    \leq \inf\nolimits_s \| f_\lambda - f + h \|_\infty \to 0 
    \label{eq:continuousActionByBisections}
  \end{equation}
  which yields the claim.
\end{proof}
\begin{remark}
  Note that we did not require $f$ to be continuous in the proof of the previous lemma.
  We will later turn the bounded, \emph{not necessarily continuous} sections
  $l^{\infty}\left( \G, \G \ast_s\A \right)$
  into a \Galgebra{} in the same way.
  \label{rem:boundedSectionsGAlgebra}
\end{remark}

The following notational remark is essential to avoid confusion:
\begin{remark}
For a section $a \in A = \Gamma( \Gzero, \A )$ 
we will denote by $a_u$ the value $a(u)$ of $a$ at the unit $u\in \Gzero$, 
which is the same as the projection $[a]_u$ of $a$ onto the appropriate fibre.
This is straightforward enough for $A$, but note that
we defined elements $f \in \Ind B$ in the induced \Csalgebra{} 
as functions on $\G$, not $\Gzero$.
Nevertheless, the associated bundle is of course still fibred over $\Gzero$, 
taking values in the appropriate quotients of $\Ind B$.
As such, while it makes sense to speak of $f(\gamma) \in B_{s(\gamma)}$,
this is \emph{not} the value of the section associated with $f$ at any given fibre,
even if $\gamma$ were a unit.
We will therefore refrain from writing $f_u$ for the projection 
$[f]_{u} \in \left( \Ind B \right)_u$
of $f$ in the fibre at a unit $u$,
as this class should not be confused with $f(u) \in B_{u}$.
\end{remark}

Let $A$ be a \Galgebra{} and $B$ a \Gzeroalgebra{}.
Below, we explain how
a $\Gzero$-\starhomomorphism{} $\phi\colon~A~\rightarrow~B$ 
lifts to a $\G$-\starhomomorphism{} $\psi\colon A \rightarrow \Ind B$.
First, we note that $A$ embeds into $\Ind A$ as \Galgebra{}s,
where we drop spelling out the restriction 
and denote $A$ as a $\G$- and a \Gzeroalgebra{} simultaneously:
Let
\[ 
  \iota \colon A 
  = \Gamma(\Gzero, \A) 
  \rightarrow \Gamma_b(\G, \G \ast_s \A) 
  = \Ind A 
\]
be given by sending a section $a$ 
to the section $\gamma \mapsto \alpha_\gamma^{-1}(a_{r(\gamma)})$,
where $\alpha$ is the $\G$-action on $\A$.
This is obviously a $C ( \Gzero )$-linear map, 
since multiplication is pointwise after the pushforward via the range map.
The given section is continuous, since it is given by a composition of continuous maps, namely
\begin{align*}
  \G 
  &\rightarrow \G \ast_r \Gzero 
  \rightarrow \G \ast_r \A 
  \rightarrow \G \ast_s \A \\
  \gamma
  &\mapsto (\gamma, r(\gamma))
  \mapsto (\gamma, a_{r(\gamma)})
  \mapsto (\gamma, \alpha_\gamma^{-1}(a_{r(\gamma)})),
\end{align*}
where the last map is continuous by continuity of the $\G$-action on $\A$.
Furthermore, $\iota$ is $\G$-equivariant, as $\G$ acts on the argument on $\Ind A$.
Note that all sections $\iota(a)$ have constant norm on range-fibres
and that this implies that $[\iota(a)]_u$ is determined by its values on $\G^u$,
in contrast to general sections in $\Ind A$.

Second, from a $\Gzero$-\starhomomorphism{} $\phi\colon A \rightarrow B$, 
we obtain a $\G$-\starhomomorphism{} $\Ind \phi$ mapping $\Ind A \rightarrow \Ind B$ as follows:
\begin{align*}
  \Ind \phi \colon \Ind A 
  = \Gamma_b(\G, \G\ast_s \A) &\rightarrow \Gamma_b(\G, \G\ast_s \B) 
  = \Ind B \\
  f &\mapsto \left( \gamma \mapsto \phi_{s\left( \gamma \right)}\left( f(\gamma) \right) \right).
\end{align*}
This is $C ( \Gzero )$-linear, 
since the appropriate fibre of $\phi$ is applied pointwise;
it is also $\G$-equivariant, 
since the map is pointwise and $\G$ acts on the argument.
The assigned section is obviously bounded, 
so it only remains to show that it is continuous.
Given a net $\gamma_\lambda \to \gamma$ in $\G$ and $f\in \Ind A$, 
we have $f(\gamma_\lambda) \to f(\gamma)$
and so for all sections $a \in A$ with $a_{s(\gamma)} = f(\gamma)$ we find that
$\| a_{s(\gamma_\lambda)} - f(\gamma_\lambda) \|_{A_{s(\gamma_\lambda)}} \to 0$.
Therefore
\[
  \left\| \left( \phi(a) \right)_{s(\gamma_\lambda)} 
  - \Ind \phi \left( f \right)(\gamma_\lambda) \right\|_{B_{s(\gamma_\lambda)}} 
  = 
  \left\| \phi_{s(\gamma_\lambda)}\left(a_{s(\gamma_\lambda)}\right) 
  - \phi_{s(\gamma_\lambda)}\left( f(\gamma_\lambda) \right) \right\|_{B_{s(\gamma_\lambda)}} 
  \leq 
  \left\| a_{s(\gamma_\lambda)} - f(\gamma_\lambda) \right\|_{A_{s(\gamma_\lambda)}}
\]
vanishes in the limit.
As $\phi$ is continuous, we have $\phi(a) \in B$ 
with $\left( \phi(a) \right)_{s(\gamma)} = \phi_{s(\gamma)}\left( f(\gamma) \right)$.
We may conclude that $\left( \Ind \phi(f) \right)(\gamma_\lambda)$ 
converges to $\left( \Ind \phi(f) \right)(\gamma)$.

Altogether, the adjoint isomorphism 
${\lift{}}\!\colon \Hom_{\Gzero}(\res A, B) \rightarrow \Hom_{\G}(A, \Ind B)$
is given by \[\lift \phi \defeq \Ind \phi \circ \iota\]
for $\phi \in \Hom_{\Gzero}\left( A, B \right)$.
Explicitly, a section $a \in A$ is mapped to the section given by
\begin{equation}
  \left( (\lift{}\phi)(a) \right)_{\gamma} 
  = \phi_{s(\gamma)}\left( \alpha_\gamma^{-1}\left( a_{r\left( \gamma \right)} \right) \right).
  \label{eq:explicitLift}
\end{equation}
We now see why we had to modify the induction procedure of \cite{Boenicke}:
The sections defined in Equation~\eqref{eq:explicitLift}
do not vanish at infinity.

Conversely, any $\G$-\starhomomorphism{ } $\psi\colon A \rightarrow \Ind B$ restricts to 
a $\Gzero$-\starhomomorphism{ } which we denote as $\res \psi\colon A \rightarrow B$ 
by restricting from $\G$ to $\Gzero$ as
\[\left( \res \psi(a) \right)_u \defeq  \psi(a) (u).\]
Extending $\phi$ and then restricting to $\res \lift \phi$ obviously gives back $\phi$, 
since $\phi_u( \alpha_u^{-1}\left( a_u \right) )=\phi_u(a_u)$.
On the other hand, we will see that restricting $\psi$ 
and then extending the result to $\lift \res \psi$ also gives back $\psi$.

For every $\gamma \in \G$ we obtain a \starhomomorphism{} 
$\text{ev}_\gamma\colon \Ind B \rightarrow B_{s(\gamma)}$
by evaluating a section in $\Ind B$ at $\gamma$.
As $\gamma \in \G^{r(\gamma)}$, 
this \starhomomorphism{} factors through the quotient map to the fibre at $r(\gamma)$
and satisfies a straightforward equivariance condition:
\begin{lemma}\label{lem:ev-equivariance}
  For $b_{r(\gamma)}\in B_{r(\gamma)}$ with $\gamma, \eta \in \G$ 
  where $s(\gamma) = r(\eta)$, we have 
  \[
    \text{ev}_\eta\left( \gamma^{-1}.\left( b_{r(\gamma)} \right) \right)
    = 
    \text{ev}_{\gamma\eta}\left( b_{r(\gamma)} \right).
  \]
\end{lemma}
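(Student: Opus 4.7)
The plan is to choose a bounded continuous section $f \in \Ind B$ representing the fibre class $b_{r(\gamma)}$ and then unpack both sides of the identity directly from the explicit formula for the $\G$-action on $\Ind B$ given in the proof of Proposition~\ref{prop:inductionAction}. Since both $\text{ev}_\eta$ and $\text{ev}_{\gamma\eta}$ factor through the quotient maps to $(\Ind B)_{r(\eta)}$ and $(\Ind B)_{r(\gamma\eta)}$, respectively, as noted just before the lemma, it suffices to prove equality of the two evaluated sections for one chosen lift $f$; the result will be independent of the lift.

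For the left-hand side, I pick an open bisection $B'$ around $\gamma^{-1}$ with $r(B')$ a neighbourhood of $s(\gamma) = r(\eta)$. By the $\G$-action formula from Proposition~\ref{prop:inductionAction},
\[
\gamma^{-1}.[f]_{r(\gamma)} = [B'.f]_{s(\gamma)},
\qquad
(B'.f)(\zeta) = f(\xi^{-1}\zeta),
\]
where $\xi$ is the unique arrow in $B'$ with $r(\xi) = r(\zeta)$. Evaluating this at $\zeta = \eta$, the bisection property forces $\xi = \gamma^{-1}$, since $\gamma^{-1} \in B'$ already satisfies $r(\gamma^{-1}) = s(\gamma) = r(\eta)$. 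Therefore $(B'.f)(\eta) = f(\gamma\eta)$, which in turn equals $\text{ev}_{\gamma\eta}(f) = \text{ev}_{\gamma\eta}(b_{r(\gamma)})$, giving the right-hand side.

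There is no genuine obstacle; the only delicate point is the book-keeping of sources and ranges and the verification that applying $\text{ev}_\eta$ to the class $[B'.f]_{s(\gamma)}$ is unambiguous. This well-definedness follows from the factorisation stated before the lemma: two representatives differ by an element of $I_{r(\eta)} = \overline{C_0(\Gzero\setminus r(\eta))\cdot \Ind B}$, and every such element vanishes when evaluated at $\eta$ because its $C_0$-factor vanishes at $r(\eta)$. With that, the collapse $\xi = \gamma^{-1}$ made possible by the hypothesis $s(\gamma) = r(\eta)$ is really the whole content of the lemma.
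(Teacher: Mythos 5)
Your proof is correct and follows essentially the same route as the paper: represent the fibre class by a section of $\Ind B$, let an open bisection around $\gamma^{-1}$ act on the argument, and observe that evaluating at $\eta$ forces the unique bisection element with range $r(\eta)=s(\gamma)$ to be $\gamma^{-1}$, giving $f(\gamma\eta)$. Your extra remark that $\text{ev}_\eta$ is well defined on the fibre class (since elements of $\overline{C_0(\Gzero\setminus r(\eta))\cdot\Ind B}$ vanish at $\eta$) is exactly the factorisation the paper asserts just before the lemma, so nothing is missing.
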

\begin{proof}
  This comes down to $\gamma$ acting on the argument.
  If $b \in B$ is a section with value $b_{r(\gamma)}$ in the appropriate fibre
  and $S$ is an open bisection containing $\gamma^{-1}$,
  then any lift of $\gamma^{-1} b_{r(\gamma)}$ to a section $Sb$ in $\Ind B$ is
  given by
  $\left( Sb \right)(\eta) = b(S^{-1}\eta)$.
  on a neighbourhood of $\G^{s(\gamma)}$. 
  Hence for $\eta \in \G^{s(\gamma)}$ we find 
  \[
    \text{ev}_\eta \left( \gamma^{-1}b_{r(\gamma)} \right)
    = b(S^{-1}\eta) 
    = b(\gamma\eta) 
    = \text{ev}_{\gamma\eta}\left( b_{r(\gamma)} \right).
  \]
\end{proof}
Using the previous lemma,
restricting $\psi \in \Hom_\G\left( A, \Ind B \right)$ 
and then lifting the restriction gives
\begin{align*}
  \left( \lift \res \psi \right)(a)(\gamma)
  &=
  \left(\res \psi \right)_{s(\gamma)}
  \left( \gamma^{-1} \left( a_{r\left( \gamma \right)} \right) 
  \right)\\
  &=
  \text{ev}_{s(\gamma)}\left( \psi_{s(\gamma)}
    \left( \gamma^{-1} \left( a_{r\left( \gamma \right)} \right) \right) 
  \right) \\ 
  &=
  \text{ev}_{s(\gamma)}\left( 
    \gamma^{-1}\left(\psi_{r(\gamma)}\left( a_{r\left( \gamma \right)} \right) \right) 
  \right)\\
  &=
  \text{ev}_{\gamma} \left( 
    \psi_{r(\gamma)}\left( a_{r\left( \gamma \right)} \right) 
  \right)\\
  &= \psi(a)(\gamma).
\end{align*}
Therefore, $\lift\colon Hom_{\Gzero}\left( A, B \right) \cong \Hom_{\G}\left( A, \Ind B \right)$ 
is a bijection,
as the two constructions are inverse to each other.

We proceed to show that this isomorphism is natural.
That is, given a $\G$-\starhomomorphism{ } $j\colon A\rightarrow B$ 
between \Galgebra{}s $A$ and $B$,
as well as a \Gzeroalgebra{} $C$, the following diagram commutes:
\begin{equation}\label{cd:Induction-Naturality}
  \begin{tikzcd}
    \Hom_{\Gzero}\left( B, C \right)
    \ar[r, rightarrow, "\cong" below, "\lift"]
    \ar[d, rightarrow, "j^\ast"]
    & \Hom_\G\left( B, \Ind C \right)
    \ar[d, rightarrow, "j^\ast"]
    \\
    \Hom_{\Gzero}\left( A, C \right)
    \ar[r, rightarrow, "\cong" below, "\lift"]
    & \Hom_\G\left( A, \Ind C \right)
  \end{tikzcd}
\end{equation}
Given $\phi \in \Hom_{\Gzero}\left( B, C \right)$
we have to show that $\lift \left(\phi \circ j \right)$ 
equals $\left( \lift \phi \right)\circ j$.
Indeed we find, using equivariance of $j$, that
\begin{align*}
  \left( \lift\left( \phi \circ j \right)\right) (a)(\gamma)
  &= \left( \phi \circ j \right)_{s(\gamma)}
  \left( 
    \gamma^{-1} \left( a_{r(\gamma)} \right) 
  \right) \\
  &=  \phi_{s(\gamma)} \left( 
    \gamma^{-1} \left( j_{r(\gamma)}\left( a_{r(\gamma)} \right) \right) 
  \right) \\
  &= \left( \lift \phi \right)\left( j(a) \right) (\gamma),
\end{align*}
where the action of $\gamma^{-1}$ on $a_{r(\gamma)}$ and $b_{r(\gamma)}$
is the $\G$-action on $A$ or $B$, respectively.

So far, we have only considered \starhomomorphism{}s as morphisms
between \Galgebra{}s.
To apply our induction procedure in the construction of a $\G$-equivariant injective envelope,
we will need to broaden our scope to also include unital, completely positive,
$\G$-equivariant $C ( \Gzero )$-maps between \Galgebra{}s.

First we note that being positive is a fibre-wise condition:
\begin{lemma}
  \label{lem:positiveIsFiberwise}
  Let $\phi\colon A \rightarrow B$
  be a $C_0\left( X \right)$-map between two $C_0\left( X \right)$-algebras $A$ and $B$.
  Then $\phi$ is positive if and only if $\phi_x \colon A_x \rightarrow B_x$
  is positive for every $x$ in $X$.
  Likewise, $\phi\colon A\rightarrow B$
  is completely positive
  if and only if all $\phi_x\colon A_x\rightarrow B_x$
  are completely positive.
\end{lemma}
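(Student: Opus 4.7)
The plan is to reduce both directions to two standard facts about $C_0(X)$-algebras: each fibre projection $q_x^A\colon A \to A_x$ is a surjective \starhomomorphism{}, and the family $\{q_x^A\}_{x\in X}$ is \emph{jointly faithful}, in the sense that $\bigcap_{x\in X} I_x = 0$. Throughout, use that $\phi$ being a $C_0(X)$-map implies $\phi_x \circ q_x^A = q_x^B \circ \phi$.

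For the forward direction, assume $\phi$ is positive and fix $x\in X$. Given a positive $a_x\in A_x$, use surjectivity of $q_x^A$ to lift it to a self-adjoint element $a'\in A$, and set $a \defeq |a'|$. As continuous functional calculus commutes with the \starhomomorphism{} $q_x^A$, we have $q_x^A(a) = |a_x| = a_x$. Positivity of $\phi$ gives $\phi(a)\geq 0$ in $B$, and applying the \starhomomorphism{} $q_x^B$ yields $\phi_x(a_x) = q_x^B(\phi(a))\geq 0$.

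For the converse, suppose every $\phi_x$ is positive and let $a\in A$ with $a\geq 0$. Then $q_x^A(a)\geq 0$ for every $x$, whence $q_x^B(\phi(a)) = \phi_x(q_x^A(a))\geq 0$ for every $x$. The crucial step is that positivity in $B$ can be checked fibrewise: since $\bigcap_x I_x^B = 0$, the product of fibre projections $B \hookrightarrow \prod_x B_x$ is an \emph{injective} \starhomomorphism{}, and any injective \starhomomorphism{} reflects positivity (it reflects self-adjointness, and is isometric, so preserves the spectrum of a normal element). Hence $\phi(a)\geq 0$.

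The claim for complete positivity follows from the positive case by passing to matrix amplifications $\phi\otimes\mathrm{id}_{M_n}\colon M_n(A)\to M_n(B)$, observing that $M_n(A)$ is naturally a $C_0(X)$-algebra with $I_x^{M_n(A)} = M_n(I_x^A)$, hence $M_n(A)_x = M_n(A_x)$, and that the fibre of the amplified map at $x$ is $\phi_x\otimes\mathrm{id}_{M_n}$. The principal technical point is the converse direction, namely the joint faithfulness of the bundle of fibre quotients; this is a standard property of $C_0(X)$-algebras, referenced already in the preliminaries via \cite[Appendix C]{WilliamsCrossed}.
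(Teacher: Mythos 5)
Your proof is correct and follows essentially the same route as the paper: both reduce the statement to the embedding of a $C_0(X)$-algebra into the product (resp.\ sum) of its fibres, use that injective \starhomomorphism{}s are order embeddings to transfer positivity, and handle complete positivity via the identifications $\left( M_n(A) \right)_x \cong M_n(A_x)$ and $\left( \phi^{(n)} \right)_x = \left( \phi_x \right)^{(n)}$. The only difference is that you spell out the lifting of positive elements along the fibre quotients, which the paper leaves implicit.
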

\begin{proof}
  We may embed $A$ into $\bigoplus\nolimits_{x\in X} A_x$, and as injective \starhomomorphism{}s 
  are order embeddings, the first statement follows.

  For the second half observe that
  $M_n(A)$ is a $C_0(X)$-algebra
  whose fibre can be understood as
  $\left( M_n(A) \right)_x \cong M_n\left( A_x \right)$
  and that under this identification
  $\left( \phi^{(n)} \right)_x = \left( \phi_x \right)^{(n)}$.
\end{proof}
Now, for a ucp $\Gzero$-map $\phi\colon A \rightarrow B$
from a \Galgebra{} $A$ to a \Gzeroalgebra{} $B$,
we may as above form 
\[
  \Ind \phi(f)(\gamma) \defeq \phi_{s(\gamma)}\left( f(\gamma) \right)
  \qquad\text{and}\qquad
  \lift \phi \defeq \Ind \phi \circ \iota.
\]
Without any modification to the arguments above, 
$\lift$ still is natural and an inverse to the restriction,
and $\lift \phi$ is ucp, 
since a section $f\in \Gamma_b\left( \G, \G\ast_s \A \right)$ is positive
if and only if $f\left( \gamma \right)$ is positive for all $\gamma \in \G$.
Hence, if $\phi\colon A \rightarrow B$ is completely positive 
then so are $\Ind \phi\colon \Ind A \rightarrow \Ind B$
and $\lift \phi\colon A \rightarrow \Ind B$.

\section{The Groupoid Furstenberg Boundary}\label{sec:envelope}
Let $\G$ be a locally compact Hausdorff \etale{} groupoid
with compact unit space $X \defeq \Gzero$.
Using the construction of $\G$-equivariant injective envelopes above, 
we may now find a $\G$-injective \Galgebra{} enveloping $C(X)$
in the category of $\G$-operator systems.
Note that this generalises the Furstenberg boundary of a discrete group $\Gamma$, 
where the unit space $X$ is a single point and the boundary coincides
with the spectrum of the $\Gamma$-equivariant injective envelope of $C(X) = \Cpx$.
The methods below may however be used more generally 
to construct groupoid-equivariant injective envelopes.

Consider the (non-dynamic) injective envelope 
$I \defeq I\left( C(X)  \right)$.
Let $\tilde X$ denote its spectrum by, so that 
$I = C( \tilde X )$.
As $C(X)$ embeds into $C(\tilde X)$, the latter is a $C(X)$-algebra
and it is furthermore injective among such:
For any two (unital) $C(X)$-algebras $V\subseteq W$, 
we may lift a ucp $C(X)$-map $\psi\colon V\rightarrow C( \tilde X )$
to a ucp map $W \rightarrow C( \tilde X )$ by disregarding the $C(X)$-structure,
and the result will necessarily be a $C(X)$-map
if all maps and algebras are unital,
as in that case the action of $C(X)$ on $V$ and $W$ is determined by a subalgebra of $V$
which lies in the multiplicative domain of $\psi$.

Therefore, the induced \Csalgebra{},
$\Ind  C( \tilde X ) $,
is a $\G$-injective \Galgebra{}:
Given two \Galgebra{}s $V\subseteq W$ 
and a $\G$-equivariant ucp map
$\psi \colon V\rightarrow \Ind C( \tilde X ) $, 
we may first restrict to $\res \psi\colon V \rightarrow C( \tilde X )$, 
then use the $\Gzero$-injectivity 
to extend the restriction to $\phi\colon W \rightarrow C( \tilde X )$
and consequently lift to a $\G$-\starhomomorphism{} 
$\lift \phi \colon 
W \rightarrow \Ind C( \tilde X ) $.
By naturality as in Diagram~\eqref{cd:Induction-Naturality}, this is the desired extension,
as lifting $\res\psi$ results in $\lift \res \psi = \psi$.

\begin{equation}
  \label{cd:injectiveExtensionByInduction}
  \begin{tikzcd}
    W
    \ar[dr, "\phi"description, dashed]
    \ar[drr, bend left, "\lift \phi"description]
    \\
    & C( \tilde X )
    & \Ind C( \tilde X )
    \ar[l, "\text{res}"description, gray]
    \\
    V
    \ar[ur, "\res \psi"description]
    \ar[uu, hookrightarrow]
    \ar[urr, bend right, "\psi = \lift \res \psi"description]
  \end{tikzcd}
\end{equation}

Having found a $\G$-injective $\G$-extension of the $\G$-\Cs-algebra $C(X)$,
we follow Hamana's  scheme \cite{HamanaDynamical}
to construct an injective envelope, 
adapted to a $\G$-equivariant setting.
The following definitions are $\G$-equivariant adaptations
of the corresponding notions in \cite{HamanaDynamical}.

\begin{definition}
  Let $A$ and $B$ be $\G$-\Cs-algebras with $A$ a sub-$\G$-\Cs-algebra of $B$.
  An \emph{$A$-seminorm} on $B$ is a seminorm $p$ on $B$, such that
  $p(b) = \|\phi(b)\|_B$ 
  for some $\G$-equivariant $C( \Gzero )$-linear ucp map 
  $\phi \colon B\rightarrow B$
  that restricts to the identity on $A$.
  \label{def:ASeminorm}
\end{definition}

The $A$-seminorms are partially ordered by the pointwise order
where an $A$-seminorm $p$ is dominated by an $A$-seminorm $q$ if
$p(b) \leq q(b)$ for all  $b \in B$.
We write $p \prec q$.

\begin{definition}
  Let $A$ and $B$ be $\G$-\Cs-algebras with $A$ a sub-$\G$-\Cs-algebra of $B$.
  A map $\phi$ as above which is furthermore idempotent is called 
  an $A$-\emph{projection} on $B$,
  that is, 
  a $\G$-equivariant $C( \Gzero )$-linear idempotent ucp map 
  $\phi\colon B\rightarrow B$
  that restricts to the identity on $A$.
  \label{def:AProjection}
\end{definition}

Note that despite the name the range of an $A$-projection is in general not equal to $A$, 
but merely contains it.
The $A$-projections are likewise partially ordered
by $\phi \leq \psi$
if
$\phi \circ \psi = \phi = \psi \circ \phi$.
To every $A$-projection $\phi$ on $B$ there is an associated $A$-seminorm $p$
defined by $p(b) = \|\phi(b)\|_B$ for $b\in B$.

\begin{definition}
  Let $A$ be a \Galgebra{} 
  and $\iota$ be a $\G$-equivariant embedding of $A$
  into another \Galgebra{} $B$.
  We then call $\left( B, \iota \right)$ a $\G$-\emph{extension} of $A$.
  The extension is said to be $\G$-\emph{injective}
  if $B$ is a $\G$-injective \Galgebra{}.
  \label{def:extension}
\end{definition}

\begin{definition}
  A $\G$-extension $(B, \iota)$ of $A$ is said to be \emph{$\G$-essential}
  if any ucp $\G$-map $\phi\colon B \rightarrow C$
  into a third \Galgebra{} $C$ is injective
  if $\phi\circ \iota$ is injective on $A$.

  The $\G$-extension is said to be \emph{$\G$-rigid}
  if the identity is the unique ucp $\G$-map $\psi\colon B\rightarrow B$
  that satisfies $\psi \circ \iota = \iota$.
  \label{def:extensionEssentialRigid}
\end{definition}

We will set out to show the existence of a minimal $A$-seminorm
from which we will construct a minimal $A$-projection.
The minimal $A$-projection then yields a $\G$-rigid $\G$-injective $\G$-extension 
by equipping its image with the Choi--Effros multiplication,
which turns its range into a \Cs-algebra 
without changing the complete order isomorphism class.
As any rigid injective extension is essential, 
this will be the desired $\G$-injective envelope:

\begin{definition}
  A \emph{$\G$-injective envelope} of a \Galgebra{}
  is a $\G$-extension
  which is both $\G$-injective
  and $\G$-essential.
  \label{def:injectiveEnvelope}
\end{definition}

Aiming to apply Zorn's lemma, 
we show that every decreasing net of $A$-seminorms has a lower bound.
Again we denote by $X \defeq \Gzero$ the unit space of a Hausdorff \etale{} groupoid $\G$
and by $I = C(\tilde X)$ the non-dynamic injective envelope of $C(X)$,
such that $\Ind I$ is a $\G$-injective \Galgebra.
\begin{lemma}
  Every decreasing net $p_i$ of $C(X)$-seminorms on $\Ind I$ has a lower bound.
  \label{lem:lowerBound}
\end{lemma}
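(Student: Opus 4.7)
The plan is to realize each seminorm by an associated ucp map, extract a limit point of these maps via a standard compactness argument, and verify that the limit defines a $C(X)$-seminorm dominated by every $p_i$.

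By Definition \ref{def:ASeminorm}, for each $i$ there exists a ucp, $C(X)$-linear, $\G$-equivariant map $\phi_i \colon \Ind I \to \Ind I$ restricting to the identity on $C(X)$, with $p_i(b) = \|\phi_i(b)\|$. Composing each $\phi_i$ with the canonical inclusion $\Ind I \hookrightarrow (\Ind I)^{\ast\ast}$ gives a ucp map into the enveloping W*-algebra. By Banach--Alaoglu and Tychonoff, the set of ucp maps $\Ind I \to (\Ind I)^{\ast\ast}$ is compact in the point-ultraweak topology, so $(\phi_i)$ has a cluster point $\phi$ to which some subnet $(\phi_{i_k})$ converges pointwise.

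The conditions of being ucp, $C(X)$-linear, $\G$-equivariant, and restricting to the identity on $C(X)$ are each expressible as equalities between evaluations on individual elements, and so all pass to pointwise ultraweak limits. The principal obstacle is showing that $\phi$ actually takes values in $\Ind I$ rather than merely in $(\Ind I)^{\ast\ast}$. My plan is to exploit that $I = C(\tilde X)$ is monotone complete (since the injective envelope of a commutative \Cs{}-algebra has extremally disconnected spectrum), so that each fibre of the bundle $\G \ast_s \mathcal I$ is itself a W*-algebra and the ultraweak limit may be computed fibrewise, yielding a bounded section of the bundle. Continuity of the resulting section should then follow from $\G$-equivariance combined with the locality property of Lemma \ref{lem:inducedFibersAreLocal}; alternatively, one can first construct the limit inside the larger \Galgebra{} $l^{\infty}(\G, \G \ast_s \mathcal I)$ of bounded but not necessarily continuous sections and afterwards pull it back into $\Ind I$.

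Once such a cluster point $\phi$ in the correct class is secured, set $p(b) \defeq \|\phi(b)\|$. For any fixed index $i_0$, taking $k$ sufficiently large that $i_k \geq i_0$ gives $\|\phi_{i_k}(b)\| \leq p_{i_0}(b)$ by monotonicity of the net, and lower semicontinuity of the norm under ultraweak convergence then yields $\|\phi(b)\| \leq p_{i_0}(b)$. Hence $p$ is a $C(X)$-seminorm with $p \prec p_{i_0}$ for every $i_0$, providing the required lower bound.
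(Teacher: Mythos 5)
Your overall strategy (realise each $p_i$ by its ucp map, extract a point-\weakstar{} cluster point in some ambient dual space, and estimate) is the right shape, but two of the load-bearing steps are missing or wrong. First, the mechanism for getting back into $\Ind I$. Your primary route fails: the fibres of $C(\tilde X)$ as a $C(X)$-algebra are $C(q^{-1}(x))$, which are not W$^*$-algebras, and monotone completeness of $C(\tilde X)$ neither passes to these quotients nor provides a predual, so there is no ``fibrewise ultraweak limit'' to take; moreover a point-\weakstar{} limit of continuous sections has no reason to be a continuous section, so one should not expect the limit to lie in $\Ind I$ at all, and equivariance plus Lemma \ref{lem:inducedFibersAreLocal} will not rescue continuity. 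Your fallback (``construct the limit in the bounded, not necessarily continuous sections and pull it back'') is the right idea but the pull-back is exactly the point you leave unexplained. The paper works in $M \defeq \ell^\infty\left( \G, \G\ast_s \bigsqcup_{x\in X}\ell^\infty\left( q^{-1}(x) \right) \right)$ --- with the fibres enlarged from $C(q^{-1}(x))$ to $\ell^\infty(q^{-1}(x))$ precisely so that $M$ is a von Neumann algebra with explicit predual $\ell^1$ and still a \Galgebra{} --- and then uses $\G$-injectivity of $\Ind I$ to produce a ucp $\G$-map $E\colon M \rightarrow \Ind I$ extending the identity along $\Ind I \hookrightarrow M$; the lower bound is $p(f) = \|E\circ\phi(f)\|$. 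Without such an $E$ your $p(b) = \|\phi(b)\|$ is not a $C(X)$-seminorm in the sense of Definition \ref{def:ASeminorm}, which demands a ucp $\G$-map from $\Ind I$ \emph{to itself} fixing $C(X)$.

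Second, the claim that $\G$-equivariance ``passes to pointwise ultraweak limits'' because it is an equality of evaluations is not justified and is in fact the delicate part of the proof. Equivariance is a fibrewise condition involving the quotient maps $[\,\cdot\,]_u$ and the action by bisections, and in your setting $(\Ind I)^{\ast\ast}$ is not even equipped with a $\G$-algebra structure for which the condition on $\phi$ makes sense. The paper's proof spends most of its length on exactly this: after cutting a section $f$ down to be supported over $s(B)$ for a bisection $B\ni\gamma$, it shows $\phi(B.f) = B.\phi(f)$ by letting $B$ act on the predual $M_\ast$ and testing $\phi_i(B.f)(\chi) = \phi_i(f)(B^{-1}.\chi) \to \phi(f)(B^{-1}.\chi)$ for suitably supported $\chi$. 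Some argument of this kind (and hence a codomain whose predual carries a compatible action) is indispensable; your final norm estimate $\|\phi(b)\| \leq p_{i_0}(b)$ via lower semicontinuity is fine and matches the paper's.
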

\begin{proof}
  In order to take \weakstar-limits we embed $\Ind I$ into a von-Neumann algebra.
  Observe that the embedding $C(X) \hookrightarrow I = C( \tilde X )$
  yields a surjective continuous map $q\colon \tilde X \rightarrow X$
  and that the fibres of $I$ as a $C(X)$-algebra
  are of the form $C(q^{-1}(x))$.
  We consider $\ell^\infty(\tilde X)$ seen as a $C(X)$-algebra
  with the obvious map of $C\left( X \right)$ into its central multipliers,
  yielding a decomposition of the associated bundle into fibres as
  $\bigsqcup_{x \in X}\ell^{\infty}( q^{-1}\left( x \right) )$.
  We then denote by
  \[
    M \defeq 
    \ell^{\infty}\left( \G, 
      \G \ast_s \bigsqcup_{x \in X}\ell^{\infty}\left( q^{-1}\left( x \right) \right) 
    \right)
    \cong
    \ell^\infty \left( \left\{ 
        \left( \gamma, \tilde x \right) \in \G \times \tilde X
        \bigm|
        s(\gamma) = q(\tilde x)
    \right\} \right),
  \]
  the bounded sections in the pullback along $s$ that are not necessarily continuous.
  As noted before, $M$ can be equipped with a $\G$-action 
  in the same way that $\Ind I$ can, making it a \Galgebra{}.
  We can embed $\Ind I$ into $M$ as a \Galgebra{}
  by utilizing the inclusion of $I= C(\tilde X)$ into $\ell^\infty(\tilde X)$.
  We denote this inclusion $\Ind I \rightarrow M$ by $\kappa$.
  By $\G$-injectivity of $\Ind I$, we may lift the identity on $\Ind I$
  along the inclusion $\kappa\colon \Ind I \hookrightarrow M$
  to a ucp $\G$-map $E\colon M \rightarrow \Ind I$,
  which necessarily contains the image of $\Ind I$ in its multiplicative domain.
  Using this, we may take weak limits in $M$ and project them back down to $\Ind I$
  as follows.

  Let $p_i$ be the decreasing net of $C(X)$-seminorms on $\Ind I$, 
  and let $\phi_i\colon \Ind I \rightarrow \Ind I$ be the ucp $\G$-maps fixing $C(X)$
  associated with the seminorms defined by $p_i(x) = \|\phi_i(x)\|_{\Ind I}$.
  As the maps $\kappa\circ\phi_i\colon \Ind I \rightarrow M$ 
  are bounded, there is a point-\weakstar{} convergent subnet of $\kappa \circ \phi_i$.
  For ease of notation we drop $\kappa$ and consider $\phi_i$ as maps $\Ind I \rightarrow M$,
  so that passing to a subnet 
  we may assume that $\phi_i(f)$ converges to $\phi(f)$ in the \weakstar-topology
  for some map $\phi\colon \Ind I \rightarrow M$ and every $f\in \Ind I$.
  The limit $\phi$ will still be a $C(X)$-linear ucp map fixing $C(X)$,
  as all of these are pointwise conditions, 
  but we need to check that it is a $\G$-map.
  That is, for every $f,g \in \Ind I$ and $\gamma \in \G^{v}_{u}$ 
  with $\gamma. [f]_u = [g]_v$,
  we need to show that 
  \begin{equation}
    \gamma. [\phi(f)]_u = [\phi(g)]_v.
    \label{eq:weaklyContinuousAction}
  \end{equation}
  Fixing an open bisection $B$ containing $\gamma$, 
  we may assume that $f$ is supported in $r^{-1}( s\left( B \right) )$ 
  and that $g = B.f$.

  We claim that $\phi(B.f) = B.\phi(f)$:
  The predual of $M$,
  \[
    M_\ast 
    \cong
    \ell^1 \left( \left\{ 
        \left( \gamma, \tilde x \right) \in \G \times \tilde X
        \bigm|
        s(\gamma) = q(\tilde x)
    \right\} \right)
    \cong 
    \ell^{1}\left( \G, 
      \G \ast_s \bigsqcup_{x \in X}\ell^{1}\left( q^{-1}\left( x \right) \right) 
    \right),
  \]
  carries an analogous $\G$-action.
  For $f\in \Ind I \subseteq M$ and $\chi \in M_\ast$, 
  the evaluation $f(\chi)$ only depends on the values of $\chi$ on the support of $f$,
  seen as a function on $\Gzero$.
  Therefore, when $f$ is supported in $s(B)$, 
  $B.f$ is supported in $r(B)$.
  To evaluate $\left( B.f \right)(\chi)$ we may assume that $\chi$ is supported on $r(B)$ 
  and find
  $\left( B.f \right)\left( \chi \right) = f\left( B^{-1}(\chi) \right)$.
  Now for $\supp(f) \subseteq s(B)$ and $\supp(\chi)\subseteq r(B)$ we calculate
  \begin{align*}
    \phi_i(B.f)(\chi)
    = \left( B.\phi_i(f) \right)(\chi)
    = \phi_i(f)\left( B^{-1}.\chi \right)
    \rightarrow
    \phi(f)\left( B^{-1}.\chi \right)
    = \left( B.(\phi(f)) \right)(\chi).
  \end{align*}
  As the left hand expression converges to $\phi(B.f)(\chi)$, 
  we can conclude that $\phi(B.f) = B.\phi(f)$.
  Since $B.f$ is a valid choice of $\gamma$ in Equation \eqref{eq:weaklyContinuousAction},
  we conclude that
  \begin{align*}
    \gamma.\left( \phi_u\left( [f]_u \right) \right)
    = \gamma.[\phi(f)]_u  
    = [B.\phi(f)]_v 
    = [\phi(B.f)]_v
    = \phi_v\left( [B.f]_v \right)
    = \phi_v\left( \gamma. [f]_u \right).
  \end{align*}
  As every element of $\left( \Ind I \right)_u$ is of the form $[f]_u$ for some such $f$,
  this proves $\G$-equivariance of the limit $\phi$.

  We may thus define a $C(X)$-seminorm $p$ on $\Ind I$ by $p(f) = \|E\circ\phi(f)\|_{\Ind I}$.
  As in \cite[Lemma 3.4]{HamanaOperatorSystems}, we then find that
  \begin{equation*}
    p(f) = \|E \circ \phi(f)\|
    \leq \|\phi(f)\|
    \leq \limsup \|\phi_j(f)\|
    = \lim p_i(f),
  \end{equation*}
  with $j$ indexing the convergent subnet chosen before.
  Now $p$ is the desired lower bound.
\end{proof}

By Zorn's lemma we know of the existence of a minimal $C(X)$-seminorm on $\Ind I$,
from which we obtain a minimal $C(X)$-projection:

\begin{lemma}
  There is a minimal $C(X)$-projection on $\Ind I$.
  \label{lem:minimalProjection}
\end{lemma}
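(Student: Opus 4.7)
The plan is to adapt Hamana's classical argument \cite{HamanaOperatorSystems} to the $\G$-equivariant setting, applying Zorn's lemma to the poset of $C(X)$-projections on $\Ind I$. Since the identity is a $C(X)$-projection, the poset is nonempty; the task reduces to verifying that every totally ordered descending chain $(\phi_i)_{i \in I}$ of $C(X)$-projections admits a lower bound that is again a $C(X)$-projection.

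For such a chain, the associated seminorms $p_{\phi_i}(f) = \|\phi_i(f)\|$ form a descending net, and by Lemma~\ref{lem:lowerBound} we obtain a lower-bound seminorm $p$ that is constructed explicitly: embed $\Ind I \hookrightarrow M$ with conditional expectation $E \colon M \to \Ind I$, pass to a subnet along which $\phi_i$ converges pointwise weak-$\ast$ to some $\tilde\phi \colon \Ind I \to M$, and set $\phi \defeq E \circ \tilde\phi$. The arguments of Lemma~\ref{lem:lowerBound} already show that $\phi$ is a ucp, $C(X)$-linear, $\G$-equivariant map $\Ind I \to \Ind I$ fixing $C(X)$ and satisfying $p(f) = \|\phi(f)\|$. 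What remains is to show that $\phi$ is idempotent and satisfies $\phi \circ \phi_i = \phi = \phi_i \circ \phi$ for each $i$.

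For fixed $i$, the chain relation $\phi_j \circ \phi_i = \phi_j = \phi_i \circ \phi_j$ along the tail $j \geq i$ gives $\phi_j(\phi_i(f)) = \phi_j(f)$; taking the weak-$\ast$ cluster point along the cofinal subnet and composing with $E$ yields $\phi \circ \phi_i = \phi$. The reverse identity $\phi_i \circ \phi = \phi$ requires interchanging $\phi_i$ with a weak-$\ast$ limit, which I would handle by using $\G$-injectivity of $\Ind I$ to extend each $\phi_i$ to a ucp $\G$-equivariant $C(X)$-linear map $\bar\phi_i \colon M \to \Ind I$ and then bootstrapping weak-$\ast$ continuity of the extension from that of the cluster point. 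Idempotency $\phi \circ \phi = \phi$ then follows by running the same limit argument with $\phi_i$ replaced by a further cofinal subnet approaching $\phi$.

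The main obstacle is precisely this interchange of a weak-$\ast$ limit with application of $\phi_i$: it is the subtle core of Hamana's construction, further complicated in the groupoid setting by the need to preserve $\G$-equivariance. However, the techniques from the proof of Lemma~\ref{lem:lowerBound} (in particular the treatment of equivariance under weak-$\ast$ limits via bisections, as in Equation~\eqref{eq:weaklyContinuousAction}) provide the blueprint for the required argument. Once both identities are in hand, Zorn's lemma yields the desired minimal $C(X)$-projection on $\Ind I$.
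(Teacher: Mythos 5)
Your overall strategy differs from the paper's in a way that runs into a real obstruction. You apply Zorn directly to the poset of $C(X)$-projections, which forces you to prove that the cluster point $\phi = E\circ\tilde\phi$ of a chain $(\phi_i)$ is again a projection dominated by every $\phi_i$. The relation $\phi\circ\phi_i=\phi$ is indeed harmless (you evaluate the net at the fixed element $\phi_i(f)$), but the reverse relation $\phi_i\circ\phi=\phi$ and the idempotency $\phi\circ\phi=\phi$ both require pulling $\phi_i$ (respectively $\phi$, and the expectation $E$) through a point-\weakstar{} limit, and your proposed fix does not work: a ucp extension $\bar\phi_i\colon M\rightarrow \Ind I$ produced by $\G$-injectivity is just an abstract Arveson-type extension and has no weak$^\ast$ continuity whatsoever, nor does the point-\weakstar{} cluster point $\tilde\phi$ or the expectation $E$, so there is nothing to ``bootstrap'' from. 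The equivariance argument of Lemma~\ref{lem:lowerBound} is irrelevant here: it controls the $\G$-action on the limit, not compositions of maps with limits. This interchange problem is exactly the reason the classical argument is not organised around chains of projections.

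The paper (following Hamana and Kaufman) sidesteps it: Zorn is applied to the poset of $C(X)$-\emph{seminorms}, where Lemma~\ref{lem:lowerBound} already supplies lower bounds of decreasing nets, giving a minimal seminorm $p$ implemented by a ucp $\G$-map $\phi$. One then forms the averages $\phi_n = (\phi+\phi^2+\cdots+\phi^n)/n$, takes a point-\weakstar{} cluster point, composes with $E$ to get another $C(X)$-seminorm dominated by $p$, and minimality of $p$ forces $\|\phi(f)\| = \limsup_n\|\phi_n(f)\|$; since $\phi_n(f-\phi(f)) = (\phi(f)-\phi^{n+1}(f))/n$ has norm at most $2\|f\|/n$, this yields $\phi^2=\phi$ with no continuity of any limit map needed. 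Minimality of $\phi$ among projections then follows because any projection $\psi\leq\phi$ induces a seminorm dominated by $p$, hence equal to it, and $\ker\psi=\ker\phi$ plus idempotency gives $\psi=\phi$. To repair your proof you should switch to this route; as it stands, the chain-of-projections argument has a genuine gap at its central step.
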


\begin{proof}
  We follow \cite[Thm 3.5]{HamanaOperatorSystems}, 
  which originally followed \cite[Thm 1]{KaufmanByHamana}.
  Let $p$ be a minimal $C\left( X \right)$-seminorm and $\phi$ the ucp map implementing it.
  We show that $\phi$ is a projection.

  Define the net
  \[\phi_n \colon \Ind I \rightarrow \Ind I \subseteq M
    \qquad \text{by} \qquad
    \phi_n \defeq \left( \phi + \phi^2 + \cdots + \phi^n \right)/n,
  \]
  and pass to a point-\weakstar{} convergent subnet
  as in the proof of Lemma \ref{lem:lowerBound},
  and denote the limit by $\phi_\infty\colon \Ind I \rightarrow M$.
  Using the conditional expectation $E\colon M\rightarrow \Ind I$
  as before, we note that $E\circ \phi_\infty$ induces a $C(X)$-seminorm
  and for $f\in \Ind I$ we obtain that
  \begin{align*}
    \|E\circ \phi_\infty(f)\|
    \leq \|\phi_\infty(f)\|
    \leq \limsup \|\phi_n(f)\|
    \leq \|\phi(f)\|
    = p(f),
  \end{align*}
  which implies $\|\phi(f)\| = \limsup \|\phi_n(f)\|$
  by minimality of $p$.
  Hence
  \[
    \|\phi(f) - \phi^2(f)\|
    = \| \phi\left( f - \phi(f) \right)\|
    = \limsup \|\phi_n\left( f-\phi(f) \right)\|
    = 0,
  \]
  so $\phi$ is idempotent and therefore a $C(X)$-projection.

  Among $C(X)$-projections, $\phi$ is also minimal:
  Given any other $C(X)$-projection $\psi$
  with $\psi \leq \phi$, or equivalently $\psi\circ\phi = \psi = \phi\circ\psi$,
  we find that
  \[
    \|\psi(f)\|
    = \| \psi\circ \phi(f)\|
    \leq \|\phi(f)\|
    = p(f),
  \]
  and so $\psi$ and $\phi$ define the same seminorm by minimality of $p$.
  In particular, $\ker\psi = \ker \phi$.
  As $\psi$ is idempotent, 
  we have $\psi(f) - f \in \ker\psi = \ker \phi$,
  for every $f\in \Ind I$,
  and
  $\psi(f) = \phi\circ\psi(f) = \phi(f)$,
  so the two projections coincide on all of $\Ind I$.
\end{proof}

From a minimal $C(X)$-projection $\phi$, we build the Choi--Effros algebra $\Cs(\phi)$,
as originally constructed in \cite[Theorem 3.1]{ChoiEffros} 
and described in the context of injective envelopes in \cite[Theorem 2.3]{HamanaCstar}.
This is done by equipping its range with the Choi--Effros product defined below,
turning the range of $\phi$ into a \Cs-algebra 
that is completely order isomorphic to the range of $\phi$ as an operator system.
In our setting, we have to verify that $\Cs(\phi)$ is indeed a \Galgebra{},
and that the ucp map $\Ind I \rightarrow \Cs(\phi)$ induced by $\phi$ is $\G$-equivariant.
\begin{proposition}
  For a $C(X)$-projection $\phi$ on $\Ind I$, 
  the Choi--Effros algebra $\Cs(\phi)$ can be given the structure of a \Galgebra{}
  such that the ucp map $\Ind I \rightarrow \Cs(\phi)$ induced by $\phi$ 
  is $\G$-equivariant.
  \label{prop:ChoiEffros}
\end{proposition}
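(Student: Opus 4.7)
The plan is to transfer the \Galgebra{} structure from $\Ind I$ to $\Cs(\phi)$ fibrewise, using that $\phi$ is $\G$-equivariant and $C(X)$-linear. Recall that, by the Choi--Effros construction, the range $\phi(\Ind I)$ becomes a \Cs-algebra with product $a \cdot_\phi b \defeq \phi(ab)$ and with norm and involution inherited from $\Ind I$, and $\phi$ viewed as a map $\Ind I \to \Cs(\phi)$ is a complete order isomorphism of operator systems onto its image.

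First, I equip $\Cs(\phi)$ with the structure of a $C(X)$-algebra. Since $\phi$ is $C(X)$-linear, for $f \in C(X)$ and $a \in \Cs(\phi) = \phi(\Ind I)$ one has $f \cdot_\phi a = \phi(fa) = f\phi(a) = fa$, so ordinary multiplication by elements of $C(X)$ preserves $\Cs(\phi)$ and agrees there with the Choi--Effros product. This exhibits $C(X)$ as acting by central multipliers, and non-degeneracy is automatic since $C(X)$ is unital. By Lemma \ref{lem:positiveIsFiberwise}, $\phi$ descends to ucp idempotents $\phi_u\colon (\Ind I)_u \to (\Ind I)_u$ for each $u \in X$, and I claim that the fibre of $\Cs(\phi)$ at $u$ is naturally isomorphic to the Choi--Effros algebra $\Cs(\phi_u)$. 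This reduces to showing that the ideal $\overline{C_0(X\setminus u) \cdot \Cs(\phi)}$ defining the fibre matches the image under $\phi$ of the corresponding ideal of $\Ind I$, which follows by chasing continuity of $\phi$ and its $C(X)$-linearity.

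For the $\G$-action, let $\alpha$ denote the $\G$-action on $\Ind I$. By $\G$-equivariance of $\phi$, each $\alpha_\gamma$ maps the range of $\phi_{s(\gamma)}$ into the range of $\phi_{r(\gamma)}$, and the calculation
\[
\alpha_\gamma(a \cdot_{\phi_{s(\gamma)}} b) = \alpha_\gamma(\phi_{s(\gamma)}(ab)) = \phi_{r(\gamma)}(\alpha_\gamma(a) \alpha_\gamma(b)) = \alpha_\gamma(a) \cdot_{\phi_{r(\gamma)}} \alpha_\gamma(b)
\]
shows that restriction yields a \starisomorphism{} $\Cs(\phi)_{s(\gamma)} \to \Cs(\phi)_{r(\gamma)}$, and the cocycle property is inherited from $\alpha$. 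Continuity of this restricted action on $\G \ast_s \Cs(\phi)$ follows from continuity of $\alpha$ on $\G \ast_s \Ind I$ together with the observation that the underlying bundle map $\Cs(\phi) \hookrightarrow \Ind I$ is just an inclusion of operator-system bundles. Finally, $\G$-equivariance of the induced ucp map $\Ind I \to \Cs(\phi)$ is immediate: as a map of sets it coincides with $\phi\colon \Ind I \to \Ind I$, whose $\G$-equivariance is part of the data of a $C(X)$-projection in Definition \ref{def:AProjection}.

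The main obstacle will be the fibre identification in the second paragraph: one must verify carefully that the quotient of $\Cs(\phi)$ by $\overline{C_0(X\setminus u) \cdot \Cs(\phi)}$ really is the Choi--Effros algebra of the fibrewise map $\phi_u$, since a priori the ideal at $u$ is defined using the Choi--Effros multiplication rather than the ambient one in $\Ind I$. Once that identification is in place, all remaining verifications reduce to pointwise calculations on the fibres that follow from the corresponding statements for $\alpha$ on $\Ind I$.
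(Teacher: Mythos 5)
Your proposal is correct and proceeds essentially as the paper does: both transfer the $C(X)$-structure and the $\G$-action from $\Ind I$ to the range of $\phi$, using $C(X)$-linearity and $\G$-equivariance of the projection (the paper phrases the action via bisections acting on suitably supported sections, which amounts to the same fibrewise restriction of the $\Ind I$-action that you describe, together with the same continuity argument). The fibre identification you flag as the main obstacle does close easily: since $\phi$ is an idempotent $C(X)$-map, one has $\phi(\Ind I)\cap I_u=\phi(I_u)$, whose closure equals $\overline{C_0(X\setminus u)\cdot \Cs(\phi)}$, so $\left( \Cs(\phi) \right)_u$ embeds isometrically into $\left( \Ind I \right)_u$ as the Choi--Effros algebra of $\phi_u$.
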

\begin{proof}
  First we check that $C(X)$ still acts by central multipliers on $\Cs(\phi)$, 
  giving it the structure of a $C(X)$-algebra.
  The algebra $\Cs(\phi)$ is the range of $\phi$ inside $\Ind I$ as underlying set,
  equipped with the norm of $\Ind I$ and multiplication ${\circ}$ given by $x \circ y = \phi(x y)$.
  As $\phi$ is a $C(X)$-map, 
  its range is closed under multiplication by $g \in C(X)$.
  Inheriting this action still gives an adjointable map that is its own adjoint, as
  \[
    x \circ \left( f.y \right)
    = \phi\left( x\left( f.y \right) \right)
    = \phi\left( \left( f.x \right)y \right)
    = \left( f.x \right) \circ y,
  \]
  for $xmy \in \Ind I$ and $f\in C(\tilde X)$.
  We conclude that the action of $C(\tilde X)$ on $\Ind I$
  via $y \mapsto f.y$ is by central multipliers.
  Hence, the surjective ucp map 
  $j\colon \Ind I \rightarrow \Cs(\phi)$ 
  induced by $\phi$ as $j(x) = \phi(x)$
  factors through the fibres to obtain maps 
  $j_u\colon \left( \Ind I \right)_u \rightarrow \left( \Cs(\phi) \right)_u$.
  For $j$ to be $\G$-equivariant, the $\G$-action on $\Cs(\phi)$ has to be given by
  \[
    \gamma.\left( j_u([f]_u) \right) = j_u\left( \gamma.[f]_u \right) 
    \qquad
    \text{for $f\in \Ind I$},
  \]
  where the action of $\gamma$ is the $\G$-action associated with the appropriate \Csalgebra{}.
  Picking an open bisection $B$ around $\gamma$ 
  and cutting down the section $f$ to be supported in $s(B)$ as above yields
  \[
    \gamma.\left( j_u([f]_u) \right) 
    = \left[ j\left( B.f \right) \right]_u 
    = \left[ B.j(f) \right]_v,
  \]
  where the action of $B$ on appropriately supported functions in $\Cs(\phi)$ 
  is as a subspace of $\Ind I$.
  Continuity of the action is shown 
  just as in Equation \eqref{eq:continuousActionByBisections}.
\end{proof}

The following arguments need almost no modification to those given in the work of Hamana.
First, we show that $\Cs(\phi)$ is a $\G$-rigid extension 
by noting that it carries a unique $C(X)$-seminorm
and repeating the arguments of Lemma \ref{lem:minimalProjection}.
Then we show that every rigid injective extension is also essential,
making it an injective envelope.
\begin{lemma}
  For $\phi$ a minimal $C(X)$-projection on $\Ind I$,
  the extension $C(X) \hookrightarrow \Cs(\phi)$ 
  is a $\G$-rigid, $\G$-injective $\G$-extension.
  \label{lem:rigidInjectiveExtension}
\end{lemma}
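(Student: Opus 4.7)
The plan is to verify $\G$-injectivity and $\G$-rigidity separately, adapting Hamana's strategy to the groupoid-equivariant setting.

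For $\G$-injectivity, I would exhibit $\Cs(\phi)$ as a retract of the $\G$-injective algebra $\Ind I$ in the category of $\G$-operator systems. Let $j\colon \Ind I \to \Cs(\phi)$ denote the surjective ucp map induced by $\phi$, which is $\G$-equivariant by Proposition~\ref{prop:ChoiEffros}, and let $\kappa\colon \Cs(\phi) \hookrightarrow \Ind I$ denote the operator-system inclusion; the latter is automatically $\G$-equivariant and $C(X)$-linear, since the action and $C(X)$-structure on $\Cs(\phi)$ are inherited from $\Ind I$. Given a ucp $\G$-map $\theta\colon V \to \Cs(\phi)$ from a $\G$-operator subsystem $V \subseteq W$, I compose with $\kappa$ and extend $\kappa\theta$ to a ucp $\G$-map $\Theta\colon W \to \Ind I$ using $\G$-injectivity of $\Ind I$. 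Then $j\Theta\colon W \to \Cs(\phi)$ is the desired extension, since $j\kappa = \mathrm{id}_{\Cs(\phi)}$ (because $\phi$ restricts to the identity on its range) forces $(j\Theta)|_V = j\kappa\theta = \theta$.

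For $\G$-rigidity, suppose $\psi\colon \Cs(\phi) \to \Cs(\phi)$ is a ucp $\G$-map restricting to the identity on $C(X)$. The key observation is that $\tilde\phi \defeq \kappa\circ\psi\circ\phi\colon \Ind I \to \Ind I$ is a $\G$-equivariant, $C(X)$-linear ucp map that fixes $C(X)$, and therefore defines a $C(X)$-seminorm $q(f) = \|\tilde\phi(f)\|$ in the sense of Definition~\ref{def:ASeminorm}. Contractivity of $\psi$ yields $q \leq p$, where $p(f) = \|\phi(f)\|$, so by minimality of $p$ the two coincide. I would then rerun the averaging argument of Lemma~\ref{lem:minimalProjection} for $\tilde\phi$ in place of $\phi$: the Ces\`aro averages $(\tilde\phi + \dots + \tilde\phi^n)/n$ have a point-\weakstar{} cluster point in the ambient von Neumann algebra $M$, and composing with the conditional expectation $E\colon M \to \Ind I$ and invoking minimality of $p$ forces $\tilde\phi$ to be idempotent, hence itself a $C(X)$-projection.

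To finish, I would use that $\phi$ fixes $\Cs(\phi)$ pointwise, so that $\phi\circ\kappa = \kappa$, and check by direct composition that $\phi\circ\tilde\phi = \tilde\phi = \tilde\phi\circ\phi$, i.e.\ $\tilde\phi \leq \phi$ as $C(X)$-projections on $\Ind I$. The minimality clause closing the proof of Lemma~\ref{lem:minimalProjection} then yields $\tilde\phi = \phi$, so $\kappa\psi\phi = \phi$; since every element of $\Cs(\phi)$ is of the form $\phi(f)$, this forces $\psi = \mathrm{id}_{\Cs(\phi)}$ and establishes rigidity. The main obstacle is chiefly bookkeeping: one must verify that the \weakstar-compactness apparatus of Lemmas~\ref{lem:lowerBound} and \ref{lem:minimalProjection} applies verbatim to the composite $\tilde\phi$, but since $\tilde\phi$ is by construction a $\G$-equivariant ucp map $\Ind I \to \Ind I$ fixing $C(X)$, which is exactly the setting of those lemmas, this should be essentially cosmetic.
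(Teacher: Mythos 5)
Your proposal is correct and follows essentially the paper's own route: $\G$-injectivity by exhibiting $\Cs(\phi)$ as a $\G$-equivariant retract of the $\G$-injective algebra $\Ind I$ via the maps $j$ and $\kappa$ (the paper phrases this as Choi--Effros plus $\G$-equivariance of $\phi$), and $\G$-rigidity from minimality of the $C(X)$-seminorm combined with the Ces\`aro-averaging and \weakstar{} limit machinery of Lemmas \ref{lem:lowerBound} and \ref{lem:minimalProjection}. The only organisational difference is that the paper first notes the \Cs-norm is the unique $C(X)$-seminorm on $\Cs(\phi)$ and runs the averaging for $\psi$ intrinsically there, whereas you transport $\psi$ back to $\Ind I$ as $\kappa\circ\psi\circ\phi$, show it is a $C(X)$-projection dominated by $\phi$, and invoke the minimal-projection uniqueness clause; both variants are sound.
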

\begin{proof}
  By \cite[Thm 3.1]{ChoiEffros}, $\Cs(\phi)$ is injective 
  and even $\G$-injective, as $\phi$ is a $\G$-map.

  From any $C(X)$-seminorm on $\Cs(\phi)$ given by a ucp $\G$-map $\phi'$
  we obtain a $C(X)$-seminorm on $\Ind I$ given by $\phi'\circ\phi$.
  As it is then dominated by the minimal $C(X)$-seminorm of $\phi$,
  the \Cs-norm on $\Cs(\phi)$ is its unique $C(X)$-seminorm.
  Let $\psi\colon \Cs(\phi) \rightarrow \Cs(\phi)$ be
  a ucp $\G$-map restricting to the identity on $C(X)$.
  Therefore, a minimal $C(X)$-seminorm on $\Cs(\phi)$
  is induced by the identity
  and, analogously to the proof of Lemma \ref{lem:minimalProjection},
  for $a\in \Cs(\phi)$ we obtain that
  \[
    \limsup\left\| \left( \psi(a) + \psi^2(a) + \ldots + \psi^n(a) \right)/n \right\|
    =
    \limsup\|\psi_n(a)\| = \|a\|
  \]
  and hence 
  \[
    \|\psi(a) - a\|
    = \limsup \| \psi_n\left( \psi(a) - a \right)\|
    = 0.
  \]
  Therefore $\psi$ is the identity.
\end{proof}
\begin{lemma}
  Every $\G$-injective $\G$-rigid extension is also $\G$-essential.
  \label{lem:rigidInjectiveImpliesEssential}
\end{lemma}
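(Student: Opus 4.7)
The plan is to construct a ucp $\G$-equivariant left inverse $\psi \colon C \to B$ for $\phi$ using $\G$-injectivity of $B$, and then invoke $\G$-rigidity of $(B, \iota)$ to promote it to a genuine inverse, thereby forcing $\phi$ to be injective.

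In detail, let $\phi \colon B \to C$ be a ucp $\G$-map into another \Galgebra{} $C$ such that $\phi \circ \iota$ is injective on $A$. First, I would promote injectivity of $\phi \circ \iota$ to the statement that $\phi \circ \iota \colon A \to C$ is a (complete) order embedding, so that $A$ is realised via $\phi\circ\iota$ as a sub-$\G$-operator system of $C$. This exploits the $C^*$-structure of $A$: combined with the Kadison--Schwarz inequality, one analyses the multiplicative domain of $\phi$ restricted to the $C^*$-subalgebra $\iota(A) \subseteq B$ (or passes to a Stinespring dilation) to conclude that $\iota(A)$ lies in the multiplicative domain of $\phi$, so that $\phi \circ \iota$ is in fact a $*$-monomorphism into $C$.

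Given this, the ucp $\G$-equivariant map $\iota \circ (\phi \circ \iota)^{-1} \colon \phi(\iota(A)) \to B$ is well-defined on the $\G$-operator subsystem $\phi(\iota(A)) \subseteq C$. By $\G$-injectivity of $B$, it extends along the inclusion $\phi(\iota(A)) \hookrightarrow C$ to a ucp $\G$-map $\psi \colon C \to B$ satisfying $\psi \circ \phi \circ \iota = \iota$. The composition $\psi \circ \phi \colon B \to B$ is then a ucp $\G$-map that restricts to $\iota$ on $A$, so $\G$-rigidity of $(B, \iota)$ forces $\psi \circ \phi = \mathrm{id}_B$. In particular, $\phi$ admits a left inverse and must be injective.

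The main obstacle is the first step: passing from mere linear injectivity of $\phi \circ \iota$ to a complete order embedding. This is the technical heart of the argument, as it is precisely what is needed to legitimately apply the extension property of $\G$-injectivity along $\phi \circ \iota$. Handling this cleanly in the $\G$-equivariant, $C(\Gzero)$-linear setting (where positivity and completeness statements must be checked fibrewise, as in Lemma \ref{lem:positiveIsFiberwise}) is the only point that requires genuine care; the remainder of the argument is a formal consequence of injectivity and rigidity in the style of Hamana.
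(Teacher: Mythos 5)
Your overall skeleton is exactly the paper's argument: use $\G$-injectivity of the extension to produce a ucp $\G$-map $\psi$ from the target back into the extension with $\psi\circ\phi\circ\iota=\iota$, then invoke $\G$-rigidity to conclude $\psi\circ\phi=\mathrm{id}$, so that $\phi$ is injective. That part of your proposal is fine and coincides with the paper's proof.

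The problem is the step you yourself single out as the technical heart. It is false that an injective ucp map on a \Csalgebra{} must contain that algebra in its multiplicative domain, or even be a (complete) order embedding: Kadison--Schwarz and Stinespring only give the inequality $\phi(a)^\ast\phi(a)\leq\phi(a^\ast a)$, and injectivity contributes nothing towards equality. Concretely, take $A=\Cpx^2$ and the ucp map $\phi(a_1,a_2)=\bigl((a_1+a_2)/2,\,a_2\bigr)$ on $\Cpx^2$: it is unital, completely positive and injective, yet $\phi\bigl((-1,1)\bigr)=(0,1)\geq 0$ while $(-1,1)\not\geq 0$, and $(1,0)$ fails the multiplicative domain condition. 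So your first step cannot be carried out from bare injectivity of $\phi\circ\iota$, and with it the justification for the ucp map $\iota\circ(\phi\circ\iota)^{-1}$ on $\phi(\iota(A))$ collapses. The paper's proof sidesteps the issue by simply viewing $A$ as sitting inside the target ``via the embedding $\phi\circ\iota$'' --- in Hamana's original setting essentiality is formulated with complete order injections rather than mere injectivity, and under that stronger reading of the hypothesis your remaining argument is exactly what is needed; likewise, in the one place the lemma is applied (Theorem \ref{thm:uniqueInjectiveEnvelope}) the relevant composition is an honest \star-embedding of $C(X)$, so the stronger hypothesis is available there. But as a device for upgrading mere injectivity, the multiplicative-domain argument you propose does not work, so as written your proof has a genuine gap at its central step.
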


\begin{proof}
  The proof is almost abstract nonsense as in \cite[Lemma 3.7]{HamanaOperatorSystems}.

  Let $\left( I, \iota \right)$ be a $\G$-injective $\G$-extension of $A$
  and $\phi\colon I \rightarrow B$ a ucp $\G$-map 
  such that $\phi\circ \iota\colon A\rightarrow B$ is injective.
  We have to show that $\phi$ itself is already injective.
  By $\G$-injectivity of $I$
  we find a ucp $\G$-map $\psi\colon B \rightarrow I$
  that restricts to $\iota$ when $A$ is seen as a subalgebra of $B$
  via the embedding $\phi \circ \iota$, as in Diagram \eqref{cd:injectiveRigidImpliesEssential}:
  \begin{equation}
    \begin{tikzcd}
      B
      \ar[r, "\psi"above, dashed]
      & I
      \\
      A
      \ar[u, "\phi \circ \iota"left, hookrightarrow]
      \ar[ur, "\iota"below right]
    \end{tikzcd}
    \label{cd:injectiveRigidImpliesEssential}
  \end{equation}
  Now $\psi\circ \phi$ is a ucp $\G$-selfmap of $I$
  that restricts to the identity on $\iota(A)$. 
  By $\G$-rigidity it is therefore the identity,
  implying that $\phi$ is injective.
\end{proof}
Finally, we may remark that this injective envelope is unique 
up to $\G$-equivariant isomorphism preserving the embedding:
\begin{theorem}
  For an \etale{} groupoid $\G$ with compact unit space $X$
  the \Galgebra{} $C\left( X \right)$ has a $\G$-equivariant injective envelope 
  $I_{\G}\left( C\left( X \right) \right)$,
  such that for any other $\G$-equivariant injective envelope $Z$ 
  there is a unique $\G$-isomorphism 
$\psi\colon I_{\G}\left( C\left( X \right) \right) \rightarrow Z$ for which
  \begin{equation}\label{cd:uniqueInjectiveEnvelope}
    \begin{tikzcd}
      I_{\G}\left( C\left( X \right) \right)
      \ar[r, rightarrow, "\psi" above]
      &Z\\
      C\left( X \right)
      \ar[u, hookrightarrow]
      \ar[ur, hookrightarrow]
    \end{tikzcd}
  \end{equation}
  commutes.
  We call the spectrum of $I_\G(C(X))$ the Furstenberg boundary of $\G$.
  \label{thm:uniqueInjectiveEnvelope}
\end{theorem}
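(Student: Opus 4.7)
My plan is to assemble existence from the lemmas of Section~\ref{sec:envelope} and then to run a standard Hamana-style argument for uniqueness. For existence, I would set $I_\G(C(X)) \defeq \Cs(\phi)$, where $\phi$ is the minimal $C(X)$-projection on $\Ind I$ (with $I = C(\tilde X)$ the non-dynamic injective envelope of $C(X)$) produced via Zorn's lemma together with Lemmas~\ref{lem:lowerBound} and~\ref{lem:minimalProjection}. Proposition~\ref{prop:ChoiEffros} equips $\Cs(\phi)$ with the structure of a \Galgebra{} containing $C(X)$; Lemma~\ref{lem:rigidInjectiveExtension} ensures the extension is $\G$-injective and $\G$-rigid; and Lemma~\ref{lem:rigidInjectiveImpliesEssential} supplies $\G$-essentiality, so $I_\G(C(X))$ is a $\G$-injective envelope in the sense of Definition~\ref{def:injectiveEnvelope}.

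For uniqueness, let $(Z, j)$ be any second $\G$-injective envelope and write $\iota\colon C(X) \hookrightarrow I_\G(C(X))$ for the canonical embedding. I would first use $\G$-injectivity of $Z$ to extend $j$ along $\iota$ to a ucp $\G$-map $\psi\colon I_\G(C(X)) \to Z$, and symmetrically produce a ucp $\G$-map $\psi'\colon Z \to I_\G(C(X))$ extending $\iota$ along $j$. The composition $\psi' \circ \psi$ is a ucp $\G$-selfmap of $I_\G(C(X))$ restricting to the identity on $\iota(C(X))$, so the $\G$-rigidity of $I_\G(C(X))$ forces $\psi' \circ \psi = \textup{id}$. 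In particular $\psi'$ is surjective; and since $\psi' \circ j = \iota$ is injective, the $\G$-essentiality of $Z$ makes $\psi'$ injective as well, so $\psi'$ is a ucp $\G$-bijection with two-sided inverse $\psi$.

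The step I expect to require the most care is promoting these mutually inverse ucp bijections to honest \starisomorphism{}s, since the Choi--Effros product on $I_\G(C(X)) = \Cs(\phi)$ is not the one inherited from $\Ind I$ and the product on $Z$ is a priori independent of either. The standard route is to apply the Kadison--Schwarz inequality twice to the relation $\psi' \circ \psi = \textup{id}$, which for every $a \in I_\G(C(X))$ yields
\[
  a\star a \;\leq\; \psi'\bigl(\psi(a)\star\psi(a)\bigr) \;\leq\; \psi'\bigl(\psi(a\star a)\bigr) \;=\; a\star a;
\]
equality throughout and injectivity of $\psi'$ then force $\psi(a\star a) = \psi(a)\star\psi(a)$, so $\psi$ is a \starhomomorphism{} and, being bijective, a \starisomorphism{}, with $\psi' = \psi^{-1}$ one as well. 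Uniqueness of the intertwiner in Diagram~\eqref{cd:uniqueInjectiveEnvelope} then follows once more from $\G$-rigidity: any second candidate $\tilde\psi$ satisfies $\psi' \circ \tilde\psi = \textup{id} = \psi' \circ \psi$, and bijectivity of $\psi'$ gives $\tilde\psi = \psi$.
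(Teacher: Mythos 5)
Your proposal is correct and takes essentially the same route as the paper: $\G$-injectivity of each envelope yields the two ucp $\G$-maps, $\G$-rigidity of $I_\G(C(X)) = \Cs(\phi)$ forces the composite to restrict to the identity, and $\G$-essentiality of $Z$ gives injectivity, hence a $\G$-isomorphism. You additionally make explicit two points the paper leaves implicit (deferring to Hamana): the Kadison--Schwarz/multiplicative-domain argument upgrading the mutually inverse ucp bijections to \starisomorphism{}s, and the uniqueness of $\psi$ via rigidity.
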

  This works as for Hamana in \cite[Lemma 3.8]{HamanaCstar}:
\begin{proof}
  Let $I_\G(C(X))$ be the $\G$-injective $\G$-rigid extension above and
  let $Z$ be another $\G$-injective $\G$-essential ectension.
  We may extend the inclusions of $C\left( X \right)$ into one of the envelopes
  to the other envelope as in Diagram \eqref{cd:uniqueEnvelope}
  by $\G$-injectivity:
  \begin{equation}
    \begin{tikzcd}
      I_\G\left( C\left( X \right) \right)
      \ar[r, dashed, "\psi"above]     
      & Z
      \\
      C\left( X \right)
      \ar[u, hookrightarrow]
      \ar[ur, hookrightarrow]
    \end{tikzcd}
    \qquad
    \begin{tikzcd}
      I_\G\left( C\left( X \right) \right)
      &Z
      \ar[l, dashed, "\hat\psi"above]
      \\
      &C\left( X \right)
      \ar[u, hookrightarrow]
      \ar[ul, hookrightarrow]
    \end{tikzcd}
    \label{cd:uniqueEnvelope}
  \end{equation}
  As $\hat\psi\circ\psi$ restricts to the identity on $C\left( X \right)$,
  it is the identity on $I_\G(C(X))$ by $\G$-rigidity, 
  and in particulat $\hat \psi$ is surjective.
  Furthermore, $\hat \psi$ is injective by $\G$-essentiality of $Z$  
  and hence a $\G$-isomorphism.
\end{proof}

\section{Some Properties of Groupoid Boundaries}\label{sec:properties}
Consider a locally compact Hausdorff \etale{} groupoid $\G$
with compact unit space $X$ as above.
Denote by $\tilde X$ its Furstenberg boundary,
the spectrum of the $\G$-equivariant injective envelope $I_\G\left( C(X) \right)$
of $C(X)$.
Passing to the crossed product groupoid $\tilde \G\defeq \tilde X \rtimes \G$
associated with the action of $\G$ on $C(\tilde X)$,
we treat the boundary as a second, larger groupoid $\tilde \G$
that contains $\G$ as a quotient.
We call $\tilde \G$ the \emph{boundary groupoid} of $\G$.

We explore some properties that make $\tilde \G$ more tractable.
The following propositions are generalisations of \cite[Proposition 1.3]{Kawabe}.
\begin{proposition}
  All stabiliser groups of the boundary groupoid $\tilde \G$ 
  of a Hausdorff \etale{} groupoid $\G$ are amenable.
  \label{prop:amenableStabilisers}
\end{proposition}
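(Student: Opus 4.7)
My plan is to adapt the Kalantar--Kennedy argument \cite{KennedyBoundaries} from the group setting. Fix $\tilde x \in \tilde X$ with $u \defeq q(\tilde x) \in X$ and let $H \defeq \{\gamma \in \G_u^u : \gamma.\tilde x = \tilde x\}$ denote the isotropy of $\tilde \G$ at $\tilde x$. To show $H$ is amenable it suffices to exhibit an $H$-invariant mean on $\ell^\infty(H)$ with the left-translation action.

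I first produce an $H$-invariant state on a large auxiliary \Galgebra{}. Consider $M \defeq \ell^\infty(\G)$, the \Galgebra{} of bounded, not necessarily continuous sections of $\G \ast_s X$ introduced in Remark \ref{rem:boundedSectionsGAlgebra}. The range-pullback $f \mapsto f \circ r$ gives a $\G$-equivariant unital embedding $C(X) \hookrightarrow M$. By $\G$-injectivity of $I_\G(C(X))$, the inclusion $C(X) \hookrightarrow I_\G(C(X))$ extends along this embedding to a $\G$-equivariant ucp map $\Phi \colon M \to I_\G(C(X))$. Point-evaluation $\mathrm{ev}_{\tilde x}$ is $H$-invariant on $I_\G(C(X))$ because $H$ fixes $\tilde x$, and it factors through the fibre $I_\G(C(X))_u = C(q^{-1}(u))$; hence $m \defeq \mathrm{ev}_{\tilde x} \circ \Phi$ is an $H$-invariant state on $M$ which factors through the $u$-fibre $M_u$.

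Next I transfer $m$ to a mean on $\ell^\infty(H)$ by a transversal decomposition. The group $H$ acts freely on $\G^u$ by left multiplication, so I fix a transversal $T \subseteq \G^u$ containing $u$ and write $\G^u = \bigsqcup_{\eta \in T} H\eta$. For $f \in \ell^\infty(H)$ set $\tilde f(h\eta) \defeq f(h)$ on $\G^u$ and extend to a bounded section $\iota(f) \in M$. The computation $(B.g)(\eta) = g(h^{-1}\eta)$ for $\eta \in \G^u$ and a bisection $B \ni h$, carried out in the proof of Proposition \ref{prop:inductionAction}, shows that $\iota$ is unital and $H$-equivariant once one passes to the quotient $M_u$. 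Consequently $m \circ \iota$ is an $H$-invariant mean on $\ell^\infty(H)$, proving that $H$ is amenable.

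The main obstacle is the last step: the extension of $\tilde f$ off $\G^u$ must be chosen carefully so that $\iota$ genuinely descends to an $H$-equivariant unital map on $M_u$. This relies on the locality principle of Lemma \ref{lem:inducedFibersAreLocal} — only values near $\G^u$ affect the class in $M_u$ — together with a direct check that the $\G$-action discrepancies off $\G^u$ are absorbed by the ideal $I_u$.
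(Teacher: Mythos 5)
Your overall route is the paper's: a large $\ell^\infty$-type \Galgebra{} of functions on $\G$ fibred over $X$ via $r$, an extension $C(X)\hookrightarrow C(\tilde X)$ through it by $\G$-injectivity, passage to the fibre at $u$, a transversal embedding of $\ell^\infty(H)$ using the free left action of $H$ on $\G^u$, and evaluation at $\tilde x$. The paper's proof of Proposition \ref{prop:amenableStabilisers} does exactly this, except that it replaces your $M=\ell^\infty(\G)$ by the subalgebra $L=\{f\in\ell^\infty(\G)\mid x\mapsto\|f|_{\G^x}\|_\infty \text{ upper-semicontinuous}\}$, and that choice is not cosmetic: it is precisely what closes the step you flag as ``the main obstacle'', and your proposal does not close it.

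The gap is this. For $M=\ell^\infty(\G)$ the fibre $M_u=M/I_u$ is \emph{not} $\ell^\infty(\G^u)$: the class $[f]_u$ remembers the behaviour of $f$ on $r^{-1}(V)$ for every neighbourhood $V$ of $u$, not just on $\G^u$. Lemma \ref{lem:inducedFibersAreLocal} gives locality with respect to such neighbourhoods, not with respect to $\G^u$ itself, so it does not provide a canonical map $\ell^\infty(H)\to M_u$ from data on $\G^u$ alone, and ``discrepancies off $\G^u$'' are in general \emph{not} absorbed by $I_u$. Concretely, if $u$ is not isolated in $X$ then $\|[\,1-\mathds{1}_{\G^u}]_u\|_{M_u}=1$; hence with the zero-extension your $\iota$ is $H$-equivariant but not unital, and the invariant functional $m\circ\iota$ has total mass $m([\mathds{1}_{\G^u}]_u)$, which you cannot bound away from $0$ (a ucp extension $\Phi$ gives no control on the value of $\Phi(\mathds{1}_{\G^u})$ at $\tilde x$), so you may not obtain a mean at all. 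Conversely, any extension chosen to be unital (say, constant off $\G^u$) destroys equivariance, because the resulting discrepancies live off $\G^u$ but inside every $r^{-1}(V)$ and thus survive in $M_u$. The repair is the paper's: impose upper semicontinuity of $x\mapsto\|f|_{\G^x}\|_\infty$, so that restriction to $\G^u$ identifies $L_u\cong\ell^\infty(\G^u)$ isometrically; then the transversal map $\ell^\infty(H)\to\ell^\infty(\G^u)$ is automatically well defined, unital, positive and $H$-equivariant, and composing with $\phi_u$ and $\mathrm{ev}_{\tilde x}$ yields the invariant mean. With that substitution your argument coincides with the paper's proof.
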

\begin{proof}
  Consider the \Galgebra{} 
  $L = \left\{ f \in \ell^\infty(\G)
  \mid x \mapsto \| f|_{\G^x} \|_\infty 
  \text{ is upper-semicontinuous}\right\}$ 
  with left action of $\G$ on the argument.
  Identifying $C(X)$ as a unital sub-\Galgebra{}, 
  we find a ucp $\G$-map $\phi \colon L \rightarrow C(\tilde X)$ by $\G$-injectivity.
  Passing to the fibre at any $x \in X$, 
  we obtain a $\G_x^x$-equivariant ucp map
  $\phi_x \colon \ell^\infty(\G^x) \rightarrow C(q^{-1}(x))$.
  For any $\tilde x \in q^{-1}(x)$, we may identify the stabiliser group
  $\tilde\G_{\tilde x}^{\tilde x}$ of $\tilde \G$ at $\tilde x$
  with a subgroup of $\G_x^x$
  and obtain a $\tilde\G_{\tilde x}^{\tilde x}$-equivariant unital
  \starhomomorphism{}
  $\ell^\infty(\tilde \G_{\tilde x}^{\tilde x}) \rightarrow
  \ell^\infty(\G^x)$
  since each orbit of $_{\tilde\G_{\tilde x}^{\tilde x}} \backslash^{\G^x}$ is in bijection
  with $\tilde\G_{\tilde x}^{\tilde x}$.
  Then, composition with the evaluation $\text{ev}_{\tilde x}$ at $\tilde x$, 
  \[
    \ell^\infty(\tilde\G_{\tilde x}^{\tilde x})
    \rightarrow
    \ell^\infty(\G^x)
    \xrightarrow{\phi_x}
    C(q^{-1}(x))
    \xrightarrow{\text{ev}_{\tilde x}}
    \Cpx
  \]
  gives a state on $ \ell^\infty(\tilde\G_{\tilde x}^{\tilde x})$ that is
  invariant under the left action of $\tilde\G_{\tilde x}^{\tilde x}$.
\end{proof}

Let $q\colon \tilde X \rightarrow X$
be the quotient map
obtained from the embedding $C(X) \hookrightarrow C(\tilde X)$.
\begin{proposition}
  Let $\tilde X$ be the Furstenberg boundary of a groupoid $\G$ with compact unit space $X$
  and $q\colon \tilde X \rightarrow X$ the continuous surjection
  obtained from the inclusion $C(X) \hookrightarrow C(\tilde X)$.
  Let $Z\subseteq \tilde X$ be a closed $\G$-invariant subset such that $q(Z) = X$. 
  Then $Z = \tilde X$.
  \label{prop:equivariantCover}
\end{proposition}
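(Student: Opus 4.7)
The plan is to deduce this from $\G$-essentiality of the Furstenberg boundary, using the restriction homomorphism $C(\tilde X) \to C(Z)$ as the test map.

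First, I would set up the relevant objects. Since $Z \subseteq \tilde X$ is closed, $C(Z)$ is a unital commutative \Csalgebra{}; since $Z$ is $\G$-invariant under the action on $\tilde X$ induced by the crossed-product description, the restriction of the $\G$-action yields a $\G$-action on $C(Z)$, making it a \Galgebra{} over $X$ (the $C(X)$-structure on $C(Z)$ comes via $q|_Z \colon Z \to X$, which is well-defined because $Z$ lives over $X$). The restriction map $\phi \colon C(\tilde X) \to C(Z)$, $f \mapsto f|_Z$, is a surjective, unital $*$-homomorphism, and is $\G$-equivariant since the action on $Z$ is the restricted action and it is a $C(X)$-map since the $C(X)$-structure on both sides comes from the same composition with $q$.

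Second, I would combine with the inclusion $\iota \colon C(X) \hookrightarrow C(\tilde X)$ coming from $q \colon \tilde X \to X$. The composition $\phi \circ \iota \colon C(X) \to C(Z)$ sends $f$ to $f \circ q|_Z$. This map is injective exactly because $q(Z) = X$: if $f \circ q|_Z = 0$ then for every $x \in X$ we pick $z \in Z$ with $q(z) = x$ and conclude $f(x) = 0$.

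Third, I would invoke essentiality. By Theorem~\ref{thm:uniqueInjectiveEnvelope}, $C(\tilde X) = I_\G(C(X))$ is a $\G$-injective envelope, and by Lemma~\ref{lem:rigidInjectiveImpliesEssential}, the extension $C(X) \hookrightarrow C(\tilde X)$ is $\G$-essential. Applying this to the ucp $\G$-map $\phi$, the injectivity of $\phi \circ \iota$ forces $\phi$ itself to be injective. But the kernel of the restriction $\phi$ is the ideal of continuous functions on $\tilde X$ vanishing on $Z$; for this ideal to be trivial, $Z$ must be dense in $\tilde X$, and since $Z$ is closed, $Z = \tilde X$.

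The reasoning is short and the main subtlety is not a hard estimate but a bookkeeping point: verifying that $C(Z)$ genuinely qualifies as a \Galgebra{} in the sense required for essentiality, i.e., that the fibred groupoid action restricts sensibly to $Z$ and that $\phi$ respects the $C(X)$-structure and the $\G$-action in the fibred sense developed in Sections~\ref{sec:preliminaries}--\ref{sec:induction}. Once this is checked, essentiality of the boundary does the rest of the work.
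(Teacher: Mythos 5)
Your proof is correct and follows essentially the same route as the paper: the paper extends the inclusion $C(X)\hookrightarrow C(\tilde X)$ through $C(X)\hookrightarrow C(Z)$ by $\G$-injectivity and then applies $\G$-rigidity to conclude that $\mathrm{res}_Z$ is injective, which is exactly the argument packaged in Lemma~\ref{lem:rigidInjectiveImpliesEssential} that you invoke as $\G$-essentiality. The only point to keep in mind (which you flag yourself, and which the paper also uses tacitly) is that $C(Z)$ must be checked to be a \Galgebra{} with $\mathrm{res}_Z$ a ucp $\G$-map, after which injectivity of $\mathrm{res}_Z$ plus closedness of $Z$ gives $Z=\tilde X$.
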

\begin{proof}
  As $q$ restricted to $Z$ is still surjective, we can embed $C(X)$ into $C(Z)$ 
  and, by $\G$-injectivity of $C(\tilde X)$,
  extend the embedding $C(X) \hookrightarrow C(\tilde X)$ 
  to a ucp map $\phi\colon C(Z) \rightarrow C(\tilde X)$.
  Then the composition
  with restriction to $Z$ 
  \[ C(\tilde X) \xrightarrow{\text{res}_Z} C(Z) \xrightarrow \phi C(\tilde X) \]
  is a $\G$-equivariant ucp map $C(\tilde X) \rightarrow C(\tilde X)$ 
  that restricts to the identity on the subalgebra $C(X)$.
  By rigidity of the $\G$-equivariant injective envelope, 
  it is therefore the identity on all of $C(\tilde X)$.
  Hence, $\text{res}_Z$ is injective and $Z$ is dense. 
  As it is also closed, we get $Z=\tilde X$.
\end{proof}

\begin{proposition}
  Any boundary groupoid $\tilde \G$ is ample.
  \label{prop:ample}
\end{proposition}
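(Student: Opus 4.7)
The plan is to verify the two defining requirements of ampleness separately: that $\tilde\G$ is \etale{} and that its unit space $\tilde X$ is totally disconnected. The first is essentially automatic, since $\tilde \G = \tilde X \rtimes \G$ is the transformation groupoid of an action of the \etale{} groupoid $\G$. The source map $\tilde s\colon \tilde\G \rightarrow \tilde X$ is the pullback of $s\colon \G \rightarrow X$ along the continuous surjection $q\colon \tilde X \rightarrow X$, and analogously for the range; since pullbacks of local homeomorphisms are local homeomorphisms, $\tilde\G$ is \etale{}.

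For the totally disconnected part, I would prove the stronger statement that the commutative \Cs-algebra $I_\G\left( C(X) \right) = C(\tilde X)$ is injective as an operator system, disregarding the $\G$-action. This suffices, because a commutative \Cs-algebra is an injective operator system if and only if its Gelfand spectrum is Stonean, and Stonean spaces are in particular totally disconnected. The key observation is already implicit in the construction: the auxiliary algebra $M$ used in the proof of Lemma~\ref{lem:lowerBound} is isomorphic to $\ell^\infty$ on a set, and is therefore an injective operator system.

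In the proof of Lemma~\ref{lem:lowerBound} we already lifted the identity on $\Ind I$ along the inclusion $\kappa\colon \Ind I \hookrightarrow M$ to a ucp map $E\colon M \rightarrow \Ind I$, using the $\G$-injectivity of $\Ind I$. This exhibits $\Ind I$ as a ucp retract of $M$, and the standard fact that ucp retracts of injective operator systems remain injective shows that $\Ind I$ is itself injective as an operator system. Applying the same principle to the minimal $C(X)$-projection $\phi\colon \Ind I \rightarrow \Ind I$, whose image is $I_\G(C(X)) = \Cs(\phi)$, then yields the desired non-equivariant injectivity of $I_\G(C(X))$.

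The main conceptual point, rather than any serious technical obstacle, is recognising that the ambient algebra $M$ constructed earlier for taking \weakstar-limits doubles as a witness for non-equivariant injectivity of $\Ind I$. Once this is noted, the remainder is an assembly of standard principles: pullbacks of local homeomorphisms stay local homeomorphisms, ucp retracts preserve operator-system injectivity, and commutative injective operator systems have Stonean, hence totally disconnected, spectra.
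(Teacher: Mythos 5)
Your proof is correct and takes essentially the same route as the paper, which likewise deduces that $\tilde X$ is Stonean (hence totally disconnected) from injectivity of the commutative \Csalgebra{} $C(\tilde X)$ and then invokes that an \etale{} groupoid with totally disconnected unit space is ample. The only difference is that you spell out what the paper leaves implicit, namely the non-equivariant injectivity of $I_\G(C(X))$ via the ucp retractions $E\colon M\rightarrow \Ind I$ and $\phi$, and the \etale{}ness of the transformation groupoid $\tilde X\rtimes \G$; both of these details are sound.
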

\begin{proof}
  As $C(\tilde X)$ is an injective \Csalgebra{}, $\tilde X$ is Stonean
  and in particular zero-dimensional.
  Any \etale{} groupoid with totally disconnected unit space is ample.
\end{proof}

\begin{proposition}
  If every orbit of a boundary groupoid $\tilde \G$ has at least two points,
  then the isotropy bundle $\text{Iso}( \tilde \G )$ of $\tilde \G$ is clopen.
  \label{prop:clopenIsotropy}
\end{proposition}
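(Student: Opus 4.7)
The plan is to establish closedness and openness of $\text{Iso}(\tilde\G)$ separately. Closedness is automatic: since $\tilde X$ is Hausdorff, the diagonal $\Delta_{\tilde X}\subseteq \tilde X\times \tilde X$ is closed, and $\text{Iso}(\tilde\G) = (s,r)^{-1}(\Delta_{\tilde X})$ is therefore closed in $\tilde\G$. The substance of the proof lies in openness.

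To prove openness, I fix $\gamma_0\in \text{Iso}(\tilde\G)$ with $\tilde x_0 := s(\gamma_0) = r(\gamma_0)$. Using that $\tilde\G$ is ample (Proposition \ref{prop:ample}), I pick a clopen bisection $B \ni \gamma_0$ and let $\sigma_B \colon s(B)\to r(B)$ be the induced homeomorphism. A clopen sub-bisection of $B$ containing $\gamma_0$ lies in $\text{Iso}(\tilde\G)$ exactly when $\sigma_B$ fixes its source pointwise, so it suffices to show that the closed set
\[
F_B := \{\tilde x\in s(B)\cap r(B) : \sigma_B(\tilde x) = \tilde x\}
\]
is a neighbourhood of $\tilde x_0$. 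Suppose for contradiction that $\tilde x_0$ is not interior to $F_B$. Then $\tilde x_0\in \overline{U_B}$, where $U_B := (s(B)\cap r(B))\setminus F_B$ is the open set on which $\sigma_B$ moves points. Since $\tilde X$ is Stonean (Proposition \ref{prop:ample}), $\overline{U_B}$ is clopen; moreover $\sigma_B(U_B) = U_B$, so $\sigma_B$ restricts to a homeomorphism of $\overline{U_B}$ whose fixed points lie in the nowhere-dense closed subset $\overline{U_B}\setminus U_B$, which still contains $\tilde x_0$.

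At this point the no-global-fixed-point assumption enters. I pick $\gamma_1\in \tilde\G$ with $s(\gamma_1) = \tilde x_0$ and $r(\gamma_1) =: \tilde y_0 \neq \tilde x_0$, together with a clopen bisection $B_1 \ni \gamma_1$ whose source and range are disjoint clopen subsets of $\tilde X$ (possible by Hausdorff-ness and ample-ness). Using the pair $(B, B_1)$ I aim to construct a $\tilde\G$-equivariant $C(X)$-linear ucp self-map $\phi$ of $C(\tilde X)$ which restricts to the identity on $C(X)$ but differs from the identity on a function concentrated near $\tilde x_0$ in $\overline{U_B}$, thereby contradicting the $\G$-rigidity of the injective envelope established in Lemma \ref{lem:rigidInjectiveExtension}.

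The main obstacle is the explicit construction of $\phi$. A direct averaging $f\mapsto (f + f\circ\sigma_B)/2$ on $\overline{U_B}$ and identity elsewhere provides a local modification of the identity which is ucp and continuous (since $\sigma_B$ is the identity on $\overline{U_B}\setminus U_B$), but it fails $C(X)$-linearity whenever $\sigma_B$ moves between fibres of $q\colon \tilde X\to X$, and it is in general not $\tilde\G$-equivariant. The role of $\gamma_1$ and of $\sigma_{B_1}$ is to $\tilde\G$-equivariantly propagate the construction along the orbit of $\tilde x_0$; the hypothesis $r(\gamma_1)\neq \tilde x_0$ is precisely what guarantees that this symmetrisation does not cancel the local perturbation, which is the decisive use of the no-global-fixed-point assumption.
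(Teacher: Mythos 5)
There is a genuine gap: your argument is never completed at its decisive step. The contradiction you aim for requires an explicit $\G$-equivariant, $C(X)$-linear ucp self-map $\phi$ of $C(\tilde X)$ fixing $C(X)$ but differing from the identity, and you concede that the only concrete candidate (averaging $f$ with $f\circ\sigma_B$ near $\tilde x_0$) fails both $C(X)$-linearity and equivariance, offering only a hope that $\gamma_1$ and $B_1$ can ``propagate'' the construction equivariantly. No such map is constructed, and building one is not routine: equivariance is a condition over the whole groupoid, and a local perturbation at $\tilde x_0$ must be transported coherently along the entire (typically infinite) orbit while remaining well defined, continuous and $C(X)$-linear; nothing in the proposal indicates how to do this. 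In addition, two intermediate claims are unjustified or false in general: $\sigma_B(U_B)=U_B$ need not hold (for $\tilde x\in U_B$ the point $\sigma_B(\tilde x)$ may leave $s(B)\cap r(B)$, or $\sigma_B$ may not even be defined at it), so $\sigma_B$ need not restrict to a homeomorphism of $\overline{U_B}$, and the fixed-point set of a merely \emph{partial} homeomorphism of a Stonean space need not be open, so one cannot argue as if a Frolik-type statement applied to $\sigma_B$ on $s(B)\cap r(B)$.

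The missing idea is precisely how the paper uses the hypothesis that every orbit has at least two points: by a result of Brix--Scarparo, an ample groupoid with this property admits a \emph{full} open bisection $B$ through any given $\gamma\in\text{Iso}(\tilde\G)$, i.e.\ one with $s(B)=r(B)=\tilde X$. The induced map is then a genuine homeomorphism of the Stonean space $\tilde X$, Frolik's theorem gives that its fixed-point set $F$ is open, and $B\cap s^{-1}(F)$ is an open neighbourhood of $\gamma$ inside the isotropy. In your proposal the two-point-orbit assumption is instead spent on a symmetrisation device that is never realised; without the passage to full bisections (or at least to a bisection whose source is a Stonean space containing its range, as the paper remarks), the openness argument does not go through.
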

If $\G$ already has orbits consisting of at least two points, 
then so does $\tilde \G$.
We are of course tacitly avoiding the case where $\G$ is a group
and $\Gzero$ is a single point.
\begin{proof}
  For every groupoid the isotropy bundle is closed, so we have to show that it is open.
  Consider $\gamma \in \tilde \G_{\tilde x}^{\tilde x}$.
  As $\tilde \G$ is ample, there exists a \textit{full} open bisection $B$ containing $\gamma$,
  that is,
  an open bisection with $s(B) = \tilde X = r(B)$
  by Brix--Scarparo \cite{Kevin}.
  Then $B$ defines a homeomorphism $\tilde X \rightarrow \tilde X$
  by $r\circ (s|_{B'})^{-1}$
  whose fixed point set $F$ contains $s(\gamma) = \tilde x = r(\gamma)$.
  By Frolik's theorem \cite{Frolik}
  this fixed point set is open
  and $B \cap s^{-1}(F)$ is an open neighbourhood of $\gamma$
  contained in the isotropy.
\end{proof}
\begin{remark}
  This of course works in general for every groupoid with Stonean unit space
  and orbits consisting of at least two points.
  So far we have been unable to drop this second condition, 
  although it can be dropped for the case of crossed products by groups,
  where the existence of enough full bisections
  for the topological full group to be covering is trivial
  (see \cite{Kevin} for definitions).
  Note that it is however not necessary for every $\gamma$ to be contained
  in a full bisection in order to apply Frolik's theorem as above.
  It suffices that $\gamma$ is contained in an open bisection $B$
  whose source $s(B)$ is again a Stonean space that contains the range $r(B)$.
\end{remark}

Furthermore, we show that the notion of groupoid Furstenberg boundary 
generalises the boundaries of groups
and even groups acting on spaces.
\begin{proposition}
  Let $\G = X \rtimes \Gamma$ be the transformation groupoid
  of a discrete group $\Gamma$ acting on a compact Hausdorff space $X$.
  Then the groupoid-equivariant injective envelope $I_\G(C(X))$
  coincides with the group-equivariant injective envelope $I_\Gamma(C(X))$.

  Hence, the groupoid Furstenberg boundary of a discrete group is its Furstenberg boundary.
  \label{prop:generalisesKawabe}
\end{proposition}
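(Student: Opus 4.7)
The plan is to identify the two categories in which the respective injective envelopes live and then conclude by the uniqueness statement of Theorem~\ref{thm:uniqueInjectiveEnvelope}. Concretely, I would show that for the transformation groupoid $\G = X \rtimes \Gamma$ the category of $\G$-operator systems containing $C(X)$, with $\G$-equivariant $C(X)$-linear ucp maps as morphisms, agrees with the category of $\Gamma$-operator systems containing $C(X)$, with $\Gamma$-equivariant ucp maps as morphisms. Once this is established, the notions of $\G$-extension, $\G$-injectivity, $\G$-rigidity and $\G$-essentiality of $C(X)$ translate term-by-term into their $\Gamma$-equivariant counterparts, so that $I_\Gamma(C(X))$ equipped with its Choi--Effros structure satisfies the universal property of $I_\G(C(X))$ and uniqueness forces the two envelopes to be canonically isomorphic over $C(X)$.

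To unwind the correspondence on objects, I would use that $\G = X \rtimes \Gamma$ has $s(x,g) = x$, $r(x,g) = g.x$, and composition $(g.x,h)(x,g) = (x,hg)$; since $\Gamma$ is discrete, $\G$ is topologically the clopen disjoint union $\bigsqcup_{g \in \Gamma} X$. A $\G$-action $\alpha$ on a $C(X)$-algebra $A$ therefore consists of fibrewise isomorphisms $\alpha_{(x,g)} \colon A_x \to A_{g.x}$ that assemble, for each fixed $g$, into a $*$-automorphism $\alpha_g \colon A \to A$ covering the $\Gamma$-action on $X$, with the cocycle condition reducing to the group law $\alpha_{hg} = \alpha_h \circ \alpha_g$; continuity of $\G \ast_s \A \to \A$ amounts to continuity of each $\alpha_g$, which is automatic. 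Conversely, any such $\Gamma$-action on a $C(X)$-algebra defines a $\G$-action. On morphisms, a $\G$-equivariant $C(X)$-linear ucp map is literally the same data as a $\Gamma$-equivariant $C(X)$-linear ucp map between $\Gamma$-$C(X)$-algebras, and any $\Gamma$-equivariant ucp map between $\G$-extensions of $C(X)$ that fixes $C(X)$ is automatically $C(X)$-linear since $C(X)$ lies in its multiplicative domain. Hence the two categories of extensions coincide on the nose.

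The only point that requires a little care is that the classical $\Gamma$-equivariant injective envelope $I_\Gamma(C(X))$ is a priori constructed as an operator system and needs a $C(X)$-algebra structure; but since the associated idempotent ucp $\Gamma$-map fixing $C(X)$ contains $C(X)$ in its multiplicative domain, $C(X)$ embeds as a unital $C^*$-subalgebra of the Choi--Effros algebra and the $C(X)$-action extends centrally, exactly as in the proof of Proposition~\ref{prop:ChoiEffros}. With that observation in place, the proposition is essentially a matter of unpacking the two constructions and invoking uniqueness in both settings; the second assertion, that the groupoid Furstenberg boundary of a discrete group $\Gamma$ agrees with its classical Furstenberg boundary, follows by specialising $X$ to a point.
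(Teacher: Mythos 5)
Your argument is correct, but it takes a different route from the paper. The paper does not pass through an identification of categories: it endows $I_\G(C(X))$ with a $\Gamma$-action and $I_\Gamma(C(X))$ with a $\G$-action, uses injectivity in each category to extend the two embeddings of $C(X)$ to ucp maps $\phi\colon I_\G \rightarrow I_\Gamma$ and $\psi\colon I_\Gamma \rightarrow I_\G$, verifies by fibrewise computations that $\psi$ is also a $\Gamma$-map and $\phi$ is also a $\G$-map (the latter via the multiplicative-domain observation that $\phi$ is a $C(X)$-map), and then concludes from $\G$-rigidity of $I_\G$ and $\Gamma$-rigidity of $I_\Gamma$ that both composites are the identity. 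You instead transport the universal property: you give $I_\Gamma(C(X))$ a $\G$-structure, argue that it is $\G$-injective and $\G$-rigid/essential, and invoke the uniqueness statement of Theorem~\ref{thm:uniqueInjectiveEnvelope}. This buys you the economy of not needing the $\Gamma$-action on $I_\G$ nor the two explicit equivariance checks, at the cost of having to descend injectivity from the group category to the groupoid category. On that point be slightly more careful than your phrase \enquote{the categories agree on the nose} suggests: the category in which $I_\Gamma(C(X))$ is an envelope consists of \emph{all} $\Gamma$-operator systems (including ones with no $C(X)$-module structure), whereas the $\G$-side corresponds to $\Gamma$-operator systems \emph{equipped with} a compatible $C(X)$-module structure, not merely ones containing $C(X)$. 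The implication you actually need, namely that $\Gamma$-injectivity yields $\G$-injectivity, goes in the right direction: a $\G$-morphism out of a $\G$-operator system is unital and $C(X)$-linear, hence fixes $C(X)$, so any $\Gamma$-ucp extension also fixes $C(X)$ and is automatically $C(X)$-linear by the multiplicative-domain argument you cite (the same device the paper uses to show $C(\tilde X)$ is injective among unital $C(X)$-algebras), and $\G$-rigidity and $\G$-essentiality follow from their $\Gamma$-counterparts since every $\G$-map assembles to a $\Gamma$-map. With that precision, your proof is complete and sound, and the reduction of the group case by taking $X$ to be a point is exactly as intended.
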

\begin{proof}
  First observe that $I_\G \defeq I_\G(C(X))$ carries a $\Gamma$-action 
  where $\gamma.f(\tilde x) = f(r(\tilde x,\gamma))$,
  with $\gamma\in \Gamma$, $f\in I_\G$, and $\tilde x$ in the spectrum of $I_\G$.
  The action is continuous by continuity of the $\G$-action.
  Likewise, $I_\Gamma \defeq I_\Gamma(C(X))$ carries a $\G$-action:
  As $C(X)$ embeds into the commutative \Cs-algebra $I_\Gamma$, it is a $C(X)$-algebra
  and as before we may define a continuous action by 
  $(x,\gamma)[f]_x \defeq [\gamma.f]_{\gamma.x}$
  with $\gamma \in \Gamma$, $f\in I_\Gamma$, and $x\in X$.
  
  Using their injectivity in the respective category,
  we may therefore extend the embeddings of $C(X)$ into $I_\Gamma$ and $I_\G$
  to a ucp $\Gamma$-map $\phi\colon I_\G \rightarrow I_\Gamma$
  and a ucp $\G$-map $\psi\colon I_\Gamma \rightarrow I_\G$.
  We then calculate that for all $x \in X$
  \begin{align*}
    [\psi(\gamma.f)]_{\gamma.x}
    &= \psi_{\gamma.x}( (x,\gamma).[f]_{x})
    =  (x, \gamma).\psi_{x}([f]_{x})
    =  [\gamma.\psi(f)]_{\gamma.x},
  \end{align*}
  so $\psi$ is also a ucp $\Gamma$-map.
  On the other hand, $\phi$ contains $C(X)$ in its multiplicative domain 
  and is therefore a $C(X)$-map.
  Hence for all $x\in X$ we find that
  \begin{align*}
    \phi_{\gamma.x}\left( (x,\gamma).[f]_x \right)
    = \phi_{\gamma.x}\left( [\gamma.f]_{\gamma.x} \right)
    = \left[ \phi(\gamma.f) \right]_{\gamma.x}
    = \left[ \gamma.\phi(f) \right]_{\gamma.x}
    = (x, \gamma).\phi_x([f]_x),
  \end{align*}
  so $\phi$ is $\G$-equivariant.

  Now $\psi \circ \phi$ is a ucp $\G$-map $I_\G \rightarrow I_\G$ that fixes $C(X)$
  and is therefore the identity by $\G$-rigidity of $I_\G$.
  Likewise, $\phi \circ \psi$ is a ucp $\Gamma$-map $I_\Gamma \rightarrow I_\Gamma$
  that fixes $C(X)$ and is the identity by $\Gamma$-rigidity of $I_\Gamma$
  and the two envelopes are isomorphic \Cs-algebras.
\end{proof}

\section[Simplicity]{Simplicity of Groupoid \Csalgebra{}s}\label{sec:simple}
\label{sec:simplicity}
In this final section, we apply the theory of boundary groupoids
to obtain a sufficient criterion for a groupoid $\G$ to have the intersection
property in theorem \ref{thm:conditionSufficient}.
In the case of a minimal groupoid, this
provides a weaker sufficient criterion for \Cs{}-simplicity
than the widely used notion of \emph{topological principality}
(sometimes also called \emph{topological freeness}).

We first relate the \Csalgebra{}s of a groupoid and its boundary groupoid:
\begin{lemma}
  For a locally compact Hausdorff \etale{} groupoid $\G$ 
  with compact unit space
  there is a canonical embedding of $\Csr(\G)$ 
  into the reduced \Csalgebra{} $\Csr(\tilde \G)$ of its boundary groupoid $\tilde \G$.
  \label{lem:reducedAlgebraEmbeds}
\end{lemma}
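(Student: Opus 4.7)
The boundary groupoid $\tilde\G=\tilde X\rtimes\G$ is a transformation groupoid sitting over $\G$: concretely, one may write
\[
  \tilde\G=\{(\gamma,\tilde x)\in\G\times\tilde X:s(\gamma)=q(\tilde x)\},
\]
with $s(\gamma,\tilde x)=\tilde x$, $r(\gamma,\tilde x)=\gamma.\tilde x$, and composition $(\gamma_1,\gamma_2.\tilde x)(\gamma_2,\tilde x)=(\gamma_1\gamma_2,\tilde x)$. The projection $p(\gamma,\tilde x)\defeq\gamma$ is then a continuous groupoid morphism $\tilde\G\to\G$ that restricts to the surjection $q\colon\tilde X\to X$ on unit spaces. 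The plan is to pull back along $p$ to obtain a $\star$-homomorphism $p^{\ast}\colon C_c(\G)\to C_c(\tilde\G)$, $f\mapsto f\circ p$, and then show that $p^{\ast}$ preserves the reduced norms. Continuity of $p^{\ast}f$ is clear; compact support follows from compactness of $\tilde X$, since $p^{-1}(K)=(K\times\tilde X)\cap\tilde\G$ is a closed subset of the compact set $K\times\tilde X$ for any compact $K\subseteq\G$. That $p^{\ast}$ respects convolution and involution is a routine check using that $p$ is a groupoid morphism.

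\textbf{Intertwining the regular representations.} For any unit $\tilde u\in\tilde X$ with $q(\tilde u)=u$, the assignment $\gamma\mapsto(\gamma,\tilde u)$ is a bijection $\G_u\to\tilde\G_{\tilde u}$, inducing a unitary $V_{\tilde u}\colon\ell^2(\G_u)\to\ell^2(\tilde\G_{\tilde u})$. The key step is to verify the intertwining identity
\[
  V_{\tilde u}\,\pi_u(f)\,V_{\tilde u}^{\ast}=\tilde\pi_{\tilde u}(p^{\ast}f)
  \qquad(f\in C_c(\G)),
\]
where $\tilde\pi_{\tilde u}$ denotes the regular representation of $\tilde\G$ at $\tilde u$. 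This is a direct unwinding of definitions: applying $\tilde\pi_{\tilde u}(p^{\ast}f)$ to the basis vector $\delta_{(\gamma,\tilde u)}$ and noting that $\tilde\G_{r(\gamma,\tilde u)}=\{(\alpha,\gamma.\tilde u):\alpha\in\G_{r(\gamma)}\}$ together with $(\alpha,\gamma.\tilde u)(\gamma,\tilde u)=(\alpha\gamma,\tilde u)$ gives
\[
  \tilde\pi_{\tilde u}(p^{\ast}f)\,\delta_{(\gamma,\tilde u)}
  =\sum_{\alpha\in\G_{r(\gamma)}}f(\alpha)\,\delta_{(\alpha\gamma,\tilde u)},
\]
which matches the image of $\pi_u(f)\delta_\gamma=\sum_\alpha f(\alpha)\delta_{\alpha\gamma}$ under $V_{\tilde u}$. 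In particular $\|\pi_u(f)\|=\|\tilde\pi_{\tilde u}(p^{\ast}f)\|$ whenever $q(\tilde u)=u$.

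\textbf{Conclusion.} Taking suprema and using surjectivity of $q$ (obtained from the embedding $C(X)\hookrightarrow C(\tilde X)$), one gets
\[
  \|f\|_{\Csr(\G)}
  =\sup_{u\in X}\|\pi_u(f)\|
  =\sup_{\tilde u\in\tilde X}\|\tilde\pi_{\tilde u}(p^{\ast}f)\|
  =\|p^{\ast}f\|_{\Csr(\tilde\G)},
\]
so $p^{\ast}$ extends by continuity to the desired canonical isometric $\star$-homomorphism $\Csr(\G)\hookrightarrow\Csr(\tilde\G)$. Essentially all the content is in the intertwining identity; the only mild obstacle is bookkeeping, namely keeping the conventions for the action of $\G$ on $\tilde X$ consistent when identifying source fibres, which is why I spelled the composition out above.
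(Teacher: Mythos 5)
Your proof is correct and follows essentially the same route as the paper: precompose with the canonical projection $\tilde\G\to\G$ (proper since $\tilde X$ is compact), check compatibility with convolution and involution, and unitarily identify each source fibre of $\tilde\G$ with the corresponding source fibre of $\G$ to intertwine the regular representations, concluding isometry via surjectivity of $q$. The only differences are cosmetic bookkeeping (you anchor the pairs $(\gamma,\tilde x)$ at the source rather than the range) and that you make explicit the use of surjectivity of $q$ when passing to suprema, which the paper leaves implicit.
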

\begin{proof}
  Denote again the quotient map $\tilde X \rightarrow X$ by $q$.
  As a set, $\tilde \G$ is 
  $\left\{ (\gamma, \tilde x) \bigm | \gamma \in \G, \tilde x \in \tilde X, r(\gamma) 
  = q(\tilde x) \right\}$
  and the range and source maps are given by 
  $r\left( (\gamma, \tilde x) \right) = \tilde x$
  and
  $s\left( (\gamma, \tilde x) \right) = \gamma^{-1}.\tilde x$.
  Furthermore denote by $Q\colon \tilde \G \rightarrow \G$ the surjective groupoid homomorphism 
  given by
  $(\gamma, \tilde x) \mapsto \gamma$, which extends $q\colon \tilde X \rightarrow X$.
  Note that $Q$ is proper so that
  $C_{\text c}(\G)$ embeds into $C_{\text c}(\tilde \G)$.
  Indeed, if $K\subset \G$ is a compact set,
  then the preimage $Q^{-1}(K) = \{ (\gamma, \tilde x) \mid \gamma \in K\}$
  is compact, since $\tilde X$ is.

  Since $Q$ is a groupoid homomorphism, 
  precomposition with $Q$ as a map
  $Q^\ast\colon C_{\text c}(\G) \hookrightarrow C_{\text c}(\tilde \G)$ 
  is compatible with the convolution and involution on both algebras.
  It is furthermore isometric with respect to the reduced norms,
  $\|f\|_{\Csr(\G)} = \|f\circ Q\|_{\Csr(\tilde \G)}$
  for $f \in C_c(\G)$,
  so that $Q^\ast$ extends from $C_{\text c}(\G)$ 
  to an embedding of the associated reduced algebras:
  First note that $Q$ gives a bijection 
  $\tilde \G_{\tilde x} \rightarrow \G_{q(\tilde x)}$ 
  between the source fibres by $(\gamma, \gamma\tilde x) \mapsto \gamma$,
  which makes
  $U\colon \ell^{2}(\tilde \G_{\tilde x}) \rightarrow \ell^{2}(\G_{q(\tilde x)})$ 
  by $\delta_{(\gamma, \gamma\tilde x)} \mapsto \delta_\gamma$,
  an isomorphism that
  intertwines the associated source fibre representations $\pi_{\tilde x}$ and $\pi_x$:
  \begin{align*}
    U \pi_{\tilde x}(f\circ Q) U^{\ast} \,\delta_\gamma
    &= U \pi_{\tilde x}(f\circ Q) \,\delta_{(\gamma, \gamma \tilde x)}\\
    &= U \sum\limits_{\alpha \in \G_x}
    f\circ Q\left( (\alpha, \alpha\tilde x)\right)
    \,\delta_{(\alpha\gamma, \alpha\gamma\tilde x)}\\
    &= U \sum\limits_{\alpha \in \G_{x}}
    f( \alpha ) \,\delta_{(\alpha\gamma, \alpha\gamma\tilde x)}\\
    &= \sum\limits_{\alpha \in \G_{x}} f( \alpha ) \,\delta_{\alpha\gamma}\\
    &= \pi_x(f) \,\delta_\gamma.
  \end{align*}
  Hence the norms of $f$ and $f\circ Q$ coincide 
  and we obtain an embedding of $\Csr(\G)$ into $\Csr(\tilde \G)$.
\end{proof}

To relate the intersection properties of $\G$ and $\tilde \G$, we furthermore
need the following technical lemma
stating that the above embedding is compatible with the $\G$-action
under ucp extensions:
\begin{lemma}
  Let $\G$ be a groupoid as above and $\tilde \G$ its boundary groupoid.
  Let $\pi$ be a $\ast$-re\-pre\-sen\-tation of $\Csr(\G)$ on $\B(\H)$
  whose restriction to $C(X)$ is injective,
  and let $\tilde\pi\colon \Csr( \tilde \G ) \rightarrow \B(\H)$
  be a ucp extension of $\pi$.
  Then the \Csalgebra{} $\intermediaryAlgebra$ 
  generated by $\tilde\pi( C( \tilde X ) )$
  inside $\B(\H)$
  carries the structure of a \Galgebra{},
  such that the restriction
  $\tilde\pi|_{C( \tilde X )} \rightarrow E$ 
  is $\G$-equivariant.
  \label{lem:extendUcpToGAlgebra}
\end{lemma}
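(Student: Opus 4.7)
My approach hinges on one observation: since $\tilde\pi$ extends the $\ast$-representation $\pi$ of $\Csr(\G) \subseteq \Csr(\tilde\G)$, the entire subalgebra $\Csr(\G)$ lies in the multiplicative domain of $\tilde\pi$. This gives the identities $\tilde\pi(f a) = \pi(f)\tilde\pi(a)$ and $\tilde\pi(af) = \tilde\pi(a)\pi(f)$ for all $f \in \Csr(\G)$ and $a \in \Csr(\tilde\G)$, and essentially every subsequent step is a manipulation of this.

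First I would equip $E$ with a $C(X)$-algebra structure. Since $C(X) \subseteq C(\tilde X)$ commutes with $C(\tilde X)$ inside $\Csr(\tilde\G)$, the multiplicative-domain identity gives $\pi(h)\tilde\pi(\tilde h) = \tilde\pi(h\tilde h) = \tilde\pi(\tilde h h) = \tilde\pi(\tilde h)\pi(h)$ for $h \in C(X)$ and $\tilde h \in C(\tilde X)$, so $\pi(C(X))$ commutes with the generators of $E$ and hence lies in the centre of $E$. Injectivity of $\pi|_{C(X)}$ then yields a faithful embedding $C(X) \hookrightarrow Z(E)$.

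Next I would construct the $\G$-action by conjugation with bisection elements. For $\gamma \in \G$, pick an open bisection $B$ containing $\gamma$ and a function $f_B \in C_c(B)$ with $f_B(\gamma) = 1$, and define $\beta_B\colon E \to E$ by $\beta_B(a) \defeq \tilde\pi(f_B)\, a\, \tilde\pi(f_B)^\ast$. On a generator $\tilde\pi(\tilde h)$, multiplicative domain gives $\beta_B(\tilde\pi(\tilde h)) = \tilde\pi(f_B\tilde h f_B^\ast)$, and $f_B \tilde h f_B^\ast \in C(\tilde X)$ is precisely the bisection-implemented $\G$-action on $C(\tilde X)$, so the image stays in $\tilde\pi(C(\tilde X)) \subseteq E$. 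To see that $\beta_B$ is multiplicative on all of $E$, note that $\beta_B(ab) - \beta_B(a)\beta_B(b) = \tilde\pi(f_B)\, a\, (1 - \pi(f_B^\ast f_B))\, b\, \tilde\pi(f_B)^\ast$ vanishes: the factor $\pi(f_B^\ast f_B)$ is central in $E$, so it commutes past $a$, and then $\tilde\pi(f_B)(1 - \pi(f_B^\ast f_B)) = \tilde\pi(f_B - f_B(f_B^\ast f_B)) = 0$ when $f_B$ is a partial isometry (and by approximation in general). Analogously, $\pi(g)\tilde\pi(f_B) = \tilde\pi(f_B)\pi(g \circ \alpha_B)$ for the bisection homeomorphism $\alpha_B\colon s(B) \to r(B)$ shows that $\beta_B$ maps $C_0(X\setminus s(\gamma))\cdot E$ into $C_0(X\setminus r(\gamma))\cdot E$, so it descends to a $\ast$-homomorphism $\beta_\gamma\colon E_{s(\gamma)} \to E_{r(\gamma)}$.

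Finally I would verify independence of $B$ and $f_B$ (any two bisections around $\gamma$ share a sub-bisection, and a locality argument analogous to Lemma~\ref{lem:inducedFibersAreLocal} shows the fibre maps coincide), composability $\beta_{\gamma\eta} = \beta_\gamma \circ \beta_\eta$ (by composing bisections), and continuity of the resulting action (transferred from the continuous $\G$-action on $C(\tilde X)$ through the ucp map $\tilde\pi|_{C(\tilde X)}$ whose image generates the fibres of $E$). $\G$-equivariance of $\tilde\pi|_{C(\tilde X)}\colon C(\tilde X) \to E$ is then immediate by construction: $\beta_\gamma(\tilde\pi(\tilde h)) = \tilde\pi(f_B \tilde h f_B^\ast) = \tilde\pi(\alpha_\gamma(\tilde h))$. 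I expect the principal obstacle to be the non-ample case, in which $f_B$ is only approximately a partial isometry, so that multiplicativity of $\beta_B$ has to be obtained through a net of bisection elements with $f_B^\ast f_B$ tending to $1_{s(B)}$, together with a careful continuity check for the bundle of fibre-maps this construction produces.
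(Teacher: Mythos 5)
Your overall route is the same as the paper's: conjugation by $\tilde\pi(g_\gamma)$ for $g_\gamma\in C_c(\G)$ supported on a bisection with $g_\gamma(\gamma)=1$, using that $C(X)$ (indeed all of $\Csr(\G)$) lies in the multiplicative domain of $\tilde\pi$, that $C(X)$ is central in $E$, and that the action is to be defined on fibres. However, the step you flag as the ``principal obstacle'' is handled incorrectly, and the fix you propose would not work. The global map $\beta_B(a)=\tilde\pi(f_B)a\tilde\pi(f_B)^\ast$ is genuinely \emph{not} multiplicative on $E$ in the non-ample case, and you cannot repair this with a net of $f_B$'s with $f_B^\ast\ast f_B\to 1_{s(B)}$: the element $f_B^\ast\ast f_B$ lies in $C_0(s(B))$, so uniform convergence to $1_{s(B)}$ is impossible unless $s(B)$ is clopen, and a pointwise-convergent net gives you no control in the $C^\ast$-norm. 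The point you are missing --- and the one the paper exploits --- is that multiplicativity is only needed (and only true) at the level of the fibre maps $E_{s(\gamma)}\to E_{r(\gamma)}$. Since $f_B(\gamma)=1$, the correction term $1-f_B^\ast\ast f_B$ is an element of $C(X)$ vanishing at $s(\gamma)$, hence lies in $C_0(X\setminus s(\gamma))$; by centrality it can be pulled out of $a(1-\pi(f_B^\ast f_B))b$, and your own descent argument (conjugation maps $C_0(X\setminus s(\gamma))E$ into something killed by the fibre projection at $r(\gamma)$) then annihilates it. This is exactly how the paper proves multiplicativity, via the identity $[g_\gamma^\ast\ast g_\gamma]_{s(\gamma)}=\1$, with no ampleness or partial-isometry hypothesis and no nets. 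The same fibrewise reading is needed for your equivariance claim: $f_B\tilde h f_B^\ast$ equals $\alpha_\gamma(\tilde h)$ only up to the weight $|f_B|^2\circ(r|_B)^{-1}$, i.e.\ only after projecting to the fibre at $r(\gamma)$.

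The second under-developed point is continuity of the action. Saying it is ``transferred from the continuous action on $C(\tilde X)$ through $\tilde\pi$'' only settles continuity on elements of the form $\tilde\pi(f)$, $f\in C(\tilde X)$; for general $a\in E$ one must approximate by sums of products $\tilde\pi(f_{1,i})\cdots\tilde\pi(f_{n,i})$ and control the error uniformly along the net $\gamma_\lambda\to\gamma$, which in the paper is done using upper semicontinuity of the section norms $x\mapsto\|[\,\cdot\,]_x\|_x$ together with invertibility of the fibre maps (conjugating back by $\gamma_\lambda^{-1}$). Without an argument of this kind the continuity claim does not follow, so this part of your sketch needs the real estimate rather than a transfer-of-structure remark.
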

\begin{proof}
  Let
  $\intermediaryAlgebra \defeq C^{\ast}( \tilde \pi( C(\tilde X) ) ) 
  \subseteq \B(\H)$ 
  be the $\Cs$-algebra generated in $\B(\H)$
  by the image of $\tilde\pi$ on $C( \tilde X )$.
  As $\tilde \pi$ restricts to $\pi$ on $C(X)$ and is injective there,
  we may identify 
  $C(X) \subseteq \intermediaryAlgebra$ 
  as a subalgebra and since $\pi$ is a \starhomomorphism{}, 
  $C( X )$ lies in the multiplicative domain of $\tilde \pi$, 
  so that its action is central.
  Therefore $\intermediaryAlgebra$, being unital, is a $C(X)$-algebra.
  For $\tilde \pi$ to be $\G$-equivariant, 
  the $\G$-action on 
  $\tilde\pi( C( \tilde X ) ) \subseteq \intermediaryAlgebra$
  is determined by
  \[\gamma. \left[ \tilde \pi(f) \right]_x
    =
    \tilde \pi_{\gamma.x}\left( \gamma.\left[ f \right]_x \right)
  \]
  for $f\in C( \tilde X )$ with $x = s(\gamma)$
  and $\gamma.x = r(\gamma)$.
  Note that if 
  $g_\gamma \in C_c(\G)$ is supported on a bisection of $\G$ 
  and $g_\gamma(\gamma) = 1$,
  then $g_\gamma \ast f \ast g_\gamma^\ast$ is in $C( \tilde X )$
  and a quick calculation shows that 
  \begin{equation}
    \label{eq:conjugationActionExplicit}
    \left( g_\gamma \ast f \ast g_\gamma^\ast \right)(\tilde x) 
    =
    \left| g_\gamma\left( \left(r|_B \right)^{-1}(\tilde x)\right) \right|^2 
    \cdot f\left(s\circ\left( r|_B \right)^{-1}(\tilde x)\right),
  \end{equation}
  where $B\subseteq \tilde \G$ is the bisection
  on which $g_\gamma\circ q$ is supported.
  Hence $\gamma.[f]_x = \left[ g_\gamma \ast f \ast g_\gamma^\ast \right]_{\gamma.x}$
  and since $g_\gamma$ is in the multiplicative domain of $\tilde \pi$
  we can define
  \[
    \gamma. \left[ \tilde\pi(f) \right]_x
    \defeq \tilde\pi_{\gamma.x}\left( \gamma.\left[ f \right]_x \right)
    = \left[ \tilde\pi(g_\gamma) \tilde\pi(f) \tilde\pi(g_\gamma)^\ast \right]_{\gamma.x}.
  \]
  This extends to general $a \in \intermediaryAlgebra$ by
  \begin{equation}
    \gamma. \left[ a \right]_x
    \defeq
    \left[ \tilde\pi(g_\gamma) a \tilde\pi(g_\gamma)^\ast \right]_{\gamma.x}
    \label{eq:actionOnIntermediaryAlgebra}
  \end{equation}
  and we proceed to show that this is a well-defined $\G$-action.
  First we show that $\gamma.[a]_x$ does not depend on the choice of $a$ to
  represent $[a]_x$,
  that is, $\gamma.[a]_x = 0$ if $[a]_x=0$
  for $a \in \overline{C_0( X\setminus x )\intermediaryAlgebra}$.
  We may approximate $a$ by finite sums of elements of the form 
  $h \cdot \tilde\pi(f_1)\cdots\tilde\pi(f_n)$
  with $h \in C_0( X\setminus x )$
  and $f_i \in C( \tilde X )$
  while $n$ varies
  and as the action in Equation \eqref{eq:actionOnIntermediaryAlgebra}
  depends continuously on $a$,
  it suffices to show that 
  $\gamma.\left[ h \cdot \tilde\pi(f_1) \cdots \tilde\pi(f_n) \right]_x = 0$.
  As $g_\gamma^\ast \ast g_\gamma$ and $g_\gamma \ast g_\gamma^\ast$
  are supported on $X$
  and $\left[ g_\gamma \ast g_\gamma^\ast \right]_{\gamma.x} = \1$,
  we may now calculate that
  \begin{align*}
    \left[ \pi\left(g_\gamma\right) h \tilde\pi(f_1) 
    \tilde\pi(f_2) \pi\left( g_\gamma \right)^\ast \right]_x
    &=
    \left[ \pi\left( g_\gamma \ast g_\gamma^\ast \right) \right]_{\gamma.x}
    \left[ \pi\left(g_\gamma\right) h \tilde\pi(f_1) 
    \tilde\pi(f_2) \pi\left( g_\gamma \right)^\ast \right]_x
    \\
    &=
    \left[ \pi\left( g_\gamma \ast g_\gamma^\ast \right) 
      \pi\left(g_\gamma\right) h \tilde\pi(f_1) 
    \tilde\pi(f_2) \pi\left( g_\gamma \right)^\ast \right]_x
    \\
    &=
    \left[ \pi\left( g_\gamma \right) 
      \pi\left( g_\gamma^\ast  \ast g_\gamma\right) 
      h \tilde\pi(f_1) 
    \tilde\pi(f_2) \pi\left( g_\gamma \right)^\ast \right]_x
    \\
    &=
    \left[ \pi\left( g_\gamma \right) 
      h \tilde\pi(f_1) 
      \pi\left( g_\gamma\right)^\ast  \pi\left(g_\gamma\right) 
    \tilde\pi(f_2) \pi\left( g_\gamma \right)^\ast \right]_x
    \\
    &=
    \left[ 
      \pi\left( g_\gamma \right) 
      h \tilde\pi(f_1) 
      \pi\left( g_\gamma\right)^\ast
    \right]_{\gamma.x}
    \left[ 
      \pi\left(g_\gamma\right) 
      \tilde\pi(f_2) \pi\left( g_\gamma \right)^\ast 
    \right]_x
    \\
    &=
    \tilde\pi_{\gamma.x}\left( \gamma.\left[ hf_1 \right]_x \right)
    \tilde\pi_{\gamma.x}\left( \gamma.\left[ f_2 \right]_x \right)
    = 0
  \end{align*}
  for $n=2$ and analogously for arbitrary $n$.
  Incidentally, this also shows how the action gives homomorphisms
  between the appropriate fibers, that is,
  \[ \gamma.\left[ a b \right]_x =
    \gamma.\left[ a \right]_x
    \gamma.\left[ b \right]_x.
  \]

  Next we show that $\gamma.\left[ a \right]_x$ is independent 
  of the choice of $g_\gamma$ as above.
  Suppose $g_\gamma$ and $g'_\gamma$ are as described 
  supported on open bisections $B$ and $B'$
  and evaluate to one at $\gamma$.
  As $B\cap B'$ is an open neighbourhood of $\gamma$,
  the partial homeomorphisms
  $s\circ \left( r|_B \right)^{-1}$ and
  $s\circ \left( r|_{B'} \right)^{-1}$
  coincide on the neighbourhood $r(B\cap B')$ of $r(\gamma)$
  and by Equation \eqref{eq:conjugationActionExplicit}
  we find that
  \[
    \left(g_\gamma \ast f \ast g_\gamma^\ast\right)
    - 
    \left(g'_\gamma \ast f \ast \left( g'_\gamma \right)^{\ast}\right)
    \left( \tilde x \right)
    =
    \underbrace{
      \left( 
        \left| g_\gamma\left( \left( r|_B \right)^{-1}(x) \right) \right|^2
        - 
        \left| g'_\gamma\left( \left( r|_B \right)^{-1}(x) \right) \right|^2
      \right)
    }_{\in C_0\left( X\setminus r(\gamma) \right)}
    \cdot f\left(s\circ\left( r|_B \right)^{-1}(\tilde x)\right).
  \]
  and therefore
  \[
    \left[ g_\gamma \ast f \ast g_\gamma^\ast \right]_{r(\gamma)}
    =
    \left[ g'_\gamma \ast f \ast \left( g'_\gamma \right)^\ast \right]_{r(\gamma)}.
  \]
  By approximating arbitrary $a \in \intermediaryAlgebra$
  by finite sums of elements of the form $h\cdot \tilde\pi(f_1) \cdots \tilde \pi(f_n)$
  as above,
  we conclude that the same holds for arbitrary $a$.

  As $\left[ g_\gamma^\ast \ast g_\gamma \right]_x = \1$
  and $g_\gamma^\ast$ is a valid choice for $g_{\gamma^{-1}}$,
  the action of $\gamma$ is invertible by acting with $\gamma^{-1}$
  and is therefore by $\ast$-isomorphisms.
  Furthermore,
  as $g_\eta \ast g_\gamma$ for composable $\eta$ and $\gamma \in \G$
  is supported on a bisection and evaluates to one at $\eta\gamma$,
  it is a valid choice for $g_{\eta\gamma}$ so that the action is compatible
  with composition in $\G$.

  Finally, we show that the action is continuous.
  Let $\gamma_\lambda \to \gamma$ 
  and $\left[ a_\lambda \right]_{x_\lambda} \to \left[ a \right]_x$
  with $x_\lambda = s(\gamma_\lambda)$ 
  and $x = s(\gamma)$.
  We have to prove that 
  $\gamma_\lambda.\left[ a_\lambda \right]_{x_\lambda} \to \gamma.\left[ a \right]_x$.
  As the elements of $\intermediaryAlgebra$ 
  are exactly the continuous sections in the associated bundle,
  the convergence $\left[ a_\lambda \right]_{x_\lambda} \to \left[ a \right]_x$
  is equivalent to 
  $\left\| \left[ a_\lambda - a \right]_{x_\lambda} \right\|_{x_\lambda} \to 0$
  and it therefore suffices to show that 
  $\gamma_\lambda.\left[ a \right]_{x_\lambda} \to \gamma.\left[ a \right]_x$.
  We do this first for $a = \tilde\pi(f)$ 
  and then generalise to arbitrary $a$ as before.
  As $\tilde\pi$ is $\G$-equivariant we find that
  \[
    \gamma_\lambda.\left[ \tilde\pi(f) \right]_{x_\lambda}
    =
    \tilde\pi_{\gamma_\lambda.x_\lambda}\left( \gamma_\lambda.[ f ]_{x_\lambda} \right)
    \to
    \tilde\pi_{\gamma.x}\left( \gamma.\left[ f \right]_x \right)
    =
    \gamma.\left[ \tilde\pi(f) \right]_{x}.
  \]

  With the action by $\gamma$ being via \starhomomorphism{}, the same holds with $\tilde \pi(f)$
  replaced by a linear combination of finite products of this form.
  Since these are dense, convergence 
  $\gamma_\lambda.\left[ a_\lambda\right]_{x_\lambda} \to \gamma.\left[ a \right]_x$
  may be tested against such sections: 
  The net converges to $\gamma.\left[ a \right]_x$, if and only if
  for all $\eps>0,\,  n,k \in \N$, and $ f_{1,1},\ldots, f_{n,k} \in C( \tilde X )$
  with
  $\|\gamma.[a]_x - [ \sum\nolimits_{i=1}^k\tilde \pi(f_{1,i}) \cdots 
  \tilde \pi(f_{n,i}) ]_{\gamma.x} \|_{\gamma.x} < \eps$
  we eventually have 
  $\| \gamma_\lambda.[ 
  a_\lambda ]_{x_\lambda} 
  - [ \sum\nolimits_{i=1}^k\tilde \pi( f_{1,i} ) \cdots 
    \tilde \pi( f_{n,i} ) 
  ]_{\gamma_\lambda.x_\lambda} \|_{\gamma_\lambda.x_\lambda} 
  < \eps.$

  Let 
  $b = \sum\nolimits_{i=1}^k\tilde\pi\left( f_{1,i} \right) 
  \cdots \tilde\pi\left( f_{n,i} \right)$
  such that $[b]_{\gamma.x}$ approximates $\gamma.[a]_x$ up to $\eps$.
  Then 
  \begin{align*}
    \left\|
    \gamma_\lambda.\left[ a \right]_{x_\lambda}
    -
    \left[ b \right]_{\gamma_\lambda.x_\lambda}
    \right\|_{\gamma_\lambda.x_\lambda}
    &=
    \left\|
    \left[ a \right]_{x_\lambda}
    -
    \gamma_\lambda^{-1}.\left[ b \right]_{\gamma_\lambda.x_\lambda}
    \right\|_{x_\lambda}
    \\
    &\leq
    \left\|
    \left[ a \right]_{x_\lambda}
    -
    \left[ 
      \tilde\pi(g_\gamma)^\ast b \tilde\pi(g_\gamma) 
    \right]_{x_\lambda}
    \right\|_{x_\lambda}
    +
    \left\|
    \left[ 
      \tilde\pi(g_\gamma)^\ast b \tilde\pi(g_\gamma) 
    \right]_{x_\lambda}
    -
    \gamma_\lambda^{-1}.\left[ b \right]_{\gamma_\lambda.x_\lambda}
    \right\|_{x_\lambda}
    .
  \end{align*}
  By the arguments above the right-hand term vanishes as $\lambda\to\infty$
  while the left-hand term is the norm of a section and hence
  depends upper semi-continuously on $x_\lambda$.
  Hence
  \[
    \limsup\limits_{\lambda\to \infty}
    \left\|
    \gamma_\lambda.\left[ a \right]_{x_\lambda}
    -
    \left[ b \right]_{\gamma_\lambda.x_\lambda}
    \right\|_{\gamma_\lambda.x_\lambda}
    \leq
    \left\|
    \left[ a \right]_{x}
    -
    \left[ 
      \tilde\pi(g_\gamma)^\ast b \tilde\pi(g_\gamma) 
    \right]_{x}
    \right\|_{x}
    =
    \left\|
    [a]_x
    -
    \gamma^{-1}.\left[ b \right]_{\gamma.x}
    \right\|_x
    <\eps,
  \]
  so the action is continuous.
\end{proof}

We are now able to show that a groupoid $\G$ inherits the intersection property
from its boundary groupoid $\tilde \G$:
\begin{lemma}
  Let $\G$ be a locally compact Hausdorff groupoid with compact unit space and
  $\tilde \G$ its boundary groupoid.
  If $\tilde \G$ has the intersection property, then so does $\G$.
  \label{lem:intersectionPassesFromEnvelope}
\end{lemma}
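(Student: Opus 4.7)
The plan is to argue by contradiction. Assume that $\G$ fails the intersection property, so that there is a nonzero closed two-sided ideal $J$ of $\Csr(\G)$ with $J\cap C(X)=\{0\}$. Composing the quotient map $\Csr(\G)\to\Csr(\G)/J$ with a faithful representation of the quotient yields a $\star$-representation $\pi\colon \Csr(\G)\to\B(\H)$ whose restriction to $C(X)$ is injective while $\ker\pi\supseteq J$ is nonzero. The strategy is to promote $\pi$ to a $\star$-representation $\tilde\pi$ of $\Csr(\tilde\G)$ whose restriction to $C(\tilde X)$ remains injective; the intersection property of $\tilde\G$ will then force $\tilde\pi$, and hence $\pi$, to be faithful, contradicting $J\neq 0$.

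For the extension, I would combine Lemma \ref{lem:reducedAlgebraEmbeds} with Arveson's extension theorem to first lift $\pi$ to a ucp map $\tilde\pi\colon\Csr(\tilde\G)\to\B(\H)$ along the canonical embedding. Lemma \ref{lem:extendUcpToGAlgebra} then equips $E\defeq \Cs(\tilde\pi(C(\tilde X)))\subseteq\B(\H)$ with the structure of a \Galgebra{} such that $\tilde\pi|_{C(\tilde X)}\colon C(\tilde X)\to E$ becomes $\G$-equivariant and ucp; and since $\pi|_{C(X)}$ is an injective $\star$-homomorphism, $C(X)$ embeds $\G$-equivariantly into $E$. The $\G$-injectivity of $C(\tilde X)=I_\G(C(X))$ extends this embedding to a ucp $\G$-map $\sigma\colon E\to C(\tilde X)$, and since $\sigma\circ\tilde\pi|_{C(\tilde X)}$ is a ucp $\G$-selfmap of $C(\tilde X)$ fixing $C(X)$, the $\G$-rigidity of the injective envelope (Theorem \ref{thm:uniqueInjectiveEnvelope}) forces it to be the identity. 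In particular $\tilde\pi|_{C(\tilde X)}$ is a complete order isomorphism onto its image in $E$, and hence injective.

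The main technical obstacle will be promoting $\tilde\pi$ from a ucp map to a genuine $\star$-representation by showing that $C(\tilde X)$ lies in its multiplicative domain. For $f\in C(\tilde X)$ the Schwarz defect $\tilde\pi(f^*f)-\tilde\pi(f)^*\tilde\pi(f)\geq 0$ is annihilated by $\sigma$: indeed $\tilde\pi(C(\tilde X))$ sits inside the multiplicative domain of $\sigma$ because $\sigma\circ\tilde\pi|_{C(\tilde X)}=\mathrm{id}$, so $\sigma(\tilde\pi(f)^*\tilde\pi(f))=f^*f=\sigma(\tilde\pi(f^*f))$. Upgrading ``annihilated by $\sigma$'' to ``equal to zero'' amounts to showing that $\tilde\pi(C(\tilde X))$ is already a \Cs-subalgebra of $E$, not merely a complete order subsystem. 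The natural route is via the Choi--Effros product on the range of the idempotent ucp $\G$-map $\tilde\pi\circ\sigma\colon E\to E$: the resulting abelian \Cs-algebra is canonically $\G$-equivariantly $\star$-isomorphic to $C(\tilde X)$ via $\tilde\pi$, and a further appeal to $\G$-rigidity (applied to identify this Choi--Effros copy of $C(\tilde X)$ with $\tilde\pi(C(\tilde X))\subseteq E$ as $\G$-subsystems of $E$) should force the Choi--Effros product to coincide with the product inherited from $E$. This identification is the step I expect to require the most care.

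Once $C(\tilde X)$ lies in the multiplicative domain of $\tilde\pi$, that domain is a \Cs-subalgebra of $\Csr(\tilde\G)$ containing both $C(\tilde X)$ and the image of $\Csr(\G)$. As $\tilde\G=\tilde X\rtimes\G$ is a crossed-product groupoid, every element of $C_c(\tilde\G)$ can be uniformly approximated by finite sums $\sum_i h_i\cdot(g_i\circ Q)$ with $h_i\in C(\tilde X)$ and $g_i\in C_c(\G)$ supported on bisections, so these two subalgebras generate $\Csr(\tilde\G)$. Hence the multiplicative domain of $\tilde\pi$ is all of $\Csr(\tilde\G)$, making $\tilde\pi$ an honest $\star$-representation; it is injective on $C(\tilde X)$ because $\sigma$ is a left inverse, so the intersection property of $\tilde\G$ forces $\tilde\pi$, and therefore $\pi$, to be faithful, contradicting $J\neq 0$.
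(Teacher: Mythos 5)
Your setup coincides with the paper's own first half: Arveson extension of $\pi$ to a ucp map $\tilde\pi$ on $\Csr(\tilde\G)$, the $\G$-algebra structure on $E=\Cs(\tilde\pi(C(\tilde X)))$ from Lemma \ref{lem:extendUcpToGAlgebra}, a ucp $\G$-map $\sigma\colon E\to C(\tilde X)$ from $\G$-injectivity, and $\sigma\circ\tilde\pi|_{C(\tilde X)}=\mathrm{id}$ from $\G$-rigidity (your $\sigma$ is the paper's $\phi$). The gap is exactly the step you flag yourself: the claim that $C(\tilde X)$ lies in the multiplicative domain of $\tilde\pi$, i.e.\ that $\tilde\pi(C(\tilde X))$ is already a \Cs-subalgebra of $\B(\H)$. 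Since $\tilde\pi(C(\tilde X))$ generates $E$, this would force $E=\tilde\pi(C(\tilde X))$ and $\sigma$ to be injective, and the paper explicitly notes that $\tilde\pi|_{C(\tilde X)}$ may fail to be a \starhomomorphism{} precisely when $\sigma$ has non-trivial kernel; nothing in the hypotheses excludes this. Your proposed repair does not close the gap: the Choi--Effros product on the range of the idempotent $\tilde\pi\circ\sigma$ yields an \emph{abstract} \Cs-algebra isomorphic to $C(\tilde X)$, but whether that product agrees with the operator product of $\B(\H)$ is exactly the unproven assertion, and $\G$-rigidity cannot decide it. Rigidity only says that the unique ucp $\G$-selfmap of $C(\tilde X)$ fixing $C(X)$ is the identity --- a fact you have already spent to get $\sigma\circ\tilde\pi=\mathrm{id}$ --- and it says nothing about the internal multiplicative structure of the operator system $\tilde\pi(C(\tilde X))$ inside $\B(\H)$. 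A unital complete order embedding admitting a ucp left inverse need not be multiplicative, so the Schwarz defect $\tilde\pi(f^\ast f)-\tilde\pi(f)^\ast\tilde\pi(f)$ can be a nonzero positive element annihilated by $\sigma$.

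The paper resolves this obstruction by a different device, which your argument is missing: instead of upgrading $\tilde\pi$ itself, it forms $D=\Cs(\tilde\pi(\Csr(\tilde\G)))$ and $F=\overline{\ker(\sigma)\cdot\tilde\pi(C_c(\tilde\G))}$, and proves that $F$ is a two-sided ideal of $D$ with $F\cap E=\ker(\sigma)$; this is the technical heart of the proof, using real-valued $g\in C_c(\G)$ supported on bisections, the factorisation $g=h\ast h^\ast\ast h$ with $h=\sqrt[3]{g}$, centrality of $C(X)$, and $\G$-equivariance of $\sigma$. Composing with the quotient map $\Phi\colon D\to D/F$ makes $\Phi\circ\tilde\pi$ multiplicative on both $C(\tilde X)$ and $\Csr(\G)$, hence a \starhomomorphism{} on all of $\Csr(\tilde\G)$ which is still faithful on $C(\tilde X)$; the intersection property of $\tilde\G$ then gives faithfulness of $\Phi\circ\tilde\pi$, hence of $\Phi\circ\pi$, hence of $\pi$. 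Your contradiction framing is harmless, but without either a proof that $\ker\sigma=0$ or a substitute for this ideal-and-quotient step, the argument has a hole exactly where you anticipated needing the most care.
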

\begin{proof}
  Let $\pi\colon \Csr(\G) \rightarrow \B(\H)$ be a representation 
  of $\Csr(\G)$ with $\ker(\pi) \cap C(X) = \{0\}$.
  By Arveson extension we may find a ucp extension 
  $\tilde \pi\colon \Csr(\tilde \G) \rightarrow \B(H)$
  and we denote by $D = C^\ast( \tilde \pi( \Csr(\tilde \G) ) )$ 
  the sub-$C^*$-algebra of $\B(\H)$ 
  generated by $\tilde \pi( \Csr( \G ) )$.
  We consider 
  $\intermediaryAlgebra \defeq C^{\ast}( \tilde \pi( C(\tilde X) ) ) 
  \subseteq D$ 
  as in Lemma~\ref{lem:extendUcpToGAlgebra}
  and endow it with the $\G$-algebra structure defined there.

  As $\intermediaryAlgebra$ contains $C(X)$ as a sub-$\G$-algebra, 
  we may find a ucp $\G$-map 
  $\phi\colon \intermediaryAlgebra \rightarrow C( \tilde X )$
  by extending the inclusion of $C( X )$.
  Then $\phi \circ \tilde\pi|_{C( \tilde X )}$ is a $\G$-map 
  fixing $C( X )$,
  hence by rigidity of $C( \tilde X )$ it is the identity.
  Therefore, both $\phi|_{\tilde \pi( C( \tilde X ) )}$ 
  and $\tilde\pi|_{C( \tilde X )}$ are isometries
  and since 
  the multiplicative domain of $\phi$ coincides with
  $\overline{\text{span}}{\{ u \mid \|u\|=1,\,\phi(u)\text{ unitary} \}}$
  and contains $\tilde \pi(u)$ for $u \in C( \tilde X )$ unitary
  which generate $\intermediaryAlgebra$ as a $\Cs$-algebra,
  $\phi$ is a \starhomomorphism{}.

  However, $\tilde \pi|_{C( \tilde X )}$ might fail to be
  a \starhomomorphism{}, 
  if $\phi$ has non-trivial kernel.
  This is alleviated as follows:
  We consider 
  $F= \overline{\ker(\phi) \cdot \tilde\pi( C_c( \tilde \G ) )} $
  and show that it is an ideal in $D$,
  or equivalently
  $\tilde \pi( C_c( \tilde \G ) ) \ker( \phi ) 
  \subseteq 
  \overline{ \ker( \phi ) \tilde \pi( C_c(\tilde \G) ) }$.
  Fix $a \in \ker(\phi)$, 
  as well as 
  $g \in C_c\left( \G \right)$ real-valued 
  and vanishing outside of an open bisection $B$.
  We want to show that $\pi(g)a$ is 
  contained in
  $\ker(\phi) \pi\left( C_c\left( \G \right) \right)$.
  Let $h = \sqrt[3]g$.
  Then $h^\ast \ast h$ is supported on the unit space $X$, 
  since $g$ is supported on a bisection
  and given by 
  \begin{equation*}
    \left( h^\ast \ast h \right)(x) = 
    \begin{cases}
      |h(\gamma_x)|^2 & x \in s(B)\\
      0
    \end{cases}
  \end{equation*}
  where $\gamma_x$ is the unique arrow in $B$ such that $Bx = \{\gamma_x\}$.
  Hence $h\ast h^\ast \ast h = g$
  and $\pi(g)a = \pi\left( h\ast h^\ast \ast h \right) a$.
  Since $h^\ast \ast h \in C(X)$ acts centrally on $\intermediaryAlgebra$,
  we find that 
  $\pi(g)a = \pi(h \ast h^\ast \ast h)a 
  = \left( \pi(h) \cdot a \cdot \pi(h)^\ast \right)\pi(h)$.

  We proceed to argue that $\phi(\pi(h) a \pi(h)^\ast)$ vanishes,
  or equivalently
  $\left[ \pi(h)a\pi(h)^\ast \right]_x \in \ker\left( \phi_x \right)$
  for all $x\in X$.
  First assume $x \not \in r(B)$,
  or $x \in r(B)$ with $xB = {\gamma}$ and $h(\gamma) = 0$.
  Then let $k = \sqrt[3] h$ and note that $k^\ast \ast k \in C_0\left(
  X\setminus x \right)$,
  so
  \begin{equation*}
    \left[ \pi(h) a \pi(h)^\ast \right]_x
    = 
    \left[ \pi(kk^\ast) \pi(k) a \pi(h)^\ast \right]_x
    =
    \left[ \pi(kk^\ast)\right]_x \left[ \pi(k) a \pi(h)^\ast \right]_x
    = 
    0.
  \end{equation*}
  On the other hand, if $x \in r(B)$ with $xB = {\gamma}$
  but $h(\gamma) \neq 0$,
  we may rescale $h$ to $h' \defeq h/h(\gamma)$.
  Then by definition of the $\G$-action 
  \begin{equation*}
    h(\gamma)^2 \cdot \gamma.\left[ a \right]_{s(\gamma)}
    =
    h(\gamma)^2 \left[ \pi(h') a \pi(h')^\ast \right]_x
    = 
    \left[ \pi(h) a \pi(h)^\ast \right]_x.
  \end{equation*}
  Now, since $\phi$ is a $\G$-map,
  $\gamma.[a]_{s(\gamma)}$ is in the kernel of $\phi_x$
  exactly if $[a]_{s(\gamma)}$ is in the kernel of $\phi_{s(\gamma)}$,
  which holds since $a\in \ker(\phi)$.
  So, for $g\in C_c(\G)$ real-valued and supported on a bisection,
  we find that
  \begin{equation*}
    \pi(g) a =
    \left( \pi(h)\cdot a \cdot \pi(h)^\ast \right) 
    \pi(h) \in \ker(\phi) \cdot \pi\left( C_c\left( \G \right) \right),
  \end{equation*}
  and as such $g$ span $C_c(\G)$ densely
  we may conclude that 
  $\pi( C_c( \G ) ) \ker(\phi) 
  \subseteq \overline{\ker(\phi) \pi( C_c( \G ) )}$.
  Note that $C_c( \G ) \cdot C( \tilde X )$ 
  spans $C_c( \tilde \G )$ densely
  and that $C( \tilde X )\ker( \phi ) \subseteq \ker( \phi )$,
  so that the collection of all $ga$ for $g$ and $a$ as above spans a dense subset of 
  $\tilde \pi( C_c( \tilde \G ) )\ker( \phi )$.
  Therefore $F$ is an ideal in $D$
  whose elements are exactly these fixed by left multiplication with an approximate unit 
  of $\ker(\phi) \subseteq \intermediaryAlgebra$.
  Hence $F\cap \intermediaryAlgebra = \ker(\phi)$.

  Denoting by $\Phi$ the quotient map $D \rightarrow D/F$,
  we consider the ucp map $\Phi \circ \tilde \pi$.
  As $\phi$ by design factors through the restriction of $\Phi$ to $\intermediaryAlgebra$,
  and $\phi \circ ( \Phi \circ \tilde \pi )|_{C\left( \tilde X \right)} 
  = \text{id}_{C\left( \tilde X \right)}$
  we may now conclude that 
  $( \Phi \circ \tilde \pi )|_{C\left( \tilde X \right)}$ 
  is a \starhomomorphism{ }
  since $\phi$ is an injective left inverse on $\intermediaryAlgebra / F \subseteq D/F$.
  Additionally, $( \Phi \circ \tilde \pi )|_{\Csr\left( \G \right)} = \Phi \circ \pi$
  is a \starhomomorphism{ },
  so both $C( \tilde X )$ and $\Csr( \G )$
  belong to the multiplicative domain of $\Phi \circ \tilde \pi$.
  As their product is dense in $\Csr( \tilde \G )$
  it follows that $\Phi \circ \tilde \pi$ itself is a \starhomomorphism{}.
  However, $\Phi \circ \tilde \pi$ is faithful on $C( \tilde X )$
  since 
  $\phi \circ ( \Phi \circ \tilde \pi )|_{C\left( \tilde X \right)} 
  = \text{id}_{C\left( \tilde X \right)}$,
  and by the intersection property of $\tilde \G$,
  it is itself faithful.
  As $\tilde \pi$ extends $\pi$,
  we may conclude that $\pi$ is faithful on $\Csr( \G )$.
  We conclude that $\G$ has the intersection property.
\end{proof}

Recall that Kawabe's characterisation \cite{Kawabe}
of \Cs{}-simplicity of discrete groups acting on compact spaces
generalises Kennedy's results \cite{KennedyIntrinsic}
by identifying it with the absence of \emph{recurrent} amenable subgroups
in the stabiliser subgroups of the action.
Given a discrete group $G$, 
both notions rely on endowing the set $\text{Sub}(G)$ of subgroups of $G$
equipped with the Chabauty topology, that is, 
the topology of pointwise convergence of indicator functions,
as well as the action of $G$ by conjugating subgroups.
A \emph{recurrent} subgroup of a group $G$ is then a subgroup $H$,
such that the closure of its orbit under the $G$-action
does not contain the trivial subgroup $\{e\}$.
The analogous notion in Kawabe's work, where $G$ acts on a compact space $X$,
considers amenable subgroups of the point stabilisers $\text{Stab}(x)$ of the action of $G$ on $X$,
equipped with an action of $G$ by conjugation.
To keep track of the basepoint, denote a subgroup $H \leq \text{Stab}(x)$
as $(x, H)$  in $X \times \text{Sub}(G)$
with the action of $g \in G$ by
$g.(x, H) = (g.x, gHg^{-1})$ and the product topology.
Such a subgroup is again \emph{recurrent},
if the closure of its orbit contains the trivial subgroup 
$(x, \{e\})$ at the same basepoint.
For $G$ acting minimally on $X$,
Kawabe shows that the absence of recurrent amenable subgroups 
in the stabilisers
is again equivalent to simplicity of the reduced crossed product $C(X) \rtimes G$,
while in general it is equivalent to the intersection property
of every closed, $G$-invariant subset of $X$.

In the following we provide a generalisation of one of Kawabe's results
\cite[Thm 5.2]{Kawabe} to \etale{} groupoids,
establishing a new sufficient criterion for the intersection property.
Let $\text{Sub}(\G)$ denote the space of all subgroups of the isotropy groups of $\G$, 
that is, the disjoint union of the $\text{Sub}(\G_x^x)$ 
ranging over all $x \in \Gzero$.
We endow $\text{Sub}(\G)$ with the subspace topology of the Chabauty topology 
of the power set of $\G$.
Recall that a subbasis of this topology is given by the collections
\begin{align*}
  \mathcal O_U &= \left\{ Y \subset \G \bigm| Y \cap U \neq \emptyset \right\}
  &&\text{and}&
  \mathcal O'_K &= \left\{ Y \subseteq \G \bigm| Y \cap K = \emptyset \right\}
\end{align*}
for $U\subseteq \G$ open
and $K\subseteq \G$ compact.
We may as before equip $\text{Sub}(\G)$ with an action of $\G$ by conjugation,
where an arrow $\gamma \in \G$ acts on a subgroup $\Lambda_{s(\gamma)}$
of $\G_{s(\gamma)}^{s(\gamma)}$
by conjugation $\gamma \Lambda_{s(\gamma)} \gamma^{-1}$.
\begin{definition} 
  Let $\G$ be a locally compact \etale{} Hausdorff groupoid
  and $x\in \Gzero$ a unit.  
  A subgroup $\Lambda \leq \G_x^x$ of the isotropy $\G_x^x$ at $x$ 
  is called \emph{recurrent}, 
  if the closure of the $\G$-orbit of $\Lambda$ under the conjugation action 
  does not contain the trivial subgroup $\{x\}$ of $\G_x^x$.  
\end{definition}
As in the setting of crossed products, the absence of \emph{amenable} recurrent subgroups
forces the boundary groupoid to be principal
and provides therefore a sufficient criterion for the intersection property of $\G$.
\begin{theorem}
  Let $\G$ be a Hausdorff \etale{} groupoid with compact unit space
  that does not have recurrent amenable subgroups in the isotropy,
  and in which the orbit of any unit in the groupoid contains at least two points.
  Then $\G$ has the intersection property.
  \label{thm:conditionSufficient}
\end{theorem}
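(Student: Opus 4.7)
The plan is to reduce, via Lemma \ref{lem:intersectionPassesFromEnvelope}, to showing that the boundary groupoid $\tilde \G$ has the intersection property. In fact, I will establish the stronger statement that $\tilde \G$ is principal, from which the intersection property follows by the standard argument for (topologically) principal Hausdorff \etale{} groupoids.

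Let $Z \subseteq \tilde X$ denote the set of units with trivial $\tilde \G$-isotropy. Orbits of $\tilde \G$ project under $q$ onto orbits of $\G$ and therefore inherit the property of containing at least two points, so Proposition \ref{prop:clopenIsotropy} applies and $\mathrm{Iso}(\tilde \G)$ is clopen. Hence $\tilde X \setminus Z = r(\mathrm{Iso}(\tilde \G)\setminus \tilde X)$ is open, making $Z$ closed; it is visibly $\G$-invariant since stabilisers are preserved along orbits. If $Z = \tilde X$ we are done, so assume otherwise. The contrapositive of Proposition \ref{prop:equivariantCover} then forces $q(Z) \neq X$; pick $x_0 \in X \setminus q(Z)$ and any $\tilde x_0 \in q^{-1}(x_0)$. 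The stabiliser $\tilde \Lambda \defeq \tilde \G^{\tilde x_0}_{\tilde x_0}$ is non-trivial and, by Proposition \ref{prop:amenableStabilisers}, amenable. Since $Q$ is a bijection on each source fibre, $Q|_{\tilde \Lambda}$ is an injective group homomorphism, and its image $\Lambda \defeq Q(\tilde \Lambda) \leq \G^{x_0}_{x_0}$ is therefore a non-trivial amenable subgroup.

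Now I invoke the hypothesis: $\Lambda$ is not recurrent, so there exists a net $\gamma_n \in \G_{x_0}$ with $r(\gamma_n) \to x_0$ and $\gamma_n \Lambda \gamma_n^{-1} \to \{x_0\}$ in the Chabauty topology on $\mathrm{Sub}(\G)$. Lift each to $\tilde \gamma_n \defeq (\gamma_n, \gamma_n.\tilde x_0) \in \tilde \G$ with range $\tilde y_n \defeq \gamma_n.\tilde x_0$, and pass to a subnet so that $\tilde y_n \to \tilde y$ by compactness of $\tilde X$; continuity of $q$ then gives $q(\tilde y) = x_0$. The stabilisers $\tilde \G^{\tilde y_n}_{\tilde y_n} = \tilde \gamma_n \tilde \Lambda \tilde \gamma_n^{-1}$ project under $Q$ to $\gamma_n \Lambda \gamma_n^{-1}$; since $Q$ is proper (cf.\ the proof of Lemma \ref{lem:reducedAlgebraEmbeds}), any convergent subnet of non-units in $\tilde \G^{\tilde y_n}_{\tilde y_n}$ projects to a convergent subnet of non-units in $\gamma_n \Lambda \gamma_n^{-1}$, which by Chabauty convergence must converge to the unit $x_0$. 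Hence every convergent subnet of non-units drawn from the $\tilde \G^{\tilde y_n}_{\tilde y_n}$ converges to a unit of $\tilde \G$.

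This pairs with the lower semi-continuity of the stabiliser map granted by the clopen isotropy: any non-unit in $\tilde \G^{\tilde y}_{\tilde y}$ would lie in a clopen bisection contained in $\mathrm{Iso}(\tilde \G)$, producing non-unit elements of $\tilde \G^{\tilde y_n}_{\tilde y_n}$ converging to it---a contradiction with the previous step. Thus $\tilde \G^{\tilde y}_{\tilde y} = \{\tilde y\}$, giving $\tilde y \in Z \cap q^{-1}(x_0)$ and contradicting $x_0 \notin q(Z)$. The technically delicate step will be this careful interplay between Chabauty convergence in $\mathrm{Sub}(\G)$ and $\mathrm{Sub}(\tilde \G)$, combining properness of $Q$ (and fibrewise injectivity on isotropy) with the lower semi-continuity supplied by Proposition \ref{prop:clopenIsotropy}.
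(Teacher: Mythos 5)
Your proposal is correct, and it follows the same skeleton as the paper's proof—reduce via Lemma \ref{lem:intersectionPassesFromEnvelope} to principality of $\tilde\G$, using Propositions \ref{prop:amenableStabilisers}, \ref{prop:clopenIsotropy} and \ref{prop:equivariantCover}—but the central step is executed differently. The paper argues globally: it shows the isotropy map $\Phi\colon \tilde X \rightarrow \text{Sub}(\G)$, $\tilde x \mapsto Q(\tilde\G_{\tilde x}^{\tilde x})$, is continuous (using clopen isotropy for the $\mathcal O_U$-preimages and properness of $Q$ for the $\mathcal O'_K$-preimages), so that $\Lambda = \Phi(\tilde X)$ is a closed invariant subset of $\text{Sub}(\G)$; non-recurrence then forces all trivial subgroups into $\Lambda$, giving $q(Z)=X$ directly and $Z=\tilde X$ by Proposition \ref{prop:equivariantCover}. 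You instead run a localized contradiction: assuming $Z \neq \tilde X$ you produce one bad unit $x_0 \notin q(Z)$, apply non-recurrence to the single amenable subgroup $Q(\tilde\G_{\tilde x_0}^{\tilde x_0})$, lift the witnessing net through the action, extract a limit $\tilde y$ by compactness of $\tilde X$, and use only the "one-sided" semicontinuity coming from the clopen isotropy (lifting a putative non-unit of $\tilde\G_{\tilde y}^{\tilde y}$ through a bisection inside $\text{Iso}(\tilde\G)$) against the Chabauty convergence on the $\G$-side. This buys you a somewhat more elementary argument—you never need continuity of the full map $\Phi$ (in particular not the compact-set half of the Chabauty subbasis), only continuity of $Q$ and the fact that units of $\tilde\G$ are exactly the $Q$-preimages of units; indeed your appeal to properness of $Q$ is not really what is used there. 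Two small points you assert without proof but which do hold: the basepoints $r(\gamma_n)$ converge to $x_0$ because they are the units of the conjugated subgroups and $\Gzero$ is compact, so the $\mathcal O'_K$-condition controls them; and the lifted elements through the bisection can be taken to be non-units since $\tilde X$ is closed in the Hausdorff groupoid $\tilde\G$ (though, as your Chabauty argument shows, the limit being a unit already yields the contradiction). The paper's global formulation has the mild advantage of exhibiting the closed invariant family $\Phi(\tilde X)$ explicitly, which is the form in which the hypothesis is most naturally compared with Kawabe's; your version trades that for a shorter, pointwise net argument.
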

\begin{proof}
  Let as before $\tilde \G$ be the boundary groupoid of $\G$, 
  let $Q\colon \tilde \G \rightarrow \G$ be the continuous groupoid homomorphism 
  given by $( \gamma, \tilde x) \mapsto \gamma$,
  and denote the unit spaces of $\G$ and $\tilde \G$ by $X$ and $\tilde X$, respectively.
  Assume that $\G$ has no amenable recurrent subgroups.
  Note that all isotropy groups $\tilde \G_{\tilde x}^{\tilde x}$ of the
  boundary groupoid are amenable by Proposition \ref{prop:amenableStabilisers}, 
  hence so are the corresponding subgroups $Q(\tilde \G_{\tilde x}^{\tilde x})$ of $\G$
  and by assumption none of them is recurrent.
  Let $\Lambda = \left\{ Q(\tilde \G_{\tilde x}^{\tilde x}) \mid \tilde x \in \tilde X \right\}$
  and note that its orbit closure $\overline{\G.\Lambda}$
  contains the trivial subgroups $\left\{ \left\{ x \right\} \mid  x \in X \right\}$
  at every unit of $\G$.
  However, $\Lambda$ is already invariant since
  $\gamma.\tilde \G_{\tilde x}^{\tilde x}
  = \tilde \G_{\gamma.\tilde x}^{\gamma.\tilde x}$
  and is furthermore closed:
  Let $\Phi\colon \tilde X \rightarrow \text{Sub}\left( \G \right)$
  be the isotropy map 
  $\tilde x \mapsto q(\tilde \G_{\tilde x}^{\tilde x})$.
  Then $\Lambda = \Phi(\tilde X)$ is the range of $\Phi$,
  and since $\tilde X$ is compact and $\text{Sub}(\G)$ Hausdorff,
  $\Lambda$ is closed provided that $\Phi$ is continuous.
  To verify this, we calculate
  \begin{align*}
    \Phi^{-1}\left( \mathcal O_U \right)
    &= \left\{ 
      \tilde x \bigm| \tilde \G_{\tilde x}^{\tilde x} \cap Q^{-1}(U) \neq \emptyset 
    \right\}
    &&\text{and}&
    \Phi^{-1}\left( \mathcal O'_K \right)
    &= \left\{ 
      \tilde x \bigm| \tilde \G_{\tilde x}^{\tilde x}\cap Q^{-1}(K) = \emptyset 
    \right\}\\
    &= s\left( \text{Iso}\left( \tilde \G \right) \cap Q^{-1}\left( U \right) \right)
    &&&
    &= \tilde X \setminus s\left( \text{Iso}\left( \tilde \G \right) \cap Q^{-1}(K) \right).
  \end{align*}
  As every orbit in $\G$ has at least two points, 
  $\text{Iso}(\tilde \G)$ is clopen by Proposition \ref{prop:clopenIsotropy},
  and furthermore $s$ is an open surjection
  while $Q^{-1}(U)$ and $Q^{-1}(K)$ are open and closed, respectively.
  Hence, $\Phi$ is continuous
  and $\Lambda = \Phi( \tilde X )$ is closed.
  Now, $\Lambda$ contains $\left\{ \left\{ x \right\} \mid  x \in X \right\}$,
  so for all $x \in X$ there is some $\tilde x \in \tilde X$
  with $q(\tilde x) = x$
  and $\tilde G_{\tilde x}^{\tilde x} = \left\{ \tilde x \right\}$.
  That is, there are enough trivial isotropies in $\tilde \G$
  to cover $X$ along $q$.
  Hence $Z \defeq \{ \tilde x \in \tilde X \mid \tilde \G_{\tilde x}^{\tilde x}
  = \{\tilde x\}\}$
  is a $\G$-invariant subset of $\tilde X$ with $q(Z) = X$.
  Furthermore $Z$ is closed, 
  since its complement
  $\tilde X \setminus Z = s( \text{Iso}( \tilde \G) \setminus \tilde X )$
  is the image of a closed set under the open, surjective map $s$.
  Thus $Z = \tilde X$ by Proposition \ref{prop:equivariantCover}, 
  so $\tilde \G$ is principal and hence has the intersection property.
  By Lemma \ref{lem:intersectionPassesFromEnvelope}
  we conclude that $\G$ inherits the intersection property from $\tilde \G$.
\end{proof}

\begin{remark}
  Note that as for crossed products, 
  the absence of amenable recurrent subgroups implies the intersection property
  not only for $\G$, but for every restriction of $\G$ to a closed invariant
  subset of $\Gzero$.
  In other words, absence of amenable recurrent subgroups implies the 
  \emph{residual intersection property} 
  (see \cite[Def~3.8]{BoenickeLi}).
\end{remark}

We finally apply Theorem \ref{thm:conditionSufficient}
to describe \Cs-simplicity of $\G$.
Recall, for example from \cite[Thm~A]{BoenickeLi},
that a groupoid is \Cs-simple
exactly if it has the intersection property and is minimal,
that is, if every orbit in $\Gzero$ is dense.
While minimality is a straightforward property of the groupoid, 
the intersection property is hard to describe 
without passing to the associated \Csalgebra{}s,
and is therefore in applications often replaced by the stronger 
assumption of topological principality.
See further the works by
Archbold--Spielberg \cite{ArchboldSpielbergTopFree},
Kawamura--Tomiyama \cite{KawamuraTopFree}, 
and Sierakowski \cite{SierakowskiIdeal} for crossed products
and B\"onicke--Li \cite{BoenickeLi} for groupoids.
Note that topological principality,
where a dense set of units in $\Gzero$ has trivial isotropy,
implies not having recurrent amenable subgroups in the isotropy,
since the choices of $\Lambda_x$ must be the trivial subgroup
for a dense set of units,
approximating any other trivial subgroup of the isotropy bundle.

\begin{corollary}
  Let $\G$ be a \emph{minimal} Hausdorff \etale{} groupoid with compact unit space
  that does not have recurrent amenable subgroups in the isotropy.
  Then $\G$ is \Cs{}-simple.
  \label{thm:sufficientForSimplicity}
\end{corollary}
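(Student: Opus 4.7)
The plan is to derive the corollary by chaining Theorem \ref{thm:conditionSufficient} with the general characterisation of \Cs-simplicity for \etale{} groupoids as the conjunction of minimality and the intersection property, cited above from B\"onicke--Li \cite[Thm~A]{BoenickeLi}. Concretely, once the intersection property is in hand, minimality is assumed by hypothesis, and the conclusion is immediate. The only work therefore lies in verifying that the hypotheses of Theorem \ref{thm:conditionSufficient} are met, that is, that every orbit contains at least two points.

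First I would dispose of the degenerate case. If the unit space $\Gzero$ consists of a single point, then $\G$ is a discrete group, and the corollary reduces to Kennedy's characterisation of \Cs-simplicity of discrete groups via the absence of amenable recurrent subgroups \cite{KennedyIntrinsic}; this is subsumed by Proposition \ref{prop:generalisesKawabe} together with the group case of Theorem \ref{thm:conditionSufficient}, or can simply be cited. In fact, for a discrete group the whole machinery of the boundary groupoid collapses to the classical Furstenberg boundary and the argument of Theorem \ref{thm:conditionSufficient} still goes through: the only use of the ``at least two points'' hypothesis in that theorem was to invoke Proposition \ref{prop:clopenIsotropy}, which is automatic for a group since the isotropy bundle is the whole groupoid.

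In the remaining case, where $\Gzero$ contains at least two points, minimality immediately gives that every orbit is dense in $\Gzero$ and hence contains at least two points. Thus the orbit hypothesis of Theorem \ref{thm:conditionSufficient} is satisfied, so $\G$ enjoys the intersection property. Combining the intersection property with minimality, the criterion from \cite[Thm~A]{BoenickeLi} yields that $\Csr(\G)$ is simple, finishing the proof.

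The only potential subtlety, and the main point requiring care, is the bookkeeping around the ``at least two points'' assumption: one must make sure that either the separate case of a single-point unit space is explicitly handled by reducing to Kennedy's group result, or one verifies directly that the group case of Theorem \ref{thm:conditionSufficient} does not actually require the orbit hypothesis. Both approaches are minor, so I expect the corollary to require essentially no new ideas beyond an assembly of previously established results.
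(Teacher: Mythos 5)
Your proposal is correct and takes essentially the same route as the paper: dispose of the case of a one-point unit space by citing Kennedy's characterisation, and otherwise use minimality (with $\Gzero$ having at least two points) to get dense, hence two-point, orbits, apply Theorem \ref{thm:conditionSufficient}, and conclude \Cs{}-simplicity from the intersection property plus minimality via \cite[Thm~A]{BoenickeLi}. One small caveat on your aside: the orbit hypothesis in Theorem \ref{thm:conditionSufficient} feeds into Proposition \ref{prop:clopenIsotropy} applied to the \emph{boundary} groupoid $\tilde\G = \partial_F\Gamma \rtimes \Gamma$, whose isotropy bundle is not the whole groupoid; the reason the hypothesis can be dropped for groups is the trivial existence of full bisections in a crossed product (as remarked after Proposition \ref{prop:clopenIsotropy}), not that $\text{Iso}(\tilde\G)=\tilde\G$ --- but since your primary argument simply cites Kennedy for that case, this does not affect correctness.
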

\begin{proof}
  If $\Gzero$ consists of a single point
  then $\G$ is a discrete group
  and \Cs{}-simple by Kennedy's characterisation \cite[Thm 5.3]{KennedyIntrinsic}.
  If $\Gzero$ has more than one point,
  then every orbit has at least two points since it is dense.
  Therefore, $\G$ has the intersection property by Theorem \ref{thm:conditionSufficient}
  and is \Cs{}-simple since it is also minimal.
\end{proof}

\begin{question}
  Does the converse of Theorem \ref{thm:conditionSufficient} hold?
  That is, is the absence of recurrent amenable subgroups in the isotropy of $\G$
  \emph{equivalent} to the residual intersection property of $\G$?
\end{question}

\bibliographystyle{amsplain}
\bibliography{meta/mathsci,meta/nonmathsci}
\vspace{1cm}
\begin{minipage}[l]{\textwidth}
\noindent Clemens Borys\\
Department of Mathematical Sciences\\
University of Copenhagen\\ 
Universitetsparken 5, DK-2100, Copenhagen\\
Denmark \\
borys@math.ku.dk
\end{minipage}

\end{document}